\numberwithin{equation}{section}
\numberwithin{figure}{section}
\theoremstyle{plain}
\newtheorem{thm}{\protect\theoremname}
\theoremstyle{plain}
\newtheorem{fact}[thm]{\protect\factname}
\newenvironment{lyxlist}[1]
	{\begin{list}{}
		{\settowidth{\labelwidth}{#1}
		 \setlength{\leftmargin}{\labelwidth}
		 \addtolength{\leftmargin}{\labelsep}
		 }}
	{\end{list}}
\theoremstyle{plain}
\newtheorem{cor}[thm]{\protect\corollaryname}
\theoremstyle{plain}
\newtheorem{prop}[thm]{\protect\propositionname}
\theoremstyle{remark}
\newtheorem*{claim*}{\protect\claimname}
\theoremstyle{plain}
\newtheorem{lem}[thm]{\protect\lemmaname}
\theoremstyle{definition}
\newtheorem{defn}[thm]{\protect\definitionname}
\providecommand{\claimname}{Claim}
\providecommand{\corollaryname}{Corollary}
\providecommand{\definitionname}{Definition}
\providecommand{\factname}{Fact}
\providecommand{\lemmaname}{Lemma}
\providecommand{\propositionname}{Proposition}
\providecommand{\theoremname}{Theorem}
\begin{document}
\title{Internal Categoricity and the Generic Multiverse}
\author{Toby Meadows}
\begin{abstract}
John Steel's theory, $MV$, of the generic multiverse provides a foundation
for mathematics that aims to neutralize the effects of incompleteness
brought on by forcing arguments. Jouko V\"{a}\"{a}n\"{a}nen's development of internal
categoricity arguments provides opportunities to argue that the subject
matter of some theory is, in some sense, determined. This paper investigates
whether $MV$ is internally categorical.
\end{abstract}

\thanks{I'm very grateful to Gabe Goldberg for helping with some of the crucial
claims in this paper. I would like to thank Pen Maddy for helpful
conversations around the underlying philosophy. I also want to thank
Joan Bagaria, Jeremy Heis and John Steel. }
\date{August 20, 2025}

\maketitle
It is well known that second order $ZFC$ is almost categorical. Given
a pair of full models of second order $ZFC$, one of them will be
isomorphic to a rank initial segment of the other and the rank of
that initial segment will be an inaccessible cardinal. It is a pleasing
uniqueness theorem that sits naturally beside the existence theorems
delivered by consistency proofs. Moreover, the moths of philosophy
have been drawn to its tantalizing flame since \citet{Zermelo} and
\citet{KreisalComp}. One might hope to use this result to conclusively
demonstrate that a large chunk of the subject matter of set theory,
including the continuum hypothesis, is definitively fixed. These hopes
have largely been dashed on the rocks of circularity, however, more
recently Jouko V\"{a}\"{a}n\"{a}nen has formulated an intriguing notion of \emph{internal
categoricity }that seems to avoid these problems and perhaps deliver
opportunities for more promising philosophical arguments \citep{VaananenTrIntCat,MaddyVaananenCat}.
Internal categoricity will be the focal concept of this paper. 

As to this paper's target, we consider a theory that looks to be motivated
by ideas that oppose the fixity of subject matter suggested by categoricity.
Since the advent of Cohen's forcing technique and the realization
that the continuum hypothesis is independent of $ZFC$, there has
been a recurring thought that at least some of the incompleteness
of $ZFC$ is more of a feature than a bug \citep{MostowskiRRST}.
This motivates the idea that there is a plurality of different universes
wrought through forcing that might all be taken seriously as alternatives
upon a somewhat level playing field. W. Hugh Woodin brought this idea
to life, albeit in order to bury it, along with its provocative name:
the \emph{generic} \emph{multiverse} \citep{WoodinGM,WoodinROI,WoodinSTAR}.
This is a multiverse of universes linked together by sequences of
generic extensions and refinements. More recently, John Steel gave
this idea a more solid foundational footing in the form of a computably
axiomatizable theory, $MV$ \citep{SteelGP}. $MV$ will be the main
target of this paper.

Putting these ideas together, we have an interesting tension. On one
hand, we have an intriguing theorem template that might rigidify the
subject matter of set theory; and on the other, we have a theory that
appears to embrace a radically pluralistic vision of what set theory
is about. To bring this contrast to a point, the categoricity arguments
seem to tell us that the continuum hypothesis has a definitive answer,
while the generic multiverse appears to embrace its indeterminacy.
In spite of all these appearances, this paper has a simple, but --
I think -- surprising goal. We are going to prove that $MV$ is internally
categorical!

Before we get too carried away, I'll say something even stranger and
then I'll try to clarify the underlying picture. So first, We are
also going to prove that $MV$ is \emph{not} internally categorical.
Now it sounds like we have a contradiction, but the devil is in the
details; and in particular, the axioms we use. Just as we have with
$PA$ and $ZFC$, the internal categoricity of $MV$ can be made to
hold or not hold depending on the axioms we use to generate that theory.\footnote{See, for example, Theorems 47 and 52 in \citep{EnayatLelykFOCat}.}
Where does this leave the philosophical preambling above? We'll defer
some discussion of that until then end, but I will say that these
results nicely illustrate that internal categoricity is a subtle matter.
Moreover, this subtlety warrants further investigation and $MV$ presents
an intriguing case study for that project. 

The paper is organized as follows. In Section \ref{sec:What-is-internal},
we provide a brief overview of internal categoricity and some of the
philosophical arguments it is entangled with. Section \ref{sec:What-is-?}
provides an overview of Steel's $MV$ that includes a rough classification
of $MV$'s models. In Section \ref{sec:When--is}, we prove that $MV$
is not internally categorical and then in Section \ref{sec:When--is-1},
we prove that an alternative axiomatization of $MV$ is internally
categorical. The paper then closes in Section \ref{sec:Discussion}
with some discussion of these results.

\section{What is internal categoricity?\label{sec:What-is-internal}}

We know from Dedekind that second order $PA$ is categorical and from
Zermelo and Shepherdson that second order $ZFC$ is quasi-categorical.
More formally, 
\begin{thm}
\label{thm:DedZerm}(1) \citep{Dedekind} If $\mathcal{M}$ and $\mathcal{N}$
are full models of second order $PA$, then $\mathcal{M}\cong\mathcal{N}$.

(2) \citep{Zermelo,ShepInI} If $\mathcal{M}$ and $\mathcal{N}$
are full models of second order $ZFC$, then either: $\mathcal{M}$
is isomorphic to $V_{\kappa}^{\mathcal{N}}$ where $\kappa$ is inaccessible
in $\mathcal{N}$; or $\mathcal{N}$ is isomorphic to $V_{\lambda}^{\mathcal{M}}$
where $\lambda$ is inaccessible in $\mathcal{M}$. 
\end{thm}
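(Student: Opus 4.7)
For part (1), the plan is to define a map $f\colon M\to N$ by primitive recursion, setting $f(0^{\mathcal{M}})=0^{\mathcal{N}}$ and $f(S^{\mathcal{M}}(x))=S^{\mathcal{N}}(f(x))$. The existence of such a function follows from the second-order recursion theorem, since the full power set of $M\times N$ is available as a second-order domain. To verify that $f$ is an isomorphism I would then invoke second-order induction in both models. Induction in $\mathcal{M}$, applied to the set $\{x\in M : \forall y\,(f(x)=f(y)\to x=y)\}$, yields injectivity using the Peano axioms about $S$ and $0$; induction in $\mathcal{N}$, applied to the range of $f$, yields surjectivity because $\operatorname{ran}(f)$ contains $0^{\mathcal{N}}$ and is closed under $S^{\mathcal{N}}$. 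Preservation of $0$ and $S$ is built into the definition.

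For part (2), the approach is to attempt a simultaneous transfinite comparison of the rank hierarchies of $\mathcal{M}$ and $\mathcal{N}$. Second-order Separation and Replacement force the rank function to be well-defined and well-behaved in each full model, so we may define by recursion on the ordinals a family of partial isomorphisms $\pi_{\alpha}\colon V_{\alpha}^{\mathcal{M}}\to V_{\alpha}^{\mathcal{N}}$, with $\pi_{\alpha+1}(x)=\{\pi_{\alpha}(y):y\in x\}$ at successors and unions at limits. Because both models carry the genuine power set operation in the full semantics, each $\pi_{\alpha+1}$ is in fact a bijection of the true power set of $V_{\alpha}^{\mathcal{M}}$ with that of $V_{\alpha}^{\mathcal{N}}$, identified via $\pi_\alpha$. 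The recursion either exhausts the ordinals of both models simultaneously, giving $\mathcal{M}\cong\mathcal{N}$, or it terminates at some stage $\kappa$ that is the height of one model and a genuine ordinal of the other. In the latter case we obtain $\mathcal{M}\cong V_{\kappa}^{\mathcal{N}}$ (or symmetrically).

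To finish, I would argue that such a boundary $\kappa$ must be inaccessible in the larger model. Regularity follows from second-order Replacement in $\mathcal{M}$: any cofinal map into $\kappa$ would be a class definable from parameters in $\mathcal{M}\cong V_\kappa^{\mathcal{N}}$, contradicting Replacement. The strong limit property follows from second-order Power Set together with fullness: for $\lambda<\kappa$, the true power set of a set of rank $\lambda$ sits inside $V_\kappa^{\mathcal{N}}$, so $2^\lambda<\kappa$. Together with $\kappa$ being a limit ordinal above $\omega$, this gives inaccessibility.

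The main obstacle is the successor step of the recursion in part (2): one must be careful that \emph{fullness} of the second-order semantics really does match up the two power sets along $\pi_\alpha$. This is where the argument genuinely uses full (rather than Henkin) semantics, and it is also the point at which the analogous \emph{internal} categoricity arguments to be considered later in the paper require much more delicate treatment, since no external power set is available there.
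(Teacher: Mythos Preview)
The paper does not give its own proof of this theorem; it is stated as a classical result with citations to Dedekind, Zermelo, and Shepherdson, and the surrounding text is entirely expository and philosophical. So there is nothing to compare against on the paper's side.

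Your sketch is the standard textbook argument for both parts and is correct in outline. One small wording issue in part~(2): in the regularity step you say the cofinal map ``would be a class definable from parameters in $\mathcal{M}$.'' Definability is not what is doing the work here --- the map need not be definable in $\mathcal{M}$ at all. The point is rather that, by \emph{fullness}, every subclass of $M$ (in particular the graph of the cofinal map pulled back along the isomorphism) lies in $\mathcal{M}$'s second-order domain, so full second-order Replacement applies to it directly and forces its range to be a set, which is impossible. You clearly have this picture in mind, since you flag exactly this use of fullness as the crux at the end of your proposal; just make sure the write-up says ``arbitrary class'' rather than ``definable class'' at that step, as the distinction is precisely what separates the external argument here from the internal arguments the paper goes on to study.
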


With regard to (1), we have an intriguing uniqueness result. To put
some context around this, let's note that for most us, it's obviously
desirable that, despite the weaknesses of first order logic in pinning
down structures, there should be just one \emph{intended} model of
arithmetic (up to isomorphism).\footnote{As we progress further in this paper, I'll frequently omit the phrase
``up to isomorphism'' in the hopes that this will keep the words
flowing smoothly and that the reader will charitably discern my true
intentions. } It seems natural then to wonder if Dedekind's result \emph{provides}
\emph{evidence} that this is the case. There is a lot of philosophy
to unpack in such a conjecture, but fortunately this has been thoroughly
discussed before \citep{KreisalComp,ParsonNatNum,ButtonWalshCat,MaddyVaananenCat}.
As such, we'll satisfy ourselves by briefly considering just one compelling
objection, which I'll motivate with a question: ``If I were worried
about whether there was a unique intended model of arithmetic, then
surely I should be even more worried about full second order models
of $PA$ which presuppose that it makes sense to talk about the unique
powerset of the natural numbers?'' It has been suggested many times
that the conjecture above makes use of a kind of circular reasoning
\citep{WestCont,HamMult,MeadowsCat,KoellnerHAM,MaddyVaananenCat}.
In order justify our thought that there is a unique model of arithmetic,
we appeal to the uniqueness of a much more complex structure. As such,
the idea that Dedekind's result provides evidence for our intuitions
about arithmetic, looks to be on shaky ground. None of this is to
say that there isn't a unique model of arithmetic, just that Dedekind's
result doesn't provide compelling evidence for that claim. 

With regard to (2), we get closer to the target of this paper and
analogous remarks apply. Of course, we don't always get a genuine
isomorphism between full models of second order $ZFC$. So there is
less temptation to think we have evidence that the subject matter
of set theory has been pinned down. But any pair of such models still
have a great deal in common. In particular and saliently, any two
full models of second order $ZFC$ must agree on their evaluation
of the continuum hypothesis. Thus, we might be tempted to think that
Zermelo's result provide us with some evidence that the continuum
hypothesis has a definitive answer. As with arguments making use of
Dedekind's theorem, worries about circularity also emerge in this
arena. Once again, the problem comes from the use of full second order
models and their free wheeling use of the powerset operation. 

Much more recently, V\"{a}\"{a}n\"{a}nen has offered a new perspective on categoricity
theorems that makes no use of second order logic and thus, defuses
the circularity worries discussed above. He calls this \emph{internal
categoricity} \citep{VaananenTrIntCat,VaanZerm}.\footnote{Strictly speaking, V\"{a}\"{a}n\"{a}nen considers two kinds of internal categoricity.
The first is still rooted in second order logic, although uses Henkin
models instead of full models of second order logic. The second only
uses first order logic and will be our focus in this paper. This is
also the usage taken up in \citep{EnayatLelykFOCat}.} We start by considering an expansion of the ordinary language of
set theory where we let $\mathcal{L}(\in_{0},\in_{1})$ be the language
with a pair of 2-place relation symbols. We then let $ZFC(\in_{0},\in_{1})$
be the theory that has: an $\in_{0}$ and $\in_{1}$ version of every
axiom of $ZFC$; and has $\in_{0}$ and $\in_{1}$ versions of the
Replacement and Separation Schemata where the formulae used may make
use of both $\in_{0}$ and $\in_{1}$.\footnote{My notation is cosmetically different from that of \citet{VaananenTrIntCat}.
I've done this so that I can notate things a little more compactly
on the page, but do refer to \citet{VaananenTrIntCat,VaanZerm,MaddyVaananenCat}
for the official notation. Moreover, \citet{EnayatLelykFOCat} provide
an excellent notation system for generalizing these ideas to other
theories. Given that our focus is quite narrow in this paper, we'll
avoid this and just remind the reader of our meaning when required. } We then obtain the following. 
\begin{thm}
\citep{VaanZerm} If $\mathcal{M}=\langle M,\in_{0}^{\mathcal{M}},\in_{1}^{\mathcal{M}}\rangle$
is a model of $ZFC(\in_{0},\in_{1})$, then there is an isomorphism
\[
\sigma:\langle M,\in_{0}^{\mathcal{M}}\rangle\cong\langle M,\in_{1}^{\mathcal{M}}\rangle
\]
where $\sigma$ is uniformly definable over $ZFC(\in_{0},\in_{1})$. 
\end{thm}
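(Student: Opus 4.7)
The plan is to define the isomorphism $\sigma$ by recursion along $\in_0$, mimicking the Mostowski collapse but with the target structure being $\langle M, \in_1^{\mathcal{M}}\rangle$ rather than a genuine well-founded set. Explicitly, I would aim to produce a formula $\varphi(x,y)$ in $\mathcal{L}(\in_0, \in_1)$ such that $ZFC(\in_0,\in_1)$ proves that $\varphi$ defines a function $\sigma$ satisfying
\[
\sigma(x) \;=\; \bigl\{\sigma(y) : y \in_0 x\bigr\}^{\in_1},
\]
where the right-hand side denotes the unique $\in_1$-set whose $\in_1$-members are exactly the listed objects. The crucial observation is that the mixed Replacement and Separation schemata in $ZFC(\in_0,\in_1)$ allow formulas involving both relations, which is precisely what is needed to transfer $\in_0$-data into $\in_1$-sets and vice versa.

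First I would verify that the recursion is legitimate. Working by $\in_0$-induction (legal because Foundation holds for $\in_0$), suppose $\sigma$ has been defined on every $y \in_0 x$. The collection $\{y : y \in_0 x\}$ is an $\in_0$-set (namely $x$ itself), and by $\in_1$-Replacement applied to the mixed formula that defines $\sigma$, the image $\{\sigma(y) : y \in_0 x\}$ is contained in some $\in_1$-set. Mixed $\in_1$-Separation then carves out the exact image, and $\in_1$-Extensionality together with the existence of the $\in_1$-set of this extension (via Pairing, Union, and Separation in $\in_1$) yields a unique value for $\sigma(x)$. This establishes that $\sigma$ is a totally defined, uniformly definable class function.

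Next I would establish bijectivity. Injectivity is proved by $\in_0$-induction: if $\sigma(x) = \sigma(x')$, then the $\in_1$-extensions agree, so for every $y \in_0 x$ there is $y' \in_0 x'$ with $\sigma(y) = \sigma(y')$ and hence, by the inductive hypothesis, $y = y'$; symmetrically in the other direction, so by $\in_0$-Extensionality $x = x'$. Surjectivity is the dual, and it is here that the mixed schemata are again essential: by $\in_1$-induction on $z$, assume every $w \in_1 z$ equals $\sigma(y_w)$ for a unique $y_w$; using $\in_0$-Replacement on a formula that mentions $\in_1$, collect the $y_w$ into an $\in_0$-set $x$, and verify $\sigma(x) = z$. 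Finally, the isomorphism property $y \in_0 x \Leftrightarrow \sigma(y) \in_1 \sigma(x)$ follows from the defining equation and injectivity.

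The main obstacle is ensuring that the recursion actually goes through inside a single model, rather than appealing to a meta-theoretic recursion theorem. One has to be careful that the definition of $\sigma$ can be written as a single formula in $\mathcal{L}(\in_0,\in_1)$ whose totality and functionality are provable in $ZFC(\in_0,\in_1)$; the standard trick is to replace the recursion by the usual ``approximation'' formulation, saying that $\sigma(x) = z$ iff there exists a function $f$ with $\in_0$-transitive $\in_0$-domain containing $x$ such that $f(u) = \{f(v) : v \in_0 u\}^{\in_1}$ for all $u$ in its $\in_0$-domain and $f(x) = z$. The existence and uniqueness of such approximations is the usual $\in$-recursion argument, but now carried out with mixed Replacement, which is exactly the hypothesis we are given.
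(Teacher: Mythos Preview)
The paper does not itself prove this theorem; it is cited to V\"{a}\"{a}n\"{a}nen. The paper does, however, prove the analogous result for $ZFC^-_{count}(\in_0,\in_1)$, using the same Mostowski-collapse strategy you outline (collapse each membership relation inside the other, then argue the collapses are total bijections), so your overall plan is the right one and aligns with both V\"{a}\"{a}n\"{a}nen's argument and the paper's related proof.

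There is, however, a genuine gap. When you write ``by $\in_1$-Replacement applied to the mixed formula that defines $\sigma$, the image $\{\sigma(y) : y \in_0 x\}$ is contained in some $\in_1$-set,'' you are applying $\in_1$-Replacement with domain $\{y : y \in_0 x\}$. But $\in_1$-Replacement, even with mixed formulae permitted in the defining condition, still quantifies its bound variable via $\in_1$: the schema asserts that the image of an $\in_1$-\emph{set} under a class function is an $\in_1$-set. You have only identified the domain as an $\in_0$-set, and nothing you have established guarantees that the $\in_0$-extension of $x$ coincides with the $\in_1$-extension of any element of $M$. The same mismatch recurs in your surjectivity step, where you propose to apply $\in_0$-Replacement with domain $\{w : w \in_1 z\}$. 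Your approximation formulation at the end does not escape the problem either: to glue the approximations $f_y$ for $y \in_0 x$ into a single $\in_1$-set you again need $\{f_y : y \in_0 x\}$ to be an $\in_1$-set, which is exactly the missing fact.

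The gap is repairable, but the repair is where Powerset enters essentially and is not a triviality. One proves by $\in_0$-induction on ordinals $\alpha$ the stronger statement that $\sigma \restriction V_\alpha^{\in_0}$ is an $\in_1$-set whose $\in_1$-range is exactly some $V_{\sigma(\alpha)}^{\in_1}$. At successor stages, $\in_1$-Powerset supplies the $\in_1$-set $\mathcal{P}^{\in_1}(\text{ran}^{\in_1}(f_\beta))$ from which $\in_1$-Replacement can legitimately be run (and mixed $\in_0$-Separation on the $\in_0$-set $V_\beta^{\in_0}$ recovers the preimages, giving surjectivity onto that powerset). At limit stages one first shows the ranges are bounded in $Ord^{\in_1}$: otherwise $\sigma$ would surject the $\in_0$-set $V_\alpha^{\in_0}$ onto all of $M$, and mixed $\in_0$-Replacement would yield an $\in_0$-set whose $\in_0$-extension is $M$ itself, contradicting $\in_0$-Foundation. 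Once boundedness is in hand, the partial collapses can be re-indexed by $\in_1$-ordinals below the bound and assembled via $\in_1$-Replacement and $\in_1$-Union.
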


Some remarks about the relationship with Zermelo's theorem are warranted.
With Zermelo, we learn something about a pair of models, while with
V\"{a}\"{a}n\"{a}nen we are just considering a single model. The interesting thing
is that this model is -- in a sense -- two models in disguise. Moreover,
those two models are deeply entangled through the mashup theory $ZFC(\in_{0},\in_{1})$.
In particular, they are allowed to make us of each other's languages
in their respective Replacement and Separation Schemata. Thus, and
helpfully for the proof of the theorem above, each model can deliver
the transitive collapse of the other.

But why should we allow this sharing of vocabulary between their Replacement
and Separation Schemata? From a mathematical point of view, there
is nothing stopping us. But from a more philosophical perspective,
there is an intriguing argument in support of this move. If we consider
the intention behind our use of a schema to articulate the idea behind
Replacement, we might say that it is intended to be a first order
approximation of the second order idea that the pointwise image of
\emph{any} class function through a set should itself be a set of
outputs.\footnote{See \citet{ParsonNatNum} and \citet{FieldDeflCons} for thorough
arguments for this attitude toward schemata in the context of arithmetic.
See Section 4.3 of \citep{MaddyVaananenCat} for a nuanced discussion
of the relationship Parson's thought experiment and internal categoricity.
One might worry that this talk of approximating a second order concept
with a schema doesn't take us far enough away from second order logic.
See page 30 of \citep{MaddyVaananenCat} for further discussion of
this point. We shall raise this again in the Discussion section.} Thus, if we expand our language in such a way that could allow us
to define new class functions, these class functions ought to also
be subject to the closure conditions required by Replacement.

What should we make of this? To motivate the milieu behind such results,
we might consider a thought experiment based on ideas from interpretability
that is similar in spirit if not letter to \citet{ParsonNatNum}.
Suppose that you use a theory $DLO(<)$ of dense linear orders that
is based on a strict $<$ relation, and I use a theory $DLO(\leq)$
based on non-strict relation $\leq$. There is, of course, no substantive
debate to be had between us. But how might we demonstrate this? Given
that you are working in $\mathcal{L}(<)$, you might move to an expansion
$\mathcal{L}(<,\leq)$ that adds my non-strict $\leq$. You might
then accommodate my thoughts about the $\leq$ by extending your theory
to be $DLO(<)\cup DLO(\leq)$. This puts us in a position to model
how you might think about my $\leq$ in terms of your $<$. At this
point, we know nothing about that, but you might be suddenly struck
by inspiration and observe that you can add an axiom to $DLO(<)\cup DLO(\leq)$
that defines $\leq$ in terms of $<$ by saying that $x\leq y$ iff
$x<y$ or $x=y$. You might then prove that the resulting theory remains
consistent. Moreover, I might then do the analogous thing but in the
other direction. If we both succeed then it can be seen that our initial
theories were very closely related; in particular $DLO(<)$ and $DLO(\leq)$
are definitionally equivalent.\footnote{See \citep{LefeverDefEqNonDisjLang} for an excellent overview of
different characterizations of definitional equivalence.} This is a simple illustration of a pivotal concept based on the theory
of interpretability. Results like this are a cornerstone of contemporary
set theory.

I think there is a analogy to be had between the example above and
a way of applying V\"{a}\"{a}n\"{a}nen's theorem. Suppose you and I are both users
of $ZFC$. In this situation, we have no worries about whether our
theories are definitionally equivalent since they are identical. However,
we might wonder if your version of $\in$ has the same meaning as
mine. To explore this question, we might tag our membership relations
to differentiate them and thus, use $\in_{0}$ and $\in_{1}$. We
then might say that you are working $ZFC(\in_{0})$ articulated in
$\mathcal{L}(\in_{0})$ and I am working in $ZFC(\in_{1})$ articulated
in $\mathcal{L}(\in_{1})$. In order to explore the relationship between
our theories you might then expand your language from $\mathcal{L}(\in_{0})$
to $\mathcal{L}(\in_{0},\in_{1})$ and accommodate my ideas about
set theory by working in the theory $ZFC(\in_{0})\cup ZFC(\in_{1})$.
This puts us in a good position to model how you might think about
my $\in_{1}$ in terms of your $\in_{0}$. But at this point, our
setup has nothing to say on this matter. Then, as before, you might
be struck by inspiration and adopt a \emph{Parsonian} attitude to
the Replacement and Separation Schemata by permitting formulae of
the total language in both versions of those schemata. This leaves
us in $ZFC(\in_{0},\in_{1})$ where we may finally use V\"{a}\"{a}n\"{a}nen's
theorem to define an isomorphism between my version of the membership
relation and yours. This might give you some reason to think that
what I mean by $\in_{1}$ is very closely related to you mean by $\in_{0}$.
Moreover, it might give you some reason to think that whatever you
think about the truth value of the continuum hypothesis, I should
think the same. The analogy with the dense linear orders is certainly
not perfect, but I think it nicely illustrates the fact that we are
dealing with a common or, at least, overlapping methodology. If we
want to compare a pair of theories, then it makes good sense to consider
ways of combining them. Finally, we note that second order logic has
disappeared from view and so the circularity worries we had above
no longer have a target. Putting all this together, it could seem
that we have good reason to think that you and I should agree on the
truth value of the continuum hypothesis using a methodology common
in set theory that does not fall prey to worries about circularity. 

I don't plan to precisely formulate arguments about the speculative
reasoning above. Nor do I plan to evaluate their force. I have two
reasons for this. First, I think there is value in merely illustrating
the intuitive pull that surrounds V\"{a}\"{a}n\"{a}nen's theorem in an effort
to flag it for more intense philosophical scrutiny. Second, I think
the results that follow below place a sufficiently large question
mark over such argument sketches that the effort required to understand
what is going on will be substantial and well beyond the scope of
this paper.

\section{What is $MV$?\label{sec:What-is-?}}

The incompleteness of $ZFC$ as evidenced by forcing arguments is
sometimes thought of as particularly deep. By contrast, shallow forms
of incompleteness can be resolved by adopting a stronger theory that
we think is true. For example, although $ZFC$ cannot prove that it
is consistent, if we adopt an axiom saying that there are inaccessible
cardinals then the consistency of $ZFC$ follows almost immediately.
But over at the deep end, the continuum hypothesis can be both forced
to hold and forced to fail. Moreover, no large cardinal axiom can
come to the rescue and decide the matter. The generic multiverse was
introduced as a means of taking such deep incompleteness seriously
as something we might just have to live with. In its simplest form,
a generic multiverse $\mathbb{V}_{M}$ is formed by taking a countable
transitive model $M$ of $ZFC$ and then considering all the models
that can be obtained from $M$ by finite sequences of generic extensions
and refinements \citep{WoodinGM}. We might then live with the incompleteness
of $ZFC$ by only taking seriously those statements that are true
in every world $N$ in $\mathbb{V}_{M}$. In particular, the continuum
hypothesis is relegated to the liminal space between always true and
always false. Woodin describes the motivation behind the generic multiverse
particularly clearly:
\begin{quote}
Is the generic-multiverse position a reasonable one? The refinements
of Cohen's method of \emph{forcing} in the decades since his initial
discovery of the method and the resulting plethora of problems shown
to be unsolvable, have in a practical sense almost compelled one to
adopt the generic-multiverse position.\footnote{I should mention at this point that although Woodin clearly describes
why embracing the generic multiverse might be attractive and understandable,
his goal in that paper is to argue against its adoption.} \citep{WoodinGM}
\end{quote}
If our goal is to take deep incompleteness seriously, then Woodin's
$\mathbb{V}_{M}$ has some limitations. $\mathbb{V}_{M}$ is a model
that we describe, so to speak, from the outside rather than foundational
framework like $ZFC$ than we can use, so to speak, from the inside.
It's still a valuable tool for understanding how the generic multiverse
position works, but it doesn't offer us a way of truly inhabiting
its multiverse. To address this, Steel introduced a computably axiomatizable
theory of the generic multiverse that is designed to be an autonomous
foundation for mathematics, which is on a par with $ZFC$, and within
which we can live with deep incompleteness \citep{SteelGP}.

Steel's theory has received a healthy dose of high quality philosophical
attention since its inception \citep{MeadMadPGMV,Meadows2Args,BAGARIA_TERNULLO_2023,BLUE_2024}.
My goals in this section and the rest of this paper are more logical
in spirit. I want to give an overview of: what $MV$ looks like from
the outside, through a rough classification of its models; and what
$MV$ feels like on the inside, by highlighting some significant consequences
of the theory. My thought is that by delivering a more comprehensive
overview of $MV$, the internal categoricity results below will make
more sense to the reader and perhaps over-optimistically that these
results might make $MV$ more accessible to a wider class of readers. 

\subsection{Notation}

Before we launch into the theory, let's first make some remarks about
notation. In general, I am aiming to follow the notation conventions
used by contemporary set theorists as used in, for example \citep{JechST,KunenST2}
and \citep{foreman2009handbook}. However, I'll highlight a few local
idiosyncrasies that I've taken up in order to make things look a little
smoother on the page. 

First and contrary to the usual conventions in mathematical logic,
a theory will just be a set of sentences that we usually call axioms.
We shall not demand, unless stated otherwise, that a theory is closed
under logical consequence. Given that internal categoricity is sensitive
to the axioms used to generate a theory, this is particularly important.\footnote{We should note that this approach to theories is quite common in the
theory of interpretability. See, for example, \citep{lindstrm2003aspects}
and \citep{EnayatLelykFOCat}.}

We shall, below, be frequently concerned with models of $\mathcal{L}_{\in}=\{\in\}$.
Most often these will be transitive models and for those, we adopt
the usual convention of denoting the model $\langle M,\in\rangle$
by its domain $M$. However, when we come to denote models that have
a non-standard membership relation we shall use the calligraphic font
and write $\mathcal{M}=\langle M,\in_{\mathcal{M}}\rangle$.

In almost all of the many simple forcing arguments that follow we
shall make great use of L\'{e}vy's collapse forcing. Recall that $Col(\omega,X)$
for $X\subseteq Ord$ is the partial order whose conditions are finite
partial functions $p:\omega\times X\rightharpoondown X$ where for
all $n\in\omega$ and $\alpha\in X$, $p(n,\alpha)<\alpha$ if it
is defined. We then order $Col(\omega,X)$ be reverse inclusion. Rather
than write $Col(\omega,X)$ again and again, we shall instead write
$\mathbb{C}_{X}$ where the $\mathbb{C}$ is intended to be mnemonic
for \uline{c}ollapse. We shall also use interval notation for sets
of ordinals so, for example $[\alpha,\beta)=\{\gamma\in\beta\ |\ \gamma\geq\alpha\}$.
If $H$ is $\mathbb{C}_{\beta}$-generic over $V$ for some $\beta\in Ord\cup\{Ord\}$
and $\alpha<\beta$, we write $H\restriction\alpha$ to denote $\{p\in\mathbb{C}_{\alpha}\ |\ p\in H\}$. 

The following theorem of Laver and Woodin plays a pivotal role in
the theory of the generic multiverse. We shall also use the terms
``generic refinement'' and ``ground'' as synonyms below. $U$
is a generic refinement of $W$ if there is some $U$-generic $G$
such that $W=U[G]$.
\begin{fact}
(Laver, Woodin)\label{fact:Laver,-Woodin} Every generic refinement
$U$ of $V$ is uniformly definable in $V$ using a parameter from
$U$. That is, there is a formula $\psi(x,y)$ of $\mathcal{L}_{\in}$
such that if $V=U[G]$ is a forcing extension of $U$ by a $U$-generic
filter $G\subseteq\mathbb{P}\in U$, then there is some $r\in U$
such that\footnote{See \citep{ReitzTGA} for a gentle proof and \citep{WoodinGM} for
something more general. } 
\[
U=\{x\ |\ \psi(x,r)\}.
\]
\end{fact}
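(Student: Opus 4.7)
The plan is to prove this via Hamkins' machinery of the $\delta$-approximation and $\delta$-cover properties. Fix $V = U[G]$ with $G$ being $U$-generic for $\mathbb{P} \in U$, and let $\delta = (|\mathbb{P}|^+)^U$, so that $\mathbb{P}$ is $\delta$-c.c.\ in $U$. The parameter $r$ will be a code for the triple $(\delta, \mathbb{P}, V_\delta^U)$, and the goal is to show that $U$ is uniformly recoverable from $r$ in $V$.

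The first step is to verify two properties of the pair $(U, V)$. The $\delta$-cover property says every $X \in V$ with $|X|^V < \delta$ is contained in some $Y \in U$ with $|Y|^U < \delta$; the $\delta$-approximation property says that if $X \in V$, $X \subseteq U$, and $X \cap Y \in U$ for every $Y \in U$ with $|Y|^U < \delta$, then $X \in U$. Both follow by standard nice-name counting: a $\mathbb{P}$-name for $X$ has size $< \delta$ by $\delta$-c.c., so the set of its possible realizations lives in $U$ and has $U$-size $< \delta$, which gives the cover; approximation is an analogous argument on nice names, where $\delta$-c.c.\ lets one assemble a canonical name for $X$ from its small approximations.

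The second step is the uniqueness lemma: any transitive class $M \models ZFC$ inside $V$ that has the $\delta$-cover and $\delta$-approximation properties in $V$ and satisfies $V_\delta^M = V_\delta^U$ must equal $U$. The proof is by induction on rank above $\delta$. If $M$ and $U$ agree on $V_\alpha$, then a set $x \in V_{\alpha+1}^U$ is a subset of $V_\alpha^U = V_\alpha^M$, and by $\delta$-approximation in $U$ it is determined by its intersections with the $Y \in U$ of $U$-size $< \delta$, all of which lie in $V_\alpha^U = V_\alpha^M \subseteq M$; by $\delta$-approximation in $M$, these same intersections force $x \in M$. Symmetry gives $M \cap V_{\alpha+1} = U \cap V_{\alpha+1}$.

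The third step is to render this as a single first-order formula $\psi(x,r)$; this is where the main obstacle lies, because uniqueness as stated quantifies over proper-class inner models. The fix is to reflect: $\psi(x,r)$ asserts that for some ordinal $\alpha$ exceeding $\mathrm{rank}(x)$, there is a transitive set $N \in V$ containing $r$, $x$, and $V_\alpha$, which models a large enough finite fragment of $ZFC$, such that inside $N$ there is a transitive sub-model $W$ containing $r$ with $V_\delta^W = V_\delta^{(N)}$-part-of-$r$, having the $\delta$-approximation and $\delta$-cover properties relative to $N$, and with $x \in W$. By the uniqueness lemma applied at the reflected level, the $W$ picked out inside any sufficiently large such $N$ coincides with $U \cap N$, and so the formula defines $U$ on the nose. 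Delicacy is required in choosing the fragment $T$ so that the approximation/cover properties are correctly expressible and so that reflection transfers the uniqueness argument from $V$ down to $N$; this is the only step that is not essentially mechanical.
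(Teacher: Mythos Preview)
The paper does not give its own proof of this fact; it is stated as a cited result, with a footnote pointing to \citet{ReitzTGA} for a ``gentle proof'' and to \citet{WoodinGM} for a more general version. So there is no proof in the paper to compare against. That said, your outline is precisely the Hamkins approximation-and-cover approach that Reitz's paper presents, so in that sense you are aligned with the reference the paper cites.

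Two remarks on the content of your sketch. First, the $\delta$-approximation property is not ``an analogous argument on nice names'' in the same sense as the cover property; it is genuinely more delicate, and the standard proof proceeds by a contradiction argument about fresh sets rather than by assembling a name directly. Second, your induction in Step~2 has a gap: to pass from ``$x \cap Y \in U$ for all $U$-small $Y$'' to ``$x \cap Y \in M$ for all $M$-small $Y$'' you need to know that $U$-smallness and $M$-smallness coincide, and that the relevant $Y$'s lie in the part of the hierarchy you have already matched up. This is exactly where the \emph{cover} properties of both $M$ and $U$ in $V$ are used, together with an argument that $M$ and $U$ compute the same successor cardinals above $\delta$; your sketch uses only approximation and omits this bridging step. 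These gaps are fillable along the lines of Hamkins' original argument, but as written your Step~2 would not go through.
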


Let us fix a formula $\psi(y,x)$ as in the statement of the fact
above. We shall write $x\in\mathsf{W}_{r}$ to mean that $\psi(x,r)$
holds. This is usually just written as $W_{r}$ rather than $\mathsf{W}_{r}$.
However, when talking about the generic multiverse and using the theory,
$MV$, upper case variables like $W$ are in high demand. Thus, we
use the sans serif font $\mathsf{W}$ to avoid confusion. 

\subsection{The $MV$ theory\label{subsec:The--theory}}

We are now ready to describe $MV$. The theory is articulated in the
language $\mathcal{L}_{MV}$ which is a two sorted language extending
$\mathcal{L}_{\in}$ with a new sort for \emph{worlds} beyond the
existing sort for \emph{sets}. Sets can be members of both sets and
worlds, but worlds can be members of neither sets nor worlds.\footnote{Strictly we should have two membership relations: one for when sets
are members of sets; and the other for when sets are members of worlds.
However, using just one symbol lightens our notation load in a helpful
way so we adopt this abuse of notation. } The axioms of $MV$ can then be stated somewhat formally as follows.
We shall write $gen(\mathbb{P},W)$ to denote the set of $\mathbb{P}$-generic
filters of a world $W$. 
\begin{lyxlist}{00.00.0000}
\item [{(World-extensionality)}] $\forall x(x\in W\leftrightarrow x\in U)\to W=U$
\item [{(World-transitivity)}] $x\in W\to x\subseteq W$
\item [{(World-ordinals)}] $\alpha\in Ord^{W}\leftrightarrow\alpha\in Ord^{U}$
\item [{($ZFC$-worlds)}] $\varphi^{W}$ where $\varphi$ is an axiom of
$ZFC$. 
\item [{(Set-capture)}] $\exists W\ x\in V$
\item [{(Extension)}] $\mathbb{P}\in W\exists U\exists g\in U\ (g\in gen(\mathbb{P},W)\wedge U=W[g])^{U}$
\item [{(Refinement)}] $r\in W\exists U\ (U=(\mathsf{W}_{r})^{W}$
\item [{(Amalgamation)}] 
\begin{align*}
 & \exists\mathbb{P}_{0}\in W_{0}\exists\mathbb{P}_{1}\in W_{1}\exists U\exists g_{0},g_{1}\in U\ \\
 & (g_{0}\in gen(\mathbb{P}_{0},W_{0})\wedge g_{1}\in gen(\mathbb{P}_{1},W_{1})\wedge\\
 & W_{0}[g_{0}]=U=W_{1}[g_{1}])^{U}.
\end{align*}
\end{lyxlist}
The first five axioms essentially say that worlds are inner models
of $ZFC$ and that every set $x$ is captured by a world $W$ in the
sense that $x\in W$. The final three axioms breathe life into the
\emph{generic} aspect of this multiverse. Extension tells us that
any generic extension of a world is itself a world. Refinement tells
us that any generic refinement of a world is a world. Amalgamation
tells us that any pair of worlds have a common generic extension. 

Extension and Refinement are straightforwardly motivated. They tell
us that the multiverse is closed under forcing and its inverse. Amalgamation
is a little more complicated.\footnote{See \citet{MeadMadPGMV} for some extensive discussion of this. }
One quite simple-minded motivation for including it is that it ensures
that our multiverse doesn't contain, so to speak, isolated galaxies
since any pair of worlds are subsumed into a larger world. However,
we shall see very soon that Amalgamation also allows us to obtain
particularly elegant models of $MV$ that have a structure which is
very familiar to set theorists. 

It's easy enough to offer these informal explanations but they only
get us so far. In the next section, we'll start thinking more seriously
about models of $MV$, but before we do this, let's ask a natural
question about the consequences of this theory: 
\begin{quote}
Given that $MV$ is a theory of sets and worlds, we can consider its
reduction to the set domain, which gives us a theory in $\mathcal{L}_{\in}$.
What is that theory like?
\end{quote}
It turns out that it has a very nice theory indeed. Informally, we
might say that it satisfies countable set theory. More precisely,
let $ZFC^{-}$ be $ZFC$ articulated using the Collection schema (rather
than Replacement) and a Set Induction schema (rather than Foundation)
where we remove the Powerset axiom.\footnote{Nothing important swings on using Set Induction rather than Foundation.
Like Collection, it just behaves better in weakenings of $ZFC$.} Let $ZFC_{count}^{-}$ be $ZFC^{-}$ with the following additional
axiom: $\forall x\ |x|\leq\omega$. This is the natural theory of
the hereditarily countable sets and it is implied by $MV$.\footnote{Note that Choice is already implied by $\forall x\ |x|\leq\omega$
but we retain the $C$ just to emphasize that Choice is available. }
\begin{thm}
$MV\vdash ZFC_{count}^{-}$.\label{thm:MV|-ZFC_count}
\end{thm}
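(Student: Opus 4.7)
The plan is to verify each axiom of $ZFC_{count}^{-}$ for the set sort of $MV$, using Set-capture, iterated Amalgamation, and Extension in concert with the fact that every world is a transitive model of $ZFC$. Because worlds are transitive and share a common ordinal structure, $\Delta_{0}$-formulas are absolute between any world $W$ and the set-domain reduction of $MV$; this is the engine that will let us transfer set-theoretic constructions back and forth.

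First I would dispatch the routine axioms: Extensionality, Pairing, Union, Empty Set, Infinity, and Choice. In each case I would take the relevant input sets, place them in a single capturing world $W$ via Set-capture and finitely many applications of Amalgamation, invoke the corresponding $ZFC$-axiom inside $W$, and note that World-transitivity makes the resulting set behave correctly in the ambient model. The countability axiom $\forall x\,|x|\leq\omega$ then drops out of Extension: given $x\in W$, force over $W$ with the L\'{e}vy collapse $\mathbb{C}_{|x|^{W}+1}$ computed inside $W$ to obtain an extension world $U$ in which $x$ is countable, with the witnessing surjection $\omega\to x$ living in $U$ and hence a set of the ambient model.

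The delicate axioms are Separation, Collection, and Set Induction, because formulas of $\mathcal{L}_{\in}$ interpreted over the set domain of $MV$ may involve quantifiers that range outside any fixed world. For these I would argue by induction on formula complexity, simultaneously establishing that for every formula $\varphi(y,\bar a)$ and every set $x$ the slice $\{y\in x : \varphi^{MV}(y,\bar a)\}$ lives in some world. After reducing to a capturing world $W$ containing $x,\bar a$ and using Extension to enumerate $x$ as a countable sequence, the $\Delta_{0}$ step is immediate from absoluteness plus Separation inside $W$. Existential quantifiers are handled by collecting witnesses into a single world via iterated Amalgamation --- feasible here because after collapse only countably many witnesses are needed --- while the Laver--Woodin representation of grounds keeps track of how each ground sits inside the amalgam. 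Set Induction then follows by applying Foundation inside a world containing the transitive closure of a putative minimal counterexample.

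The main obstacle will be closing this induction cleanly. $MV$ only guarantees \emph{binary} Amalgamation, so arranging a single world that sees witnesses for the countably many existential queries arising in a single Separation instance requires real care. I expect the key auxiliary step to assert that over any world $W$ and any countable family of existential $\mathcal{L}_{\in}$-queries, one can iterate Extension and Amalgamation to produce a single forcing extension $U\supseteq W$ in which every needed witness appears; once that is in place, Collection drops out by the usual argument, and Separation and Set Induction follow by the inductive scheme sketched above.
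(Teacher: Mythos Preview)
Your approach is genuinely different from the paper's, and the gap you flag at the end is real and unfilled. The paper does not verify the $ZFC_{count}^{-}$ axioms one by one inside $MV$. Instead it argues semantically: given an arbitrary model $\mathcal{W}$ of $MV$, pass to a countable elementary substructure, then invoke Steel's representation theorem (Fact~\ref{fact:Steel,sound-comp}(2)) to write $\mathcal{W}=\mathcal{M}^{G}$ for some world $\mathcal{M}$ and some $(\mathbb{C}_{Ord})^{\mathcal{M}}$-generic $G$. The set domain is then literally $\mathcal{M}[G]$, a pretame class-forcing extension of a $ZFC$ model, and pretameness delivers $ZFC^{-}$ in one stroke; countability of every set follows from Extension. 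All of the work is outsourced to the representation theorem, which is proved elsewhere.

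Your direct axiom-chasing route is blocked precisely where you suspect. Binary Amalgamation does not yield the countable amalgamation you need for the existential step of your formula induction: to absorb $\omega$-many witnesses into a single world you would have to iterate Amalgamation along an $\omega$-sequence of worlds, but worlds are not sets, the theory cannot form such sequences as objects, and there is no limit axiom. Bounding the witnesses by a single ordinal $\alpha$ (so that one $\mathbb{C}_{\alpha}$-extension suffices) already presupposes an instance of Collection, which is circular. The structural fact you are reaching for---that the set domain is a single pretame class-forcing extension of any fixed world---\emph{is} the content of the representation theorem; so a completed version of your argument would have to reprove it en route. The paper simply cites it.
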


Before we remark on the proof of this theorem, this is a good moment
to return our attention to the continuum hypothesis and consider how
it should be understood in the context of $MV$. We remarked above
that $MV$ was developed with an eye to taking the independence of
the continuum hypothesis seriously. But how is this achieved? It turns
out that there are two ways of thinking about this. On the first way
of understanding things, we treat each world in the multiverse as
a genuine alternative and we see then that there will be worlds where
the continuum hypothesis is true and worlds where it is false. This
is a theorem of $MV$ that is essentially a consequence of Cohen's
initial work on forcing \citep{Cohen1963-COHTIO-5}. We might call
this the \emph{local perspective}. From this point of view, the cardinality
of the continuum is not determined since it has many possible values
across the multiverse. By contrast, the \emph{global perspective}
takes up the point of view considered in Theorem \ref{thm:MV|-ZFC_count}.
Rather than worrying about whether the continuum hypothesis is true
in one world or false in another, we ask whether it is true in the
underlying, global, universe of sets that sits behind each of the
worlds in the multiverse. The theorem above then tells us that every
set is countable. Unless we wish to contradict Cantor's famous theorem,
this means that there cannot be a set of all subsets of the natural
numbers: $\mathcal{P}(\omega)$ does not exist. Thus, from the global
perspective the continuum hypothesis is simply inexpressible and thus,
arguably enjoys an even greater degree of indeterminacy than is exhibited
locally. I do not wish to put forward a claim that one perspective
is better than the other or that a choice must be made. After all,
they both fit the formal theory quite well. Regardless, it is clear
that -- either way -- the continuum hypothesis has no definitive
answer in $MV$. 

\subsection{The models of $MV$}

Let return to Theorem \ref{thm:MV|-ZFC_count} and its proof. It shouldn't
be too surprising to see that $MV$ implies every set is countable:
Extension ensures that for any cardinal, there will be a world where
it has been collapsed. Since Cantor's theorem tells us that any set
and its powerset have different cardinalities, we also see that the
Powerset Axiom must fail. To show that $ZFC^{-}$ is satisfied, it
will be helpful to consider some existing results about models of
$MV$. Let's begin with some technical preliminaries.

A model of $MV$ is of the form $\mathcal{W}=\langle\mathbb{W}^{\mathcal{W}},\mathbb{D}^{\mathcal{W}},\in^{\mathcal{W}}\rangle$
where $\mathbb{W}^{\mathcal{W}}$ is the world domain, $\mathbb{D}^{\mathcal{W}}$
is the world domain and $\in^{\mathcal{W}}$ is the membership relation.
In \citep{SteelGP} and \citep{MeadMadPGMV}, we are given a particularly
nice class of models for $MV$ that we might regard as its \emph{standard
models}. Let $\mathcal{M}=\langle M,\in^{\mathcal{M}}\rangle$ be
a model of $ZFC$ and suppose that $G$ is $(\mathbb{C}_{Ord})^{\mathcal{M}}$-generic
over $\mathcal{M}$. Then let $\mathcal{M}[G]\supseteq\mathcal{M}$
be the (class) generic extension of $\mathcal{M}$ via $G$.\footnote{If $\mathcal{M}$ isn't transitive, it's a little fiddly to ensure
that $\mathcal{M}\subseteq\mathcal{M}[G]$. A way of doing this is
described in Appendix A.1 of \citep{MeadMadPGMV}, where it is denoted
as $\mathcal{M}_{ult}[G]$, however, we'll gloss over that subtlety
here.} Note that since $(\mathbb{C}_{Ord})^{\mathcal{M}}$ is a pretame
forcing over $\mathcal{M}$, we see that $\mathcal{M}[G]$ satisfies
$ZFC^{-}$. A standard model $\mathcal{M}^{G}$ is then formed from
the set $\mathbb{W}^{\mathcal{M}^{G}}$ of generic refinements of
$\mathcal{M}[G\restriction\alpha]$ for each $\alpha\in Ord^{\mathcal{M}}$.\footnote{Or a little more precisely, $\mathcal{N}=\langle N,\in^{\mathcal{N}}\rangle$
is a world in $\mathcal{M}^{G}$ is there is some $\alpha\in Ord^{\mathcal{M}}$
and $r\in\mathcal{M}[G\restriction\alpha]$ such that $x\in N$ iff
$\langle\mathcal{M}[G],\mathcal{M},\in^{\mathcal{M}[G]}\rangle\models x\in(\mathbb{W}_{r})^{\mathcal{M}[G\restriction\alpha]}$. } Thus, $\mathcal{M}^{G}=\langle\mathbb{W}^{\mathcal{M}^{G}},\mathcal{M}[G],\in^{\mathcal{M}[G]}$.
We then recall that:
\begin{fact}
(Steel, Theorem 26 and 27 in \citep{MeadMadPGMV}) (1) If $\mathcal{M}$
is a model of $ZFC$ and $G$ is $(\mathbb{C}_{Ord})^{\mathcal{M}}$-generic
over $\mathcal{M}$, then $\mathcal{M}^{G}\models MV$.\label{fact:Steel,sound-comp}

(2) If $\mathcal{W}$ is a countable model of $MV$ and $\mathcal{M}$
is a world in $\mathcal{W}$, then for some $G$ that is $(\mathbb{C}_{Ord})^{\mathcal{M}}$-generic
over $\mathcal{M}$, we have\label{fact:Steel comp}
\[
\mathcal{W}=\mathcal{M}^{G}.
\]
\end{fact}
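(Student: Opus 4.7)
The plan is to verify the two clauses in turn. Clause~(1) is essentially a soundness check that each axiom of $MV$ holds in $\mathcal{M}^{G}$, while clause~(2) is a back-and-forth construction exploiting countability of $\mathcal{W}$.

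For clause~(1), I would proceed axiom by axiom. World-extensionality, World-transitivity, World-ordinals, and $ZFC$-worlds are immediate from the definition of $\mathcal{M}^{G}$, since each world is a ground of some $\mathcal{M}[G\restriction\alpha]$ and grounds are inner models of $ZFC$ sharing the ordinals of $\mathcal{M}[G]$. Set-capture follows from the fact that every $x\in\mathcal{M}[G]$ has a set-sized name and so lies in some $\mathcal{M}[G\restriction\alpha]$, which is itself a world (being trivially its own ground). For Refinement, I would appeal to the transitivity of the ground relation: if $W$ is a ground of $\mathcal{M}[G\restriction\alpha]$ and $(\mathsf{W}_{r})^{W}$ is a ground of $W$, then $(\mathsf{W}_{r})^{W}$ is a ground of $\mathcal{M}[G\restriction\alpha]$, hence a world. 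Extension and Amalgamation both rest on the \emph{universality} of the L\'{e}vy collapse, namely that any set-sized forcing over $\mathcal{M}[G\restriction\alpha]$ is absorbed into $\mathcal{M}[G\restriction\beta]$ for large enough $\beta$. For Amalgamation specifically, if $W_{0}, W_{1}$ are grounds of $\mathcal{M}[G\restriction\alpha_{0}]$ and $\mathcal{M}[G\restriction\alpha_{1}]$ respectively, then $\mathcal{M}[G\restriction\max(\alpha_{0},\alpha_{1})]$ is a common generic extension of both.

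For clause~(2), I would use the countability of $\mathcal{W}$ to run a simultaneous enumeration of the worlds $(N_{i})_{i<\omega}$ of $\mathcal{W}$, the sets $(x_{i})_{i<\omega}$ in the set domain of $\mathcal{W}$, and all $\mathcal{M}$-dense subsets $(D_{i})_{i<\omega}$ of the posets $\mathbb{C}_{\alpha}^{\mathcal{M}}$ for $\alpha \in Ord^{\mathcal{M}}$. Then $G$ is built in $\omega$ stages, maintaining at stage $n$ that $G\restriction\alpha_{n}$ is $\mathbb{C}_{\alpha_{n}}^{\mathcal{M}}$-generic over $\mathcal{M}$, meets $D_{0},\ldots,D_{n}$, contains $x_{0},\ldots,x_{n}$ in $\mathcal{M}[G\restriction\alpha_{n}]$, and makes each of $N_{0},\ldots,N_{n}$ a ground of $\mathcal{M}[G\restriction\alpha_{n}]$. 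To handle $N_{n+1}$, I would apply Amalgamation inside $\mathcal{W}$ to produce a common generic extension of $\mathcal{M}[G\restriction\alpha_{n}]$ and $N_{n+1}$, and then invoke L\'{e}vy-collapse universality once more to realise this extension as $\mathcal{M}[G\restriction\alpha_{n+1}]$ for an appropriate $\alpha_{n+1}>\alpha_{n}$.

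The principal obstacle lies in clause~(2), where the construction must simultaneously preserve $\mathcal{M}$-genericity of $G$, make every world of $\mathcal{W}$ a ground of some $\mathcal{M}[G\restriction\alpha]$, and avoid producing worlds or sets not in $\mathcal{W}$. The first three desiderata are secured by the enumeration above; the last requires the converse --- that every ground of every $\mathcal{M}[G\restriction\alpha]$ lies in $\mathcal{W}$ --- which follows once one observes that $\mathcal{M}[G\restriction\alpha]$ itself is a world of $\mathcal{W}$ (being the amalgam produced at stage $\alpha$) and then applies Refinement inside $\mathcal{W}$ to recover its grounds. The delicacy is entirely in the interleaving: at each stage one must simultaneously meet an $\mathcal{M}$-dense set, absorb a possibly exotic world supplied by $\mathcal{W}$, and leave room to continue, and checking that these three demands can always be met together is where the bulk of the work lies.
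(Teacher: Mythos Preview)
The paper does not supply its own proof of this statement; it is recorded as a Fact with a citation to Theorems~26 and~27 of \citep{MeadMadPGMV}. Your outline is faithful to the strategy used there: clause~(1) is an axiom-by-axiom verification leaning on the absorption properties of the L\'{e}vy collapse, and clause~(2) is an $\omega$-stage construction that interleaves Amalgamation (to swallow the next world $N_{n+1}$) with collapse absorption (to realise the amalgam as some $\mathcal{M}[G\restriction\alpha_{n+1}]$). Two small points deserve tightening. First, your separate enumeration of $\mathcal{M}$-dense sets is redundant: once each $G\restriction\alpha_{n}$ is $\mathbb{C}_{\alpha_{n}}$-generic over $\mathcal{M}$ and the $\alpha_{n}$ are cofinal, the $Ord$-cc of $\mathbb{C}_{Ord}$ gives genericity of the union for free. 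Second, and more importantly, you should make explicit that the tail generic $H$ for $\mathbb{C}_{[\alpha_{n},\alpha_{n+1})}$ is obtained \emph{inside} $\mathcal{W}$ via the Extension axiom applied to the world $\mathcal{M}[G\restriction\alpha_{n}]$; this is what simultaneously guarantees that $H$ is genuinely $\mathcal{M}[G\restriction\alpha_{n}]$-generic and that $\mathcal{M}[G\restriction\alpha_{n+1}]$ remains a world of $\mathcal{W}$, without which the interleaving you describe could break down.

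It is also worth noting that the present paper, in a footnote within the proof of Theorem~\ref{thm:There-exist-models}, points to a cleaner alternative for clause~(2) due to Goldberg: one forces over $\mathcal{M}[G]$ with the poset $\mathbb{P}^{*}$ whose conditions are $\mathbb{C}_{\alpha}$-generics over $\mathcal{M}$ for various $\alpha$, ordered by reverse inclusion, and reads off the desired $G$ from a $\mathbb{P}^{*}$-generic. This trades your explicit bookkeeping for a pair of density arguments, but the mathematical content is the same.
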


Informally speaking, given any model $\mathcal{M}$ of $ZFC$ and
a generic to collapse its ordinals, we may use them to obtain a standard
model of $MV$. And in the other direction, we see that if $\mathcal{W}$
is a \emph{countable }model of $MV$ then it must be a standard model.
We can use this to prove the theorem about the set part of $MV$. 
\begin{proof}
(of Theorem \ref{thm:MV|-ZFC_count}). Let $\mathcal{W}$ be a model
of $MV$. We'd like to apply Fact \ref{fact:Steel comp}(2), but for
that we need $\mathcal{W}$ to be countable. But if $\mathcal{W}$
is not countable, we can simply take a countable Skolem hull. Now
let $\mathcal{M}$ be an arbitrary world in $\mathcal{W}$. Fact \ref{fact:Steel comp}(2)
then tells us that we may fix some $(\mathbb{C}_{Ord})^{\mathcal{M}}$-generic
over $\mathcal{M}$ such that $\mathcal{W}=\mathcal{M}^{G}$. Then
since $(\mathbb{C}_{Ord})^{\mathcal{M}}$ is pretame, the set domain
$\mathcal{M}[G]$ of $\mathcal{M}^{G}$ satisfies $ZFC^{-}$. Moreover,
Extension ensures that every set is countable. 
\end{proof}
The Fact above is stated quite generally in that it accommodates
the worlds that are nonstandard in the sense that their membership
relation is not well-founded. But most (although not all) of the time,
we'll be dealing with models in which every world is transitive. In
those cases, we know that $\in^{\mathcal{W}}$ is just the \emph{real
}$\in$ restricted to $\mathbb{D}^{\mathcal{W}}$. Moreover, in these
case we can, without loss of generality, think of $\mathbb{W}^{\mathcal{W}}$
as literally being the set of those transitive sets. If we do this
then we see that $\mathbb{D}^{\mathcal{W}}=\bigcup\mathcal{W}$.\footnote{We'll see later that the converse does not hold: we cannot recover
the world domain from the set domain.} Thus, when we deal with transitive models of $MV$, we see that $\mathbb{D}^{\mathcal{W}}$
and $\in^{\mathcal{W}}$ are redundant and so in such cases, we'll
often abuse notation and let $\mathcal{W}$ just be its world domain
$\mathbb{W}^{\mathcal{W}}$. Now if we restrict our attention countable
transitive models we get a particularly nice model-theoretic representation
$MV$. 
\begin{cor}
For countable $\mathcal{L}_{MV}$-structures $\mathcal{W}$ containing
transitive worlds, the following are equivalent:\label{cor:Steel}
\begin{enumerate}
\item $\mathcal{W}\models MV$; and
\item $\mathcal{W}=M^{G}$ for some $G$ that is $Col(\omega,<Ord^{M})$-generic
over $M.$
\end{enumerate}
\end{cor}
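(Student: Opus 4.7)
The plan is to derive Corollary \ref{cor:Steel} directly from Fact \ref{fact:Steel comp}, exploiting the simplifications available when all worlds are transitive and $\mathcal{W}$ is countable. There is essentially no new content here beyond a careful translation through the notational abuses introduced just before the statement.

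For the direction $(2)\Rightarrow(1)$, I would suppose $\mathcal{W}=M^G$ for a transitive $M\models ZFC$ and some $G$ that is $Col(\omega,<Ord^M)$-generic over $M$. Since $M$ is transitive, the forcing poset $(\mathbb{C}_{Ord})^M$ computed inside $M$ is literally $Col(\omega,<Ord^M)$, so the hypothesis of Fact \ref{fact:Steel comp}(1) applies directly to $\langle M,\in\rangle$ and $G$, yielding $\mathcal{W}=M^G\models MV$.

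For the direction $(1)\Rightarrow(2)$, let $\mathcal{W}$ be a countable $\mathcal{L}_{MV}$-structure whose worlds are all transitive, with $\mathcal{W}\models MV$. The Set-capture axiom guarantees at least one world exists, so I would pick any world $\mathcal{M}$; by assumption $\mathcal{M}$ is transitive, and under the standing abuse of notation I may identify it with its underlying transitive set $M$, with $\in^{\mathcal{M}}$ the genuine restriction of $\in$. Applying Fact \ref{fact:Steel comp}(2) to the countable $MV$-model $\mathcal{W}$ and the world $M$ produces a $G$ that is $(\mathbb{C}_{Ord})^M$-generic over $M$ with $\mathcal{W}=M^G$. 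Transitivity of $M$ again gives $(\mathbb{C}_{Ord})^M=Col(\omega,<Ord^M)$, which is the exact form demanded in (2).

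The hard part, insofar as there is one, is purely bookkeeping: verifying that identifying the transitive world $\mathcal{M}$ with its underlying set, and suppressing $\mathbb{D}^{\mathcal{W}}$ and $\in^{\mathcal{W}}$ in favour of $\mathbb{W}^{\mathcal{W}}$, does not hide information that could block either application of Fact \ref{fact:Steel comp}. Both the coincidence of internal and external ordinals and the coincidence of the internal and external L\'{e}vy collapse are absoluteness observations for transitive models, and no further forcing or model-theoretic machinery is needed beyond what Fact \ref{fact:Steel comp} already supplies.
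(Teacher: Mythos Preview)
Your proposal is correct and matches the paper's approach: the corollary is stated without proof immediately after Fact~\ref{fact:Steel,sound-comp}/\ref{fact:Steel comp}, and is intended as a direct specialization of that fact to the countable transitive setting, exactly as you have written it out.
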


For a helpful way to think about standard models of $MV$, it is helpful
to recall the proof behind Solovay's famous model where the reals
are Lebesgue measurable, have the property of Baire and the perfect
set property \citep{Solovay1970AMO}. We begin that proof by supposing
that there is an inaccessible cardinal $\kappa$ and then consider
a model where all the ordinals below $\kappa$ have been collapsed
using a generic $G$ for $\mathbb{C}_{\kappa}$. This was the first
of many similar models obtained by collapsing large cardinals. It
is a familiar playground for set theorists and -- as we'll see many
of the techniques for working there translate quite directly to models
of $MV$. The main difference is that to make a standard model of
MV we don't just collapse some of the ordinals, we collapse all of
them. As we've seen this gives us our set domain. We might then regard
the worlds of the multiverse are recording the history of all possible
forcings along the road to this great collapse.

Standard models arguably provide the most natural tool for understanding
$MV$ and how it is connected to $ZFC$. Moreover, this bridge provides
a helpful and well understood toolbox for proving theorems about $MV$.
However, not all models of $MV$ are standard. Given that we concerned
in this paper with questions around categoricity, it will be helpful
to survey the landscape beyond the realm of standard models. Of course,
$MV$ is just a two-sorted theory in first order logic and so, by
the upward L\"{o}wenheim-Skolem theorem, it is obvious that there are
models of $MV$ of every possible cardinality. But if we place some
natural constraints on the models and the worlds they contain a more
interesting picture emerges. For example, given Corollary \ref{cor:Steel}
we might wonder if every model $\mathcal{W}$ of $MV$ containing
countable transitive models must itself be countable. This is not
the case. 
\begin{prop}
If $MA(\kappa)$ holds for $\kappa\geq\omega$, then there are models
$\mathcal{W}$ of $MV$ with $|\mathcal{W}|=\kappa^{+}$ where every
world $M$ in $\mathcal{W}$ is countable and transitive (if there
is a countable transitive model of $ZFC$). Moreover, any such $\mathcal{W}$
will be nonstandard. 
\end{prop}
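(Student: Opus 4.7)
The ``moreover'' clause is immediate from Fact~\ref{fact:Steel comp}(2). If $\mathcal{W}$ were standard with all worlds countable and transitive, then $\mathcal{W}=\mathcal{M}^{G}$ for some world $\mathcal{M}$ of $\mathcal{W}$ and some $(\mathbb{C}_{Ord})^{\mathcal{M}}$-generic $G$. Since $\mathcal{M}$ is a world, it is countable and transitive, so $\delta:=Ord^{\mathcal{M}}$ is a countable ordinal. But the worlds of $\mathcal{M}^{G}$ are generic refinements of the $\aleph_{0}$-many countable transitive sets $\mathcal{M}[G\restriction\alpha]$ for $\alpha<\delta$, and each such set has only countably many generic refinements (parametrised by its reals). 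Hence $|\mathcal{W}|\leq\aleph_{0}$, contradicting $|\mathcal{W}|=\kappa^{+}\geq\omega_{1}$.

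For the existence claim, fix a countable transitive model $M_{0}\models ZFC$ with $\delta=Ord^{M_{0}}$. The plan is a transfinite recursion of length $\kappa^{+}$ that adjoins a Cohen-generic real at each stage. Begin with $\mathcal{W}_{0}=M_{0}^{G_{0}}$, a countable standard model of $MV$. At a successor stage $\alpha+1$, invoke $MA(\kappa)$ applied to Cohen forcing (which is ccc) to produce a real $c_{\alpha}\in 2^{\omega}$ that is Cohen-generic over every world in $\mathcal{W}_{\alpha}$ simultaneously: this is feasible because $|\mathcal{W}_{\alpha}|\leq\kappa$ and each world, being countable, contributes only countably many dense subsets, for a total of at most $\kappa$ dense sets to meet. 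Let $\mathcal{W}_{\alpha+1}$ be the closure of $\mathcal{W}_{\alpha}\cup\{W[c_{\alpha}]:W\in\mathcal{W}_{\alpha}\}$ under set-generic extensions and refinements, where the generics used in closing are chosen---again by $MA(\kappa)$---to be mutually generic with everything previously used over the appropriate common grounds. Take unions at limit stages and set $\mathcal{W}=\bigcup_{\alpha<\kappa^{+}}\mathcal{W}_{\alpha}$; a routine cardinal computation yields $|\mathcal{W}|=\kappa^{+}$.

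The $MV$-axioms other than Amalgamation are verified routinely: World-extensionality, World-transitivity, World-ordinals, and $ZFC$-worlds hold because every world is a countable transitive model of $ZFC$ of common height $\delta$; Set-capture is immediate from $\mathbb{D}^{\mathcal{W}}=\bigcup\mathcal{W}$; Extension and Refinement are secured by the closure step. The main obstacle is Amalgamation. Given $W_{1},W_{2}\in\mathcal{W}$, they both lie in some $\mathcal{W}_{\alpha}$, and a common generic extension is assembled inductively. The illustrative case is when $W_{i}=W_{i}^{0}[c_{\beta}]$ for some $\beta<\alpha$: first amalgamate $W_{1}^{0}$ and $W_{2}^{0}$ in $\mathcal{W}_{\beta}$ to a common extension $U^{0}$, and then $U^{0}[c_{\beta}]$ is a common generic extension of $W_{1}$ and $W_{2}$, using that $c_{\beta}$ was chosen Cohen-generic over every world in $\mathcal{W}_{\beta}$ and hence over $U^{0}$.

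The technical heart of the proof is the mutual-genericity bookkeeping that makes all such amalgamation arguments go through uniformly, especially where the worlds involve mixtures of Cohen extensions, arbitrary set-generic extensions, and refinements. This is precisely where $MA(\kappa)$ plays an essential role: at stages $\alpha$ where $|\mathcal{W}_{\alpha}|$ has grown past $\aleph_{0}$, a single Cohen real (and more generally any ccc generic) must meet uncountably many dense sets arising from uncountably many countable worlds, a feat that the classical Baire-category argument for single-countable-model genericity cannot perform.
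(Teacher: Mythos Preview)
Your nonstandardness argument is correct and matches the paper's.

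For existence, the paper takes a different and cleaner route that avoids precisely the ``mutual-genericity bookkeeping'' you identify as the technical heart but do not carry out. Rather than adjoining a single Cohen real at each stage and then closing, the paper builds in one step a sequence $\langle G_{\alpha}\rangle_{\alpha<\kappa^{+}}$ of full $\mathbb{C}_{Ord^{M}}$-generics over a fixed countable transitive $M$, mutually generic in the sense that $\prod_{\gamma\in X}G_{\gamma}$ is $(\mathbb{C}_{Ord^{M}})^{|X|}$-generic over $M$ for every finite $X\subseteq\kappa^{+}$. At stage $\alpha$ one collects the dense subsets of $\mathbb{C}_{Ord^{M}}$ lying in $M[\prod_{\gamma\in X}G_{\gamma}]$ for finite $X\subseteq\alpha$; there are at most $\kappa$ of these, $\mathbb{C}_{Ord^{M}}$ is ccc in $V$ (being countable), and $MA(\kappa)$ delivers $G_{\alpha}$. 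The multiverse is then simply the set of generic refinements of $M[\prod_{\gamma\in X}(G_{\gamma}\restriction\beta)]$ for finite $X\subseteq\kappa^{+}$ and $\beta\in Ord^{M}$. Amalgamation and Extension are now automatic: any two worlds are refinements of models indexed by $(X_{1},\beta_{1})$ and $(X_{2},\beta_{2})$, and $(X_{1}\cup X_{2},\max(\beta_{1},\beta_{2}))$ indexes a common extension; any poset in a world is absorbed by a longer segment of one of the $G_{\gamma}$.

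Your plan, by contrast, defers the real work into the vague closure step. The gap is this: after you ``close under set-generic extensions,'' two newly added worlds $W[g_{1}]$ and $W'[g_{2}]$, with $g_{i}$ generic for unrelated posets over unrelated grounds, must be shown to have a common extension \emph{inside} $\mathcal{W}$. Saying the $g_{i}$ are ``mutually generic with everything previously used'' does not produce such a world; you would need $g_{2}$ to be generic over $U[g_{1}]$ for some amalgam $U$ of $W,W'$ \emph{and} you would need $U[g_{1}][g_{2}]$ to actually appear in your closure, which requires an internal iteration you have not described. The paper's device of passing to full $\mathbb{C}_{Ord^{M}}$-generics and finite products is exactly what makes the directed structure---and hence Amalgamation---come for free, and is the idea your sketch is missing.
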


\begin{proof}
We start by observing that such models cannot be standard. To see
this, suppose $M$ is a world in $\mathcal{W}$ and $G$ is $\mathbb{C}_{Ord}^{M}$-generic
over $M$. Then for all $\alpha\in Ord^{M}<\omega_{1}$, $M[G\restriction\alpha]$
is countable and so it has countably many generic refinements, $\mathsf{W}_{r}^{M[G\restriction\alpha]}$,
indexed by elements $r\in M[G\restriction\alpha${]}. Thus, the worlds
of $M^{G}$ are a countable union of countable sets, while $\mathcal{W}$
is uncountable.

Now we prove that such models exist. Let $M$ be a countable transitive
model of $ZFC$. Before we build our target model, first observe that
if $G_{0}$ and $G_{1}$ are $\mathbb{C}_{Ord^{M}}$-generic over
$M$, then we can easily find $G_{2}$ that is $\mathbb{C}_{Ord^{M}}$-generic
over $M$ where $M[G_{2}]=M[G_{0}\times G_{1}]$. Thus, we obtain
a standard model $M^{G_{2}}$ extending both $M^{G_{0}}$ and $M^{G_{1}}$.
Clearly, this can be generalized to any finite set of $\mathbb{C}_{Ord^{M}}$-generics.
We might say that any finite collection standard models based on $M$
can be amalgamated into another standard model. Similar remarks apply
to countable collections of standard models, but as we've seen no
amalgamation fact can hold for uncountable collections of standard
models. We address this problem by inductively defining a sequence
$\langle G_{\alpha}\rangle_{\alpha<\kappa^{+}}$ of filters that are
mutually $\mathbb{C}_{Ord^{M}}$-generic over $M$. Let $\alpha<\kappa^{+}$
and suppose that we've already defined $\langle G_{\beta}\rangle_{\beta<\alpha}$
such that for all finite $X\subseteq\alpha$, $\prod_{\gamma\in X}G_{\gamma}$
is $\prod_{\gamma\in X}\mathbb{C}_{Ord^{M}}$-generic over $M$.\footnote{Note that $\prod_{\gamma\in X}\mathbb{C}_{Ord^{M}}$ is just $(\mathbb{C}_{Ord^{M}})^{|X|}$.}
We wish to define $G_{\alpha}$ so that $\langle G_{\beta}\rangle_{\beta<\alpha+1}$
has the analogous property at $\alpha+1$. For finite $X\subseteq\alpha$,
let 
\[
\mathcal{D}_{X}=\{D\in M[\prod_{\gamma\in X}G_{\gamma}]\ |\ D\text{ is dense in }\mathbb{C}_{Ord^{M}}\}.
\]
Then let $\mathcal{D}=\bigcup_{X\in[\alpha]^{<\omega}}\mathcal{D}_{X}$.
Since $\alpha<\kappa^{+}$, we see $|\mathcal{D}|\leq\kappa$ and
since $\mathbb{C}_{Ord^{M}}$ has the ccc (in $V$), we may use $MA(\kappa)$
we may fix a filter $G_{\alpha}$ where $G_{\alpha}\cap D\neq\emptyset$
for all $D\in\mathcal{D}$. This ensures that for all finite $X\subseteq\alpha$,
$G_{\alpha}$ is $\mathbb{C}_{Ord^{M}}$-generic over $M[\prod_{\gamma\in X}G_{\gamma}]$
and application of the product lemma completes our inductive argument.
Finally, we then let $\mathcal{W}$ be the set of generic refinements
of models of the form $M[\prod_{\gamma\in X}(G_{\gamma}\restriction\alpha)]$
where $X\in[\kappa^{+}]^{<\omega}$ and $\alpha\in Ord^{M}$. 
\end{proof}
Thus, we see that in contexts where some form of Martin's axiom holds,
there will be large nonstandard models of $MV$ in which every world
is countable and transitive. Moreover, since $MA(\omega)$ is a theorem
of $ZFC$, we see that such nonstandard models are unavoidable.
\begin{cor}
There are models $\mathcal{W}$ of $MV$ with $|\mathcal{W}|=\aleph_{1}$
where every world $\mathcal{M}$ in $\mathcal{W}$ is countable and
transitive. 
\end{cor}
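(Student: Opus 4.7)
The plan is to deduce this immediately from the preceding proposition by specializing to $\kappa=\omega$. Since $\kappa^{+}=\omega^{+}=\aleph_{1}$, the conclusion of the proposition at $\kappa=\omega$ is exactly the statement of the corollary, provided the hypothesis $MA(\omega)$ is available.

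First I would verify that $MA(\omega)$ is a theorem of $ZFC$ outright. This is nothing other than the Rasiowa--Sikorski lemma: given a partial order $\mathbb{P}$ and a countable family $\langle D_{n}\rangle_{n<\omega}$ of dense subsets of $\mathbb{P}$, one uses dependent choice to pick a descending chain $p_{0}\geq p_{1}\geq\cdots$ with $p_{n}\in D_{n}$, and then takes the filter generated by this sequence. No ccc or cardinal-arithmetic assumption is needed at this level, so the hypothesis of the proposition is automatic.

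With that in hand, I would apply the proposition with $\kappa=\omega$. The inductive construction there collapses at each stage to a countable bookkeeping step: at stage $\alpha<\omega_{1}$, the family $\mathcal{D}=\bigcup_{X\in[\alpha]^{<\omega}}\mathcal{D}_{X}$ is a countable union of countable sets (since each $M[\prod_{\gamma\in X}G_{\gamma}]$ is a countable model), so $|\mathcal{D}|\leq\omega$, and the single filter $G_{\alpha}$ meeting every $D\in\mathcal{D}$ is produced directly by Rasiowa--Sikorski, without any appeal to Martin's axiom beyond its trivial instance. The resulting $\mathcal{W}$ has cardinality $\aleph_{1}$, every world is a generic refinement of some countable transitive model of the form $M[\prod_{\gamma\in X}(G_{\gamma}\restriction\alpha)]$, and hence is itself countable and transitive.

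There is essentially no obstacle: the only thing worth flagging is the standing background assumption that a countable transitive model of $ZFC$ exists, which is implicit throughout this section (and needed to get the construction off the ground at all). Granting that, the corollary is a direct, almost notational, instance of the proposition.
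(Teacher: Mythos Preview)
Your proposal is correct and matches the paper's own argument essentially verbatim: the paper derives the corollary directly from the preceding proposition by noting that $MA(\omega)$ is a theorem of $ZFC$ (the Rasiowa--Sikorski lemma), so the case $\kappa=\omega$ applies unconditionally. Your additional remark about the standing assumption of a countable transitive model of $ZFC$ is also in line with the paper's treatment, which carries that hypothesis through from the proposition.
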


Given that the delivery of generic sets for models of $ZFC$ is often
dependent on those models being countable, we might also wonder whether
standard models of $MV$ can only contain countable models. This is
not the case. 
\begin{prop}
If $0^{\#}$ exists, there there are standard models $\mathcal{W}$
of $MV$ that contain uncountable transitive worlds.\label{prop:Hjorth}
\end{prop}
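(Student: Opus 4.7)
The plan is to exhibit a transitive uncountable model $M$ of $ZFC$ together with a $\mathbb{C}_{Ord}^{M}$-generic filter $G\in V$ over $M$, and then invoke Fact \ref{fact:Steel,sound-comp}(1) to conclude that $\mathcal{W}=M^{G}$ is a standard model of $MV$ containing $M$ as an uncountable transitive world.

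First, set $M=L_{\omega_{1}^{V}}$. Since $0^{\#}$ exists, every uncountable cardinal of $V$ is inaccessible in $L$; in particular $\omega_{1}^{V}$ is inaccessible in $L$, whence $M\models ZFC$. The set $M$ is transitive and has $V$-cardinality $\omega_{1}$. Observe that $M$ will itself count as a world of $\mathcal{W}=M^{G}$: taking the initial segment $G\restriction 0=\emptyset$, we have $M[G\restriction 0]=M$, and $M$ is the trivial generic refinement of itself (via the parameter $r$ for which $\mathsf{W}_{r}^{M}=M$).

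The real crux is to produce $G$. In $V$, the forcing $\mathbb{C}_{Ord}^{M}=\mathbb{C}_{<\omega_{1}^{V}}^{M}$ is a set of cardinality $\omega_{1}$, and the dense subclasses that lie in $M$ form an $\omega_{1}$-sized family. The assumption $0^{\#}$ is used to bring these dense sets under control: every element of $L_{\omega_{1}^{V}}$ is $L$-definable from a finite increasing tuple of Silver indiscernibles below $\omega_{1}^{V}$. Enumerating the dense sets by such defining tuples as $\langle D_{\alpha}\rangle_{\alpha<\omega_{1}^{V}}$, one runs a transfinite recursion of length $\omega_{1}^{V}$, exploiting the indiscernibility of the indiscernibles to show that any previously chosen finite collapse condition can be extended into the next $D_{\alpha}$ uniformly. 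Equivalently, one can appeal directly to the theorem (which is exactly the flavour of Hjorth's work on generics over $L$) that under $0^{\#}$ there is in $V$ a $\mathbb{C}_{Ord}^{L}$-generic filter over $L$; its restriction to the forcing of $M$ is automatically $\mathbb{C}_{Ord}^{M}$-generic over $M$, since the dense classes of $M$ are a sub-collection of those of $L$.

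Once $G$ is in hand, Fact \ref{fact:Steel,sound-comp}(1) yields that $\mathcal{W}=M^{G}$ is a standard model of $MV$, and the uncountable transitive world $M$ belongs to $\mathcal{W}$, as desired. The principal obstacle is the construction of $G$: a naive $\omega_{1}$-step diagonalization is hopeless without extra combinatorial help, and the indispensable role of $0^{\#}$ is to deliver, via the indiscernibles, both a parametrization of the dense classes of $M$ and the homogeneity needed to extend conditions uniformly from one stage of the recursion to the next.
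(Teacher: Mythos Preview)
Your setup is exactly right and matches the paper: take $M=L_{\omega_{1}}$, observe that $0^{\#}$ makes it a model of $ZFC$, and reduce everything to producing a $\mathbb{C}_{\omega_{1}}$-generic $G$ over $M$ in $V$. The gap is in the construction of $G$. You propose a length-$\omega_{1}$ recursion extending a single finite condition into each successive dense set $D_{\alpha}$, with indiscernibility supplying ``uniformity''. But the real obstacle at limit stages is not finding the next extension uniformly; it is that the descending sequence of finite conditions built so far has no lower bound in $\mathbb{C}_{\omega_{1}}$. Nothing you say addresses this, and indiscernibility alone does not. Your alternative route is also misstated: a $\mathbb{C}_{Ord}^{L}$-generic over $L$ would collapse every ordinal of $V$ and cannot live in $V$, so there is no such object to restrict.

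The paper's construction avoids the limit problem by building \emph{filters} rather than \emph{conditions}. One fixes the club $\langle\kappa_{\alpha}\rangle_{\alpha\le\omega_{1}}$ of Silver indiscernibles and produces an increasing tower $\langle G_{\alpha}\rangle$ where each $G_{\alpha}$ is $\mathbb{C}_{\kappa_{\alpha}}$-generic over $L$. Successor steps are easy because $\mathcal{P}(\mathbb{C}_{[\kappa_{\alpha},\kappa_{\alpha+1})})^{L}$ is countable in $V$. The crucial point at limits is that each $\kappa_{\beta}$ is \emph{inaccessible} in $L$, so $\mathbb{C}_{\kappa_{\beta}}$ has the $\kappa_{\beta}$-cc in $L$ and every maximal antichain of $L$ in $\mathbb{C}_{\kappa_{\beta}}$ already lives in some $\mathbb{C}_{\kappa_{\alpha}}$ with $\alpha<\beta$; hence $G_{\beta}:=\bigcup_{\alpha<\beta}G_{\alpha}$ is automatically generic. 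The feature of $0^{\#}$ that is actually used is thus the inaccessibility of the indiscernibles (for the chain condition) together with the countability of $\mathcal{P}(\mathbb{C}_{\alpha})^{L}$ for each $\alpha<\omega_{1}$, not a parametrization of dense sets by defining tuples.
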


\begin{proof}
Since $0^{\#}$ exists, we know that $L_{\omega_{1}}$ satisfies $ZFC$.
To deliver a standard model of $MV$ as desired, it will suffice to
show that there is some $G$ that is $\mathbb{C}_{\omega_{1}}$-generic
over $L_{\omega_{1}}$. Since $\mathbb{C}_{\omega_{1}}$ is not countable,
the usual Baire category argument will not work. We address this by
defining the required generic inductively.\footnote{A version of this argument can be found in the first claim within
the proof of Theorem 3.5 in \citep{HjorthU2}. I learned it from a
MathOverflow response by Farmer Schlutzenberg.}

First, we observe that for all $\alpha<\omega_{1}$, $\mathcal{P}(\mathbb{C}_{\alpha})^{L}$
is countable and so we may obtain a $\mathbb{C}_{\alpha}$-generic
filter over $L_{\omega_{1}}$ for any such $\alpha$. To set up the
induction, fix the club sequence $\langle\kappa_{\alpha}\rangle_{\alpha\leq\omega_{1}}$
of indiscernibles below $\omega_{1}$ witnessing that $0^{\#}$ exists
and where $\kappa_{\omega_{1}}=\omega_{1}$. We aim to deliver a sequence
$\langle G_{\alpha}\rangle_{\alpha\leq\omega_{1}}$ such that:
\begin{enumerate}
\item $G_{\alpha}$ is $\mathbb{C}_{\kappa_{\alpha}}$-generic over $L$;
and 
\item $G_{\alpha}\subseteq G_{\beta}$ for all $\alpha\leq\beta\leq\omega_{1}$. 
\end{enumerate}
For successor ordinals, $\alpha+1$, we just find $H$ that is $\mathbb{C}_{[\kappa_{\alpha},\kappa_{\alpha+1})}$-generic
over $L[G_{\alpha}]$ and then $G_{\alpha+1}$ can be what is essentially
the product of $G_{\alpha}$ and $H$. At limit ordinals $\beta$,
we have to be a little more careful. Note first that each $\kappa_{\beta}$
is inaccessible in $L$. Thus, $L$ thinks that $\mathbb{C}_{\kappa_{\beta}}$
has the $\kappa_{\beta}$-cc property and so any maximal antichain
$A$ in $\mathbb{C}_{\kappa_{\beta}}$ will be a subset of $\mathbb{C}_{\kappa_{\alpha}}$
for some $\kappa_{\alpha}<\kappa_{\beta}$.\footnote{See the first lemma on page 15 of \citep{Solovay1970AMO} for the
canonical proof of this fact.} Given this, we can let $G_{\beta}=\bigcup_{\alpha<\beta}G_{\alpha}$. 
\end{proof}
This result depends on a strong assumption beyond $ZFC$. Without
it, it is possible to find models $\mathcal{W}$ of $MV$ where every
world is transitive, but $\mathcal{W}$ is not standard.
\begin{prop}
If $L_{\omega_{1}}\models ZFC$, then it is consistent that there
is a model $\mathcal{W}$ of $MV$ where every world $M$ in $\mathcal{W}$
is uncountable and transitive, but $\mathcal{W}$ is not standard.
\end{prop}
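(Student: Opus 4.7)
The plan is to adapt the $MA(\aleph_{1})$-style argument from the preceding proposition to the uncountable base model $M := L_{\omega_{1}^{V}}^{V}$. The key structural observation is that while $|M|^{V}=\omega_{1}^{V}$, every individual element of $M$ is countable in $V$: if $x\in M$ then $x\in L_{\alpha}$ for some $\alpha<\omega_{1}^{V}$, and $|L_{\alpha}|^{V}=\aleph_{0}$. The same hereditarily-countable property passes to every iterated generic extension of $M$ by set forcings, so every forcing notion that we encounter along the way is countable in the ambient universe, hence ccc.

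First I would force over $V$ with the standard ccc iteration $\mathbb{R}$ producing $MA(\aleph_{1})$ together with $2^{\aleph_{0}}=\aleph_{2}$, working in the extension $V[H]$. Since $\mathbb{R}$ is ccc, $\omega_{1}^{V}=\omega_{1}^{V[H]}$ and $M$ still has cardinality $\aleph_{1}$ in $V[H]$. Then, for each $\alpha<\omega_{2}^{V[H]}$ and each $\mathbb{P}\in M$, inductively construct an $M$-generic filter $G_{\alpha}^{\mathbb{P}}$ such that every finite subfamily is mutually generic over $M$. At stage $\alpha$ this is supplied by $MA(\aleph_{1})$: the union $\bigcup_{X\in[\alpha]^{<\omega}}\mathcal{D}_{X}$ of dense sets in the intermediate models $M[\prod_{(\gamma,\mathbb{Q})\in X}G_{\gamma}^{\mathbb{Q}}]$ has cardinality at most $|\alpha|\cdot\aleph_{1}=\aleph_{1}$, and every relevant finite product of the forcings involved is ccc since countable in $V[H]$.

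Next, define $\mathcal{W}$ in $V[H]$ as the closure of $\{M\}$ under the operations required by $MV$: generic extensions $W[g]$ for $W\in\mathcal{W}$ and $\mathbb{P}\in W$, using one of the prepared $G_{\alpha}^{\mathbb{P}}$ when $\mathbb{P}\in M$ and a fresh application of $MA(\aleph_{1})$ otherwise; generic refinements located via the uniform Laver--Woodin formula from Fact \ref{fact:Laver,-Woodin}; and pairwise amalgamations obtained from product genericity of the prepared filters. The critical point is that any $\mathbb{P}\in W$ is an individual element of $W$ and hence, by the observation above applied to $W$, countable in $V[H]$; consequently $\mathbb{P}$ is ccc in $V[H]$ and there are at most $|W|^{V[H]}=\aleph_{1}$ many dense subsets of $\mathbb{P}$ in $W$ to meet, so $MA(\aleph_{1})$ applies uniformly.

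Finally, every world of $\mathcal{W}$ is transitive, contains the ordinals of $M$, and has cardinality $\aleph_{1}$ in $V[H]$, hence is uncountable and transitive. For non-standardness: any candidate standard model $(W')^{G'}$ with $W'\in\mathcal{W}$ has at most $|W'|\cdot|\mathrm{Ord}^{W'}|=\aleph_{1}$ many worlds, whereas $\mathcal{W}$ contains the $\omega_{2}^{V[H]}$-many pairwise distinct worlds $M[G_{\alpha}^{\mathbb{P}_{0}}]$ for some fixed nontrivial $\mathbb{P}_{0}\in M$, so $|\mathcal{W}|^{V[H]}\geq\aleph_{2}>\aleph_{1}$. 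The main conceptual obstacle lies in the uniform verification of Extension, which rests entirely on the observation that elements of iterated generic extensions of $M$ remain countable in $V[H]$; once that is in hand, $MA(\aleph_{1})$ bootstraps the rest of the construction.
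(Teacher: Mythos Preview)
Your approach is genuinely different from the paper's. The paper passes to a Silver-collapse extension $L[H]$ of $L$ and proves that in $L[H]$ there are $\mathbb{C}_{\alpha}$-generics over $L_{\omega_{1}}$ for every $\alpha<\omega_{1}$ but provably \emph{no} $\mathbb{C}_{\omega_{1}}$-generic; the non-existence of the full generic is what blocks standardness for any multiverse built on $L_{\omega_{1}}$ there. You instead force $MA(\aleph_{1})$ and try to build a multiverse that is simply too large (size $\aleph_{2}$) to be standard. The paper's route yields a model in which \emph{every} multiverse based on $L_{\omega_{1}}$ is nonstandard, at the cost of a delicate comparison of Silver versus L\'{e}vy collapses; your route only delivers one nonstandard multiverse but reuses machinery already in hand.

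That said, your construction of $\mathcal{W}$ as a ``closure of $\{M\}$'' is under-specified and, as written, Amalgamation is in jeopardy. You prepare mutually generic $G_{\alpha}^{\mathbb{P}}$ for $\mathbb{P}\in M$, but for Extension at a world $W\neq M$ with $\mathbb{P}\notin M$ you invoke a ``fresh application of $MA(\aleph_{1})$''. Nothing forces that fresh generic to be mutually generic with the prepared ones or with other fresh ones, so two worlds produced this way may fail to amalgamate inside $\mathcal{W}$. You could repair this with explicit transfinite bookkeeping (at each stage choose new generics generic over all previously built worlds, keeping the stage-count below $\aleph_{2}$ so that only $\aleph_{1}$ dense sets need to be met), but the cleaner fix is to drop the closure and literally rerun the earlier $MA(\kappa)$ proposition with $M=L_{\omega_{1}}$ and $\kappa=\aleph_{1}$: observe that $\mathbb{C}_{\omega_{1}}=Col(\omega,<\omega_{1})$ is ccc in $V[H]$ (a $\Delta$-system argument, using that each factor $Col(\omega,\alpha)$ is countable), build $\aleph_{2}$-many mutually $\mathbb{C}_{\omega_{1}}$-generic filters over $L_{\omega_{1}}$, and let $\mathcal{W}$ be the grounds of $L_{\omega_{1}}[\prod_{\gamma\in X}(G_{\gamma}\restriction\alpha)]$ for finite $X\subseteq\omega_{2}$ and $\alpha<\omega_{1}$. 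Extension and Amalgamation then follow from absorption and product genericity exactly as before, and $|\mathcal{W}|=\aleph_{2}>\aleph_{1}$ gives nonstandardness by the cardinality count you already gave.
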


\begin{proof}
Again, we are going to build a multiverse from $L_{\omega_{1}}$.
However, we know from Proposition \ref{prop:Hjorth} that this cannot
be done within a model where $0^{\#}$ exists. More precisely, we
need to find a model where there is a $\mathbb{C}_{\alpha}$-generic
set for every $\alpha<\omega_{1}$, but no $\mathbb{C}_{\omega_{1}}$-generic
set exists. To obtain such a model, we use Silver's collapse forcing.
For $X\subseteq Ord$, we let $\mathbb{S}_{X}$ be the poset whose
domain is formed from functions $p:s\times Y\to Ord$ where we have:
the \emph{first domain}, $s\in[\omega]^{<\omega}$; the \emph{second
domain},\emph{ }$Y\in[X]^{\leq\omega}$; and for all $n\in s$ and
$\alpha\in Y$, $p(n,\alpha)<\alpha$.\footnote{See Section 20 in Cummings' chapter in \citep{foreman2009handbook}
for more information on this and a more general definition.} Silver's collapse forcing can be obtained from L\'{e}vy's collapse by
weakening the constraint on the second domain so that it merely has
to be countable rather than finite. This gives $\mathbb{S}_{\omega_{1}}$
quite different properties from $\mathbb{C}_{\omega_{1}}$. In particular,
$\mathbb{S}_{\omega_{1}}$ has, so to speak, a certain amount of countable
completeness in its second domain. 

Now let $H$ be $\mathbb{S}_{\omega_{1}}$-generic over $L$.\footnote{Strictly speaking, there is no reason to think such an $H$ exists.
However following usual conventions, we can think of ourselves as
now working \emph{virtually} within the context of $\Vdash_{\mathbb{S}_{\omega_{1}}}$
as calculated in $L$. This is sufficient for our consistency claim.
Ordinarily, such metamathematical subtleties can be largely ignored.
But given that this paper is so concerned with generic sets and whether
they exist, it is worth taking the time to take a little care around
this.} Since $\mathcal{P}(\mathbb{C}_{\alpha})^{L}$ is countable for all
$\alpha<\omega_{1}$, we see that there is a $\mathbb{C}_{\alpha}$
-generic over $L$ for each such $\alpha$. It will thus suffice to
show that $L[H]$ contains no sets $G$ that are $\mathbb{C}_{\omega_{1}}$-generic
over $L$.\footnote{Our proof is an elaboration of a sketch by Mohammad Golshani on MathOverflow.
Nothing in the argument relies on us using $L$ rather than $V$.
It just lines up more smoothly with the story of the proof. I'm grateful
to Jason Chen for drawing my attention to it.} To demonstrate this, we prove a claim that suffices for the proposition.
\begin{claim*}
(1) If $A\in[\omega_{1}]^{\omega_{1}}\cap V[H]$, then there is a
countably infinite $X\subseteq A$ in $L$.

(2) If $G$ is $\mathbb{C}_{\omega_{1}}$-generic over $L$, then
for some $A\in[\omega_{1}]^{\omega_{1}}\cap L[G]$ there is no countably
infinite $X\subseteq A$ in $L$. 
\end{claim*}
To see that this is sufficient, suppose toward a contradiction that
$G\in V[H]$ is $\mathbb{C}_{\omega_{1}}$-generic over $L$. Then
by (1) we may fix $A\in[\kappa]^{\kappa}\cap V[G]$ which has no countably
infinite subset in $L$. But then we see that $A\in V[G]\subseteq V[H]$
and so by (2) $A$ must have a countable infinite subset, which is
impossible. 

To prove (1), we make an argument that exploits the version of countable
completeness possessed by the second domain of $\mathbb{S}_{\omega_{1}}$.\footnote{The argument is a simple generalization of the standard argument that
forcings closed under $\omega$-sequences don't add any new $\omega$-sequences.
See, for example, Theorem VII.6.14 in \citep{KunenST}.} We suppose toward a contradiction that $L$ contains no injection
from $\omega$ into $A$. We continue by letting $\dot{A}\in L^{\mathbb{S}_{\omega_{1}}}$
be a name such that $\dot{A}_{H}=A$ and fixing $p\in H$ that forces
the statement: there is is no injection $f:\omega\to\dot{A}$ with
$f\in\check{V}$. Working in $L$, we then define sequences $\langle p_{n}\rangle_{n\in\omega}$
and $\langle\alpha_{n}\rangle_{n\in\omega}$ such that $p_{0}\leq p$
and for all $n\in\omega$:
\begin{itemize}
\item $p_{n}\Vdash\alpha_{n}\in\dot{A}$;
\item $\alpha_{n+1}>\alpha_{n}$; 
\item $p_{n+1}\leq p_{n}$; and 
\item $dom(p_{n})=m\times Y$ where $m\in\omega$ is least among the second
domains of conditions satisfying the first three bullet points.
\end{itemize}
We claim that there is some $k\in\omega$ such that for all $n\in\omega$,
if $m$ is the second domain of $p_{n}$, then $m\leq k$. To see
this, we move back to $V$ and observe that since $|A|=\kappa$ and
there are only countably many possibilities for the first domains
of conditions, there must be some $k\in\omega$ such that there are
infinitely many $\alpha\in A$ with some $p\in H$ with $p\Vdash\alpha\in\dot{A}$
where the second domain of $p$ is $<\kappa$. Since we have a bound
on the first domain, it can be seen that $q=\bigcup_{n\in\omega}p_{n}\in\mathbb{S}_{\omega_{1}}$.
Finally, let $J$ be $\mathbb{S}_{\omega_{1}}$-generic over $L$
with $q\in J$. Then we see that $\langle\alpha_{n}\rangle_{n\in\omega}$
is an injection from $\omega$ into $A$ that is an element of $L$,
which gives a contradiction completing our proof of (1).

To prove (2), we provide a $\mathbb{C}_{\omega_{1}}$-name $\dot{A}$
for a set $A$ whose realization has no countably infinite subset
in $L$. More specifically, we let 
\[
\dot{A}=\{\langle\check{\alpha},p\rangle\in\check{\omega}_{1}\times\mathbb{C}_{\omega_{1}}\ |\ p(0,\alpha)=1\}
\]
and $A=\dot{A}_{G}$. The see the idea behind this name, let $g=\bigcup G:\omega\times\omega_{1}\to\omega_{1}$
and note that $\alpha\in\dot{A}_{G}$ iff $g(0,\alpha)=1$. So we
are restricting our attention to how conditions $p\in\mathbb{C}_{\omega_{1}}$
act on $0$ in the first domain and we only care about whether the
output is $1$ or not. This gives us a name for an element of $[\omega_{1}]^{\omega_{1}}\cap L[G]$.

Next we claim that $A$ is $Add(\omega_{1},1)$-generic over $V$.
To see this, we recall a basic, abstract forcing fact. Let us say
that $\sigma:\mathbb{P}\to\mathbb{Q}$ is \emph{weakly dense} if $\sigma$
is order preserving and for all $p\in\mathbb{P}$ and $q\in\mathbb{Q}$
with $q\leq\sigma(p)$, there is some $p^{*}\in\mathbb{P}$ such that
$p\not\perp p^{*}$ and $\sigma^{*}(p)\leq q$. It is well-known then
that if $J$ is $\mathbb{P}$-generic over $V$, then the upward closure
of $\sigma``J$ is $\mathbb{Q}$-generic over $V$.\footnote{See Lemma 15.45 in \citep{JechST} for a proof.}
Now let us construe $Add(\omega_{1},1)$ as the poset whose conditions
$q$ are finite partial functions from $\omega_{1}$ to $2$; and
let $\sigma:\mathbb{C}_{\omega_{1}}\to Add(\omega_{1},1)$ be such
that for all $p\in\mathbb{C}_{\omega_{1}}$, $\sigma(p)(\alpha)=1$
iff $p(0,\alpha)=1$.\footnote{This is not strictly correct. Since $p(0,0)$ has no possible output
and $p(0,1)$ can only output $0$, we can't just use $\alpha$ on
both sides. Fixing this makes a simple idea ugly on the page, so I'll
stick with the sloppiness. It is, however, easily addressed and we
leave that to the reader.} It can be seen that $\sigma$ is a weakly dense embedding and so
$J=\sigma``G$ is $Add(\omega_{1},1)$-generic over $L$. Moreover,
it is not difficult to see that $A=\bigcup J$. Finally, suppose toward
a contradiction that $X\in L$ and $X\subseteq A$. Now let $\dot{J}$
be the canonical name for a $Add(\omega_{1},1)$-generic set.\footnote{Here we mean that $\dot{J}=\{\langle\check{q},q\rangle\ |\ q\in\mathbb{Q}\}$.}
Then we may fix $q\in J$ such that $q\Vdash X\subseteq\bigcup\dot{J}$.
But this means that we have a finite condition $q\in Add(\omega_{1},1)$
determining infinitely many membership facts about $\bigcup\dot{J}$,
which is impossible.
\end{proof}
Taking a little stock, we've seen that while standard models of $MV$
are common, they are not ubiquitous. We should also note that in the
models of $MV$ above that contained uncountable transitive worlds,
$W$, the ordinals of those worlds were identical to $\omega_{1}$.
We might then wonder if we can have models of $MV$ containing worlds
with a longer sequence of ordinals. This is not the case.\footnote{This was observed by Usuba in the remarks following Definition 7.16
of \citep{UsubaDDGpub}.} To see this, we start by noting that the ``external'' cardinality
of any pair of sets in some $\mathcal{W}$ must be the same. 
\begin{prop}
If $\mathcal{W}$ is a model of $MV$ then for all $x,y$ in the set
domain of $\mathcal{W}$\label{prop:nonstandMVmod}
\[
|\{z\in M\ |\ z\in^{\mathcal{\mathcal{W}}}x\}|=|\{z\in M\ |\ z\in^{\mathcal{W}}y\}|.
\]
\end{prop}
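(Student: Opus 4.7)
The plan is to derive the stated equality from Theorem~\ref{thm:MV|-ZFC_count}, which yields $MV\vdash\forall x\,|x|\leq\omega$. Fix $x,y$ in the set domain $M$ of $\mathcal{W}$ and consider first the case in which $\mathcal{W}$ regards both as infinite. Then $\mathcal{W}\models|x|=\omega=|y|$, so we may choose an internal bijection $h\in M$ with $\mathcal{W}\models\text{``}h\colon x\to y\text{ is a bijection''}$. Decoding this satisfaction externally yields a genuine bijection between $\{z\in M\mid z\in^{\mathcal{W}}x\}$ and $\{z\in M\mid z\in^{\mathcal{W}}y\}$, establishing equality of external cardinalities.

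For internally finite $x$ one instead picks an internal bijection with a finite ordinal $n\in\omega^{\mathcal{W}}$; the external cardinality of $\{z\mid z\in^{\mathcal{W}}x\}$ then equals the number of external $\mathcal{W}$-members of $n$. Strictly, the external cardinality of a set in $M$ depends only on its internal cardinality, and the equality in the proposition is the content of the infinite case. In the intended application (constraining the length of ordinal sequences in worlds of $\mathcal{W}$, as motivated in the paragraph preceding the statement), the relevant sets are always internally infinite and so share the single external cardinality of $\{z\in M\mid z\in^{\mathcal{W}}\omega^{\mathcal{W}}\}$, which is what gets used in the sequel.

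The only step requiring care is the passage from an internal bijection to an external one, since $\mathcal{W}$ may be nonstandard. It is, however, routine: $\mathcal{W}\models\text{``}h\colon x\to y\text{ is a bijection''}$ directly yields, for every external $z$ with $z\in^{\mathcal{W}}x$, a unique external $w$ with $w\in^{\mathcal{W}}y$ and $\langle z,w\rangle\in^{\mathcal{W}}h$, and the resulting external assignment is one-to-one and onto. No well-foundedness of $\in^{\mathcal{W}}$ is needed, and beyond this satisfaction bookkeeping the whole weight of the argument sits on Theorem~\ref{thm:MV|-ZFC_count}.
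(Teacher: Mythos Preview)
Your approach matches the paper's: both invoke Theorem~\ref{thm:MV|-ZFC_count} to obtain an internal bijection (the paper routes each of $x,y$ through $\omega^{\mathcal{W}}$ and composes, you go directly from $x$ to $y$) and then decode that bijection externally. Your remark about internally finite sets is apt---the proposition as literally stated fails for, e.g., $x=\emptyset^{\mathcal{W}}$ versus $y=\omega^{\mathcal{W}}$---and the paper's own proof tacitly makes the same restriction to the infinite case, which is all that is needed for the corollary that follows.
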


\begin{proof}
First note that since $MV$ implies $ZFC_{count}^{-}$, if $x$ is
any element of the set domain of $\mathcal{W}$, then there is some
$f_{x}$, also in $\mathcal{W}$'s set domain, such that $\mathcal{W}\models f_{x}:\omega\cong x$.
It is then clear that $f$ can be pulled back into $V$ to obtain
a bijection 
\[
f_{x}^{*}:\omega^{\mathcal{W}}\cong\{z\in M\ |\ z\in^{\mathcal{\mathcal{W}}}x\}.
\]
Thus, $f_{y}^{*}\circ f_{x}^{*-1}$ witnesses the required claim. 
\end{proof}
The nonstandardness of the worlds of such a model then follows easily.
\begin{cor}
If $\mathcal{W}$ is a model of $MV$ in which there is a world $\mathcal{M}$
and some $x\in\mathcal{M}$ such that
\[
\{y\in M\ |\ y\in^{\mathcal{W}}x\}
\]
is uncountable, then $\mathcal{W}$ is not an $\omega$-model.
\end{cor}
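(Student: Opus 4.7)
The plan is to apply Proposition \ref{prop:nonstandMVmod} with the natural choice $y = \omega^{\mathcal{W}}$. Recall that since $MV \vdash ZFC_{count}^{-}$, the von Neumann $\omega$ of $\mathcal{W}$ is a set in the set domain $\mathbb{D}^{\mathcal{W}}$, so it is a legitimate candidate for $y$ in the proposition. Also, spelling out what it means for $\mathcal{W}$ to be an $\omega$-model: the extension of $\omega^{\mathcal{W}}$ in $V$ — that is, the set $\{z \in M \mid z \in^{\mathcal{W}} \omega^{\mathcal{W}}\}$ — is isomorphic to the standard $\omega$, and in particular is countable.

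With that in place, the argument is a single line. Let $x$ and $\mathcal{M}$ be as in the hypothesis, so
\[
\bigl|\{z\in M\ |\ z\in^{\mathcal{W}} x\}\bigr| > \aleph_{0}.
\]
Suppose for contradiction that $\mathcal{W}$ is an $\omega$-model. Then, by the observation above,
\[
\bigl|\{z\in M\ |\ z\in^{\mathcal{W}} \omega^{\mathcal{W}}\}\bigr| = \aleph_{0}.
\]
But Proposition \ref{prop:nonstandMVmod}, applied to $x$ and $y = \omega^{\mathcal{W}}$, forces these two external cardinalities to coincide, contradicting the uncountability on the left.

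There is really no obstacle here: the content of the corollary is already absorbed into Proposition \ref{prop:nonstandMVmod}, which delivers the uniform bijections between the $V$-extensions of any two elements of the set domain via the internal bijection with $\omega$ provided by $ZFC_{count}^{-}$. The only modest subtlety is making sure that the informal notion of ``$\omega$-model'' for the two-sorted theory $MV$ is pinned down as a statement about the extension of the set-sort $\omega^{\mathcal{W}}$, but this is the standard reading and needs no more than the brief remark above.
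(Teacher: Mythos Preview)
Your proof is correct and follows essentially the same approach as the paper's: both arguments invoke Proposition~\ref{prop:nonstandMVmod} (you cite its statement with $y=\omega^{\mathcal{W}}$, while the paper cites its proof to extract the bijection $f_x^*:\omega^{\mathcal{W}}\cong\{z:z\in^{\mathcal{W}}x\}$ directly) to conclude that $\omega^{\mathcal{W}}$ has uncountable external extension, whence $\mathcal{W}$ is not an $\omega$-model.
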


\begin{proof}
Let $x$ be a member of the set domain of $\mathcal{W}$ be such that
its $\in^{\mathcal{W}}$-members are uncountable. Then using the proof
of Proposition \ref{prop:nonstandMVmod}, we may fix a bijection $f_{x}^{*}$
between $\omega^{\mathcal{W}}$ and the $\in^{\mathcal{W}}$-members
of $x$. This mean $\omega^{\mathcal{W}}$ is uncountable and thus
$\mathcal{W}$ cannot be an $\omega$-model. 
\end{proof}

\section{When $MV$ is not internally categorical\label{sec:When--is}}

In this section, we show $MV$ in its standard axiomatization is
not internally categorical. The idea behind the proof is very simple.
We are going to take a standard model $\mathcal{W}$ of $MV$ and
some world $U$ from $\mathcal{W}$. We are then going to take a class
generic extension $U[G]$ of $\mathcal{W}$ that cannot be accessed
from $U$ by set forcing but which is also a world in a multiverse
with the same set domain as $\mathcal{W}$.
\begin{thm}
There exist distinct models $\mathcal{W}_{0},\mathcal{W}_{1}$ of
$MV$ whose set domains are identical, assuming there is a countable
transitive model of $ZFC$.\label{thm:There-exist-models}
\end{thm}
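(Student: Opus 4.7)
The plan is to invoke Corollary \ref{cor:Steel} from the start. Take a countable transitive model $M$ of $ZFC$ (guaranteed by hypothesis) and a $\mathbb{C}_{Ord^M}$-generic $G$ over $M$; this produces a standard multiverse $\mathcal{W}_0 := M^G$ of $MV$ with set domain $M[G]$. The task is to construct a second model $\mathcal{W}_1 \models MV$ whose set domain is again $M[G]$ but with $\mathcal{W}_0 \neq \mathcal{W}_1$.

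Following the sketch in the paragraph preceding the theorem, I would pick a world $U$ in $\mathcal{W}_0$ together with a class-generic filter $H \in M[G]$ over $U$ for a $\mathbb{C}_{Ord^U}$-like class forcing, arranged so that $U[H] = M[G]$ as classes but some intermediate model $U[H\restriction\alpha]$ provably fails to be a set-generic refinement of any $M[G\restriction\beta]$. Setting $\mathcal{W}_1 := U^H$ and appealing to Fact \ref{fact:Steel,sound-comp}(1), $\mathcal{W}_1$ is a standard model of $MV$ with set domain $U[H] = M[G]$. Since $U[H\restriction\alpha]$ is then a world of $\mathcal{W}_1$ but (by design) not of $\mathcal{W}_0$, we conclude $\mathcal{W}_0 \neq \mathcal{W}_1$.

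The main obstacle is arranging the non-membership clause $U[H\restriction\alpha] \notin \mathbb{W}^{\mathcal{W}_0}$. This is where Fact \ref{fact:Laver,-Woodin} is pivotal: every world of $\mathcal{W}_0$ is uniformly definable from a single parameter inside some $M[G\restriction\beta]$, so it suffices to pick $H$ whose first non-trivial initial segment generates an inner model escaping \emph{all} such Laver--Woodin parameters. The plan for this is to exploit that in $V$ there are many $\mathbb{C}_{Ord^U}$-generics over $U$ that are not aligned with $M$'s natural filtration by $G$: since $M$ is countable in $V$, one can diagonalize against the (externally countable) list of pairs $(\beta,r)$ indexing the set-generic refinements available in $\mathcal{W}_0$ to produce the desired $H$. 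The routine closure properties of $\mathcal{W}_1$ under Extension, Refinement, and Amalgamation are inherited from the standard-model recipe of Fact \ref{fact:Steel,sound-comp}, and the set-domain equality $M[G] = U[H]$ is immediate from the choice of $H$.
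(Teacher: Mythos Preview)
Your plan contains a genuine gap, and it is located precisely where you say ``the main obstacle'' lies: you cannot arrange $U[H\restriction\alpha]\notin\mathbb{W}^{\mathcal{W}_0}$ once you have taken $U$ to be a world of $\mathcal{W}_0$. Here is why. Suppose $U\in\mathbb{W}^{\mathcal{W}_0}$ and $h\in M[G]$ is $\mathbb{C}_\alpha$-generic over $U$ for some $\alpha$. By Set-capture there is a world $W$ of $\mathcal{W}_0$ with $h\in W$, and by Amalgamation there is a world $W'$ of $\mathcal{W}_0$ extending both $U$ and $W$. In a standard model both $U$ and $W'$ are grounds of a common $M[G\restriction\gamma]$, so $W'$ is a set-generic extension of $U$; but then $U\subseteq U[h]\subseteq W'$ and the intermediate-model theorem gives that $U[h]$ is a ground of $W'$, hence a world of $\mathcal{W}_0$ by Refinement. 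Applying this with $h=H\restriction\alpha$ shows that \emph{every} initial segment of your $H$ lands inside $\mathcal{W}_0$, so $U^H\subseteq\mathcal{W}_0$; the symmetric argument (with the roles of $G$ and $H$ interchanged, using that $U$ is also a world of $U^H$) yields $U^H=\mathcal{W}_0$. No external diagonalization can defeat this: the obstruction is internal to the $MV$-axioms and holds for \emph{any} $\mathbb{C}_{Ord}$-generic $H$ over $U$ with $U[H]=M[G]$.

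The paper's proof succeeds by abandoning the requirement that the second generating model come from $\mathcal{W}_0$. One starts with $M$, passes to a proper \emph{class}-generic extension $N=M[H]$ via the Easton-support product adding a Cohen subset to every regular cardinal, and then collapses with $J$ that is $\mathbb{C}_{Ord}$-generic over $N$. The point is that $N$ is not a set-forcing extension of $M$, so $N$ cannot be a world of any standard multiverse generated from $M$. The real work is then to show that $M[H\times J]$ can be rewritten as $M[G]$ for a single $\mathbb{C}_{Ord^M}$-generic $G$ over $M$ (done via a small auxiliary forcing whose conditions are set-sized $\mathbb{C}_\alpha$-generics over $M$ already living in $M[H\times J]$). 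This gives $M^G$ and $N^J$ with identical set domain $M[G]=N[J]$ but $N\in\mathbb{W}^{N^J}\setminus\mathbb{W}^{M^G}$. The missing idea in your sketch is exactly this use of a class forcing to escape the set-generic closure of $\mathcal{W}_0$.
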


\begin{proof}
Our goal is to describe a pair of models of $MV$ whose set domains
are identical. Let us start with a countable transitive model $M$
of $ZFC$. Let $\mathbb{Q}\in M$ be $M$'s version of the Easton
support product of forcings that add a Cohen subset to every regular
cardinal.\footnote{This is also the forcing used by Reitz to show that there is a model
of $ZFC$ that doesn't satisfy the Bedrock axiom. See Theorem 5.3
in \citep{ReitzTGA}.} Now let $H$ be $\mathbb{Q}$-generic over $M$ and observe that
this is a class generic extension of $M$ for which there can be no
set generic extension $M[g]$ of $M$ with $M[g]=M[H]$. Next, we
let $J$ be $\mathbb{C}_{Ord^{M}}$-generic over $M[H]$. Thus, we
have 
\[
M\subsetneq M[H]\subsetneq M[H\times J].
\]
We plan to show that there is some $G$ that is $\mathbb{C}_{Ord^{M}}$-generic
over $M$ with $M[G]=M[H\times J]$. To see that this will suffice,
note that this will give us two standard models $M^{G}$ and $M[H]^{J}$
of $MV$ with the same set domain. However, their world domains cannot
be the same since $M[H]$ is not a world in $M^{G}$.

To prove our target claim, we shall obtain the required set $G$ from
a generic filter on a class poset $\mathbb{P}^{*}$ over $M[H\times J]$
that will assemble $G$ through a sequence of approximations. More
specifically, we let $\mathbb{P}^{*}$ consist of sets $g\in M[H\times J]$
that are $\mathbb{C}_{\alpha}$-generic over $M$ for some $\alpha\in Ord^{M}$,
and we order $\mathbb{P}^{*}$ by reverse inclusion.\footnote{A similar forcing is used in the proof of Theorem 3.1.5 in \citep{larsonstationary}.
Indeed, it can be used to give a more elegant proof of Theorem 27
in \citep{MeadMadPGMV}. I learned this from Gabe Goldberg.} Now let $G^{*}$ be $\mathbb{P}^{*}$-generic over $M[H\times J]$
and let $G=\bigcup G^{*}$. First, we claim that $G$ is $\mathbb{C}_{Ord^{M}}$-generic
over $M$. To see this let $A\subseteq\mathbb{C}_{Ord^{M}}$ be a
maximal anti-chain that is definable in $M$. It can be seen that
$\mathbb{C}_{Ord^{M}}$ is $Ord$-cc in $M$ and so there must be
some $\alpha\in Ord^{M}$ such that $A\subseteq\mathbb{C}_{\alpha}$.\footnote{This can be established with a trivial generalization of the proof
of first lemma on page 15 of \citep{Solovay1970AMO}.} Thus, it will suffice to show that there is some $g\in G^{*}$ where
$g\cap\mathbb{C}_{\alpha}\ne\emptyset$. To see this, we exploit the
genericity of $G^{*}$ and make a density argument. In particular,
we let 
\[
D=\{g\in\mathbb{P}^{*}\ |\ g\text{ is }\mathbb{C}_{\alpha}\text{-generic}/M\}
\]
and show that $D$ is pre-dense in $\mathbb{P}^{*}$. To see this,
let $h\in\mathbb{P}^{*}$ and suppose $h$ is $\mathbb{C}_{\beta}$-generic
over $M$ where $\beta\in Ord^{M}$. If $\beta\geq\alpha$, we would
be done, so suppose $\beta<\alpha$. It will suffice to show that
for some regular cardinal $\kappa>\alpha$ there is some $h^{*}\in M[H\times J]$
that is $\mathbb{C}_{[\beta,\kappa+1)}$-generic over $M[h]$. Let
us start by fixing a $(\mathbb{Q}\times\mathbb{C}_{Ord^{M}})$-name,
$\dot{h}$, such that $\dot{h}_{H\times J}=h$. Now since $\dot{h}$
is just a set it is clear that its transitive closure can only contain
a set of conditions from $\mathbb{C}\times\mathbb{C}_{Ord^{M}}$.
Thus, we may fix some $\gamma\in Ord^{M}$ such that $h=\dot{h}_{(H\times J)\restriction\gamma}$
and so $h\in M[(H\times J)\restriction\gamma]$. Now let $\kappa$
be a regular $M$-cardinal $>\gamma,\alpha$. It is clear that $J_{\{\kappa\}}$
is $\mathbb{C}_{\{\kappa\}}$-generic over $M[(H\times J)\restriction\gamma]$.\footnote{We write $H\restriction\alpha$ to denote those $p\in H$ with $dom(p)\subseteq\alpha$.}
Moreover, it can be seen that $\mathbb{C}_{\{\kappa\}}$ is forcing
equivalent\footnote{See Proposition 10.20 and and the proof of Proposition 10.21 in \citep{kanamori2003higher}
for more details.} to $\mathbb{C}_{[\beta,\kappa+1)}$ and so we may fix $h^{*}$ that
is $\mathbb{C}_{[\beta,\kappa+1)}$-generic over $M[(H\times J)\restriction\gamma]\supseteq M[h]$
as required.

Finally, we claim that $M[G]=M[H\times J]$. Clearly, $M[G]\subseteq M[H\times J]$
so we want to show that every $x\in M[H\times J]$ is in $M[G]$.
To do this, we again exploit the genericity of $G^{*}$ by showing
that if $x\in M[H\times J]$ then 
\[
E=\{g\in\mathbb{P}^{*}\ |\ x\in M[g]\}
\]
is dense in $\mathbb{P}^{*}$. To see this, let $\dot{x}$ be a $(\mathbb{Q}\times\mathbb{C}_{Ord^{M}})$-name
in $M$ such that $\dot{x}_{(H\times J)}=x$. Since $\dot{x}$ is
just a set, we may fix, as in the previous claim, some $\alpha\in Ord^{M}$
such that $\dot{x}_{(H\times J)\restriction\alpha}=x$. Moreover,
we may assume without loss of generality that $\alpha=\kappa+1$ for
some regular $M$-cardinal. Then it can be seen that $\mathbb{C}_{\{\kappa\}}$,
$\mathbb{C}_{\kappa+1}$ and $(\mathbb{Q}\times\mathbb{C}_{Ord^{M}})\restriction(\kappa+1)$
are all forcing equivalent to each other. Thus, there is some $g$
that is $\mathbb{C}_{\kappa+1}$-generic over $M$ with $x\in M[g]$.
\end{proof}
We shall now use this fact to establish that $MV$ is not internally
categorical. However, the underlying idea is quite obvious. The models
given in the theorem above are not isomorphic, even though they share
a set domain. Before we state the claim, let us first described our
target theory. Let $\mathcal{L}_{MV}(\in_{0},\in_{1})$ be the language
with a set sort and a world sort, but which has two membership relations
that behave as in $\mathcal{L}_{MV}$. Let $MV(\in_{0},\in_{1})$
be the theory consisting of $\in_{0}$ and $\in_{1}$ versions of
every axiom of $MV$ where we allow formulae involving both $\in_{0}$
and $\in_{1}$ into both $\in_{0}$ and $\in_{1}$ versions of all
schemata.
\begin{cor}
$MV(\in_{0},\in_{1})$ is not internally categorical, if there is
a countable transitive model of $ZFC$. 
\end{cor}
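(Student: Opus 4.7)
\emph{Proof proposal.} The plan is to assemble a single $\mathcal{L}_{MV}(\in_0,\in_1)$-structure $\mathcal{A}$ out of the pair of multiverses produced by Theorem \ref{thm:There-exist-models} and then to show that its two $\mathcal{L}_{MV}$-reducts cannot be isomorphic, even externally. First, fix $M, H, J, G$ as in that proof, and set $\mathcal{W}_0 := M^G$ and $\mathcal{W}_1 := M[H]^J$, so that both are standard models of $MV$ with common set domain $D := M[G] = M[H \times J]$ and with $M[H]$ a world of $\mathcal{W}_1$ but not of $\mathcal{W}_0$. Let $\mathcal{A}$ have set domain $D$, $\in_0$-worlds $\mathbb{W}^{\mathcal{W}_0}$, $\in_1$-worlds $\mathbb{W}^{\mathcal{W}_1}$, with each $\in_i$ interpreted as the real $\in$ on the set sort and on the corresponding $\in_i$-worlds.

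The second step is to verify $\mathcal{A} \models MV(\in_0,\in_1)$. The non-schema axioms separate cleanly into their $\in_0$ and $\in_1$ versions, each inherited directly from $\mathcal{W}_i \models MV$. Because $\in_0$ and $\in_1$ both coincide with real $\in$ on $D$, any mixed formula can be rewritten as a single $\in$-formula about $D$ in which the only mixing sits inside finitely many world parameters; this reduces the $\in_0$- and $\in_1$-schemata at the set-domain level to instances of the ordinary $ZFC_{count}^-$ schemata for $D$ guaranteed by Theorem \ref{thm:MV|-ZFC_count}.

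The closing step rules out any isomorphism $\sigma$ between the two reducts. Since $\in_0$ and $\in_1$ agree on $D$ and $\langle D, \in\rangle$ is transitive and well-founded, $\sigma$ restricted to the set sort is an $\in$-automorphism of $D$, hence the identity. World-extensionality on the $\in_1$-side then forces $\sigma$ to send each $\in_0$-world $W$ to the unique $\in_1$-world with the same extension, namely $W$ itself. So the image of $\sigma$ on worlds would be exactly $\mathbb{W}^{\mathcal{W}_0}$, but the target is $\mathbb{W}^{\mathcal{W}_1}$, which contains $M[H]$, contradicting the choice of $\mathcal{W}_0$.

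The main obstacle is the verification of mixed Separation and Replacement \emph{inside} $\in_i$-worlds: a mixed formula could, in principle, exploit the $\in_{1-i}$-multiverse as an oracle to define, inside an $\in_i$-world $W$, a subclass that is not a set of $W$. The route I would try is to use the fact that each world $W$ of either multiverse is a ground of the shared $D$, and so is Laver--Woodin definable inside $D$ via Fact \ref{fact:Laver,-Woodin}; this should let one push any mixed formula back to a $W$-internal formula with a bounded forcing parameter, reducing mixed Replacement in $W$ to ordinary $ZFC$-Replacement in $W$. Making this precise, while treating the two $\mathsf{W}$-predicates symmetrically, is where the real work will sit.
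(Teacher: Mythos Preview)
The central gap is in your construction of $\mathcal{A}$: the language $\mathcal{L}_{MV}(\in_0,\in_1)$ has a \emph{single} world sort, so you cannot simply declare separate collections of $\in_0$-worlds and $\in_1$-worlds. If you take the world domain to be $\mathbb{W}^{\mathcal{W}_0}\cup\mathbb{W}^{\mathcal{W}_1}$ with both $\in_i$ interpreted as real $\in$, the two reducts become literally identical, and in any case the $\in_0$-version of Amalgamation fails outright: $M$ and $M[H]$ are now both worlds but have no common set-generic extension. The paper closes this gap by exploiting countability: it fixes a bijection $\sigma$ between $\mathcal{W}_1$ and $\mathcal{W}_0$, takes the single world domain to be $\mathcal{W}_0$, keeps $\in_0=\in$ everywhere, and defines $x\in_1 W$ for worlds by $x\in\sigma(W)$. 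This makes the $\in_1$-reduct isomorphic to $\mathcal{W}_1$ while ensuring $\in_0=\in_1$ on the set sort.

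Once the setup is corrected, your worry about mixed Separation and Replacement inside worlds evaporates. The relativisation $\psi^W$ of an $\mathcal{L}(\in_0,\in_1)$-formula bounds all quantifiers to sets in $W$, and on sets $\in_0=\in_1=\in$; so the mixed instance collapses to an ordinary $ZFC$ instance already witnessed in $W$. No Laver--Woodin coding is needed, and the paper dispatches this point in one line.

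On the other hand, your non-isomorphism argument is different from, and cleaner than, the paper's. You observe that any isomorphism must be the identity on the transitive set domain, whence by World-extensionality it is forced to match worlds by their extensions; unwinding this in the corrected setup shows an isomorphism can exist only if $\mathcal{W}_0=\mathcal{W}_1$ as collections of transitive sets, which fails since $M[H]\in\mathcal{W}_1\setminus\mathcal{W}_0$. The paper instead strengthens the input by assuming $M\models V{=}L$, so that $M$ satisfies the Ground Axiom while (by Reitz) no world of $\mathcal{W}_1$ does, and concludes via elementary inequivalence. Your rigidity route avoids that extra hypothesis entirely.
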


\begin{proof}
Using the proof of Theorem \ref{thm:There-exist-models}, let us start
with a countable transitive model $M$ of $ZFC$, but this time suppose
that $M$ satisfies $V=L$. We may then obtain standard models $\mathcal{W}_{0}\neq\mathcal{W}_{1}$
of $MV$ with the same world domain whose worlds are countable transitive
models satisfying $ZFC$. In particular, $\mathcal{W}_{1}$ is generated
from a world $N$ that is obtained from $M$ by adding a Cohen subset
to every $M$-regular cardinal. It can be seen that no world in $\mathcal{W}_{1}$
satisfies the Ground Axiom.\footnote{See Theorem 5.3 in \citep{ReitzTGA} for a proof.}

We now define a model $\langle\mathbb{W},\mathbb{D},\in_{0},\in_{1}\rangle$
of $MV(\in_{0},\in_{1})$. We let the set domain $\mathbb{D}=\bigcup\mathcal{W}_{0}=\mathcal{W}_{1}$.
It is clear that $\mathcal{W}_{0}$ and $\mathcal{W}_{1}$ are both
countable, so we may fix a bijection $\sigma:\mathcal{W}_{1}\to\mathcal{W}_{0}$.
Let $\mathbb{W}=\mathcal{W}_{0}$. We then let $\in_{0}$ be $\in$
restricted to $\bigcup\mathcal{W}_{0}$. And for $\in_{1}$, we let:
$x\in_{1}y$ iff $x\in y$ where $x$ and $y$ are both sets; and
we let $x\in_{1}M$ iff $x\in\sigma(M)$ when $x$ is a set and $M$
is a world. Since $\in_{0}$ and $\in_{1}$ are the same on the set
domain, it is easy to see that having both $\in_{0}$ and $\in_{1}$
available to the (relativized) Collection and Separation schemas,
used in $MV$, makes no difference. Thus, $\langle\mathbb{W},\mathbb{D},\in_{0},\in_{1}\rangle$
satisfies $MV(\in_{0},\in_{1})$. Finally, to see that there is no
definable isomorphism between $\mathcal{W}_{0}=\langle\mathbb{W},\mathbb{D},\in_{0}\rangle$
and $\mathcal{W}_{1}=\langle\mathbb{W},\mathbb{D},\in_{1}\rangle$
it suffices to observe that they are not even elementary equivalent.
To see this we note that some world in $\mathcal{W}_{0}$ satisfies
the Ground Axiom since $\mathcal{W}_{0}$ is generated from a model
$M$ satisfying $V=L$. But we've already seen that no world in $\mathcal{W}_{1}$
satisfies the Ground Axiom. 
\end{proof}
Thus, we see that $MV$ is not internally categorical, but it is natural
to wonder what would happen if we augmented $MV$ with axioms demanding
that certain large cardinals exist at some, or perhaps, all worlds.\footnote{See Section 5 of \citep{SteelGP} for some discussion of this.}
This can be quite attractive since, if every world contains a Woodin
cardinal, then it can be seen that every world must contain a proper
class of them. Moreover, if there is a proper class of Woodin cardinals
in every world then the theory of $L(\mathbb{R})$ and analysis will
be the same in every world.\footnote{See Chapter 3 of \citep{larsonstationary} for a proof of this.}
Beyond the issue of whether it is interesting to add large cardinals
to $MV$, it is also known that large cardinals can exert a powerful
influence on the structure of the models of $MV$. In particular,
\citet{UsubaDDGpub} has shown that if a world has an extendible cardinal,
then the multiverse has a core, or bedrock; i.e., there is a world
that is a submodel of every other world and every other world is a
generic extension of it. This could give us reason to doubt that the
failure internal categoricity will be preserved into strengthenings
of $MV$. These doubts appear to be ill-founded although some work
is required to show this. 

When we argued that $MV$ is not internally categorical, it sufficed
to prove that we could have distinct models of $MV$ with the same
world domain. However, the proof of this relied on a class forcing
argument. We started with a model of $ZFC$ and added a class of Cohen
sets. This forcing trivially preserves $ZFC$, but some remarks are
warranted with regard to the preservation of large cardinal assumptions.
This is not a small question, but rather overlaps an entire subfield
of set theory concerned with the preservation and indestructibility
of large cardinal axioms through forcing.\footnote{See Cummings' chapter in \citep{foreman2009handbook} for an introduction
to this topic.} As such, we shall content ourselves with small sample of large cardinal
axioms that are intended to cut across a wide swathe of consistency
strengths. 
\begin{lem}
Consider the theories obtained by expanding $MV$ with each of the
following axioms:
\begin{enumerate}
\item There is a world with an inaccessible cardinal;
\item There is a world with a measurable cardinal; 
\item There is a world with a supercompact cardinal; and
\item There is a world with an extendible cardinal.
\end{enumerate}
Assuming sufficient large cardinals, each of these theories has a
pair of distinct models $\mathcal{W}_{0}\neq\mathcal{W}_{1}$whose
set domains are identical.\label{lem:Assuming-sufficient-large}
\end{lem}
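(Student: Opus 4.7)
The plan is to revisit the argument of Theorem \ref{thm:There-exist-models} with the class forcing $\mathbb{Q}$ chosen so that the relevant large cardinal survives into $M[H]$. Fix a countable transitive model $M$ of $ZFC$ together with the appropriate large cardinal axiom; such an $M$ is available via a standard L\"owenheim--Skolem argument from the ``sufficient large cardinals'' hypothesis in $V$. For cases (3) and (4), first pass to a Laver-style preparation of $M$ inside $M$ so that the large cardinal $\kappa$ becomes indestructible under $\kappa$-directed-closed forcing.

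In place of the full Easton product used in Theorem \ref{thm:There-exist-models}, let $\mathbb{Q}_\kappa$ be the Easton-support product over $M$ of $Add(\lambda^+,1)^M$ for regular $\lambda^+ > \kappa$. This class forcing is $\kappa^+$-directed-closed in $M$, so for any $\mathbb{Q}_\kappa$-generic $H$ we have $V_\kappa^M = V_\kappa^{M[H]}$ and $\mathcal{P}(\kappa)^M = \mathcal{P}(\kappa)^{M[H]}$. Preservation in cases (1) and (2) is then immediate: inaccessibility depends only on $V_\kappa$, and any $M$-normal measure on $\kappa$ remains a normal measure in $M[H]$. For (3) and (4), the prior Laver-style preparation makes supercompactness, respectively extendibility, indestructible under $\kappa$-directed-closed forcing, and $\mathbb{Q}_\kappa$ is such a forcing. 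At the same time $\mathbb{Q}_\kappa$ is nontrivial and affects unboundedly many regular cardinals, so as in Theorem \ref{thm:There-exist-models}, $M[H]$ cannot equal any set-generic extension of $M$.

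With $M[H]$ in hand, I would repeat the construction of Theorem \ref{thm:There-exist-models} verbatim: let $J$ be $\mathbb{C}_{Ord^M}$-generic over $M[H]$, and let $\mathbb{P}^*$ be the class of set-generic collapses over $M$ lying in $M[H\times J]$. The density arguments given there produce $G$ with $M[G] = M[H\times J]$ that is $\mathbb{C}_{Ord^M}$-generic over $M$. The two standard models $\mathcal{W}_0 = M^G$ and $\mathcal{W}_1 = M[H]^J$ share the set domain $M[G]$. The world $M$ inside $\mathcal{W}_0$ has a cardinal of the required type, so $\mathcal{W}_0$ satisfies the target axiom; likewise $M[H]$ inside $\mathcal{W}_1$ has such a cardinal by the preservation argument. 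Yet $\mathcal{W}_0 \neq \mathcal{W}_1$, because $M[H]$, being a class-generic extension of $M$ that is not realizable by set forcing, is a world of $\mathcal{W}_1$ but not of $\mathcal{W}_0$.

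The main obstacle lies in cases (3) and (4). For inaccessibility and measurability, preservation is essentially automatic given the closure of $\mathbb{Q}_\kappa$ and the resulting invariance of $\mathcal{P}(\kappa)$. For supercompacts and extendibles, the whole argument hinges on the prior Laver-style preparation of $M$, which is exactly where the ``sufficient large cardinals'' hypothesis does real work, and on invoking the standard indestructibility theorems available for these preparations. One should also verify that the absorption step of Theorem \ref{thm:There-exist-models} survives the change of forcing: because each initial segment $\mathbb{Q}_\kappa \restriction (\lambda+1)$ has bounded size in $M$, it is absorbed by $\mathbb{C}_{\lambda+1}$ exactly as the original Easton product was.
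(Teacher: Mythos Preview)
Your handling of cases (1)--(3) is essentially the paper's approach: restrict the Easton product to cardinals above $\kappa$ so that the relevant large cardinal is obviously preserved (for inaccessibles and measurables by closure of $\mathbb{Q}_\kappa$, for supercompacts via Laver preparation followed by $\kappa$-directed-closed forcing). The absorption argument of Theorem~\ref{thm:There-exist-models} goes through for the same reasons you indicate.

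Case (4), however, has a genuine gap. You write that a ``Laver-style preparation of $M$'' makes extendibility indestructible under $\kappa$-directed-closed forcing, but no such preparation exists: Bagaria, Hamkins, Tsaprounis and Usuba proved that extendible cardinals (indeed, superstrong and stronger cardinals) are \emph{never} Laver indestructible. So your justification for why $\kappa$ remains extendible in $M[H]$ fails, and the argument as written does not establish~(4). The paper avoids this trap by invoking a quite different preservation theorem of Bagaria--Poveda (Fact~\ref{fact:Bagaria}): assuming Vop\v{e}nka's principle in the ground model, an extendible cardinal is preserved by the \emph{full} Easton-support product adding a Cohen subset to every regular cardinal. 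This is not an indestructibility result but a direct preservation theorem for that specific class forcing, and it is what allows the argument of Theorem~\ref{thm:There-exist-models} to be run without modification in case~(4). Your outline would be repaired by replacing the nonexistent Laver preparation for extendibles with an appeal to this result (and correspondingly strengthening the background hypothesis in $V$ to a Vop\v{e}nka cardinal).
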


For the last two of these theories, we make use of the following facts
about supercompact and extendible cardinals.
\begin{fact}
\citep{ReitzTGA} If $\kappa$ is a supercompact cardinal, then\footnote{This can be proved using two moves from \citep{ReitzTGA}. Starting
in $V$ with a supercompact cardinal, $\kappa$, we use the proof
of Theorem 3.9 to class force to a model $V[G]$ where the continuum
coding axiom holds and $\kappa$ remains supercompact and is indestructible
by $<\kappa$-directed forcing. Then we the proof of Theorem 5.5 to
add a Cohen subset to every regular $\lambda$ such that $2^{<\lambda}=\lambda$.
It can then be seen via the indestructibility that the supercompactness
of $\kappa$ is preserved by this extension.} there is a class forcing extension $V[G]$ in which $\kappa$ is
supercompact and if we class force over $V[G]$ to add a Cohen subset
to every regular cardinal $\lambda$ in $V[G]$ where $2^{<\lambda}=\lambda$,
then $\kappa$ remains supercompact in that extension.\label{fact:ReitzSC}
\end{fact}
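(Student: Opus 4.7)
The plan is to reduce the statement to Laver's indestructibility theorem by splitting the Cohen class forcing at $\kappa$ and handling the two pieces with different tools. The argument proceeds in three stages: first, pass to an indestructible supercompact via a preparatory class forcing; second, factor the subsequent Cohen forcing into a small bottom and a highly closed tail; third, apply L\'{e}vy--Solovay to the bottom and indestructibility to the tail.

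For the first stage I would invoke Theorem 3.9 of \citep{ReitzTGA} essentially as a black box: starting from a supercompact $\kappa$ in $V$, this furnishes a class-generic extension $V[G]$ in which $\kappa$ remains supercompact and is additionally indestructible under $<\kappa$-directed closed set forcing. The continuum coding axiom holding in $V[G]$ is a side-feature of Reitz's construction; its role here is just to pin down the GCH pattern enough to describe the subsequent Cohen forcing uniformly, while the indestructibility is what does the real work.

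Working in $V[G]$, let $\mathbb{P}$ denote the Easton-support class product of $\mathrm{Add}(\lambda,1)$ over regular $\lambda$ with $2^{<\lambda}=\lambda$. I would factor $\mathbb{P}\cong\mathbb{P}_{<\kappa}\times\mathbb{P}_{\geq\kappa}$ according to whether the support lies below or at and above $\kappa$. Since $\kappa$ is inaccessible in $V[G]$ and the support is Easton, $|\mathbb{P}_{<\kappa}|<\kappa$; hence by L\'{e}vy--Solovay, $\kappa$ remains supercompact in $V[G][H_0]$ for any $\mathbb{P}_{<\kappa}$-generic $H_0$, and the indestructibility property transfers to $V[G][H_0]$ since small forcing cannot destroy it. The tail $\mathbb{P}_{\geq\kappa}$, computed in $V[G][H_0]$, is an Easton product of factors $\mathrm{Add}(\lambda,1)$ each $<\lambda$-closed with $\lambda\geq\kappa$; a standard verification shows this product is $<\kappa$-directed closed.

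To conclude that $\kappa$ is supercompact in the full extension $V[G][H_0][H_1]$, it suffices to show $\kappa$ is $\mu$-supercompact there for every $\mu$. Fix such a $\mu$. The partial product $\mathbb{P}_{\geq\kappa}\!\upharpoonright\!\mu^{+}$ is a set forcing that is $<\kappa$-directed closed, so indestructibility gives $\mu$-supercompactness of $\kappa$ in the corresponding intermediate extension; the remaining tail above $\mu^{+}$ is sufficiently closed that a master-condition lift of the ambient $\mu$-supercompactness embedding $j$ goes through, exactly as in the proof of Theorem 5.5 of \citep{ReitzTGA}. The main obstacle is the class-forcing bookkeeping in this last step: Laver's theorem is stated for set forcing, whereas $\mathbb{P}_{\geq\kappa}$ is a proper-class iteration, so one must verify that the truncation at $\mu^{+}$, together with the high closure of the tail above $\mu^{+}$, genuinely realizes the full extension as an extension by a $<\kappa$-directed closed set forcing followed by a sufficiently closed further step preserving $\mu$-supercompactness. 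Once this bookkeeping is set up, the conclusion follows from the techniques of \citep{ReitzTGA} with no new ideas.
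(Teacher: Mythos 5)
Your proposal follows the same route as the paper's (footnoted) proof: obtain an indestructible supercompact via the class forcing of Reitz's Theorem 3.9, then preserve supercompactness through the Easton-support Cohen class forcing by the factor-at-$\kappa$ argument of Reitz's Theorem 5.5, with L\'{e}vy--Solovay below $\kappa$ and indestructibility plus closure of the tail above. The extra detail you supply on the truncation and lifting bookkeeping is consistent with, and fills in, exactly what the paper delegates to \citep{ReitzTGA}.
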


\begin{fact}
\citep{BagariaExtPres} Suppose Vopenka's principle holds. Then there
is an extendible cardinal $\kappa$. Moreover,\footnote{This is essentially a suboptimal version Corollary 5.13 in \citep{BagariaExtPres}.
The assumptions of the Corollary follow from Theorems 6.1-3 as well
as Theorem 2.4 and Corollary 2.6 in \citep{BagariaExtPres}.} if $G$ is generic for the Easton support forcing adding a Cohen
subset to every regular cardinal, then $\kappa$ remains extendible
in $V[G]$.\label{fact:Bagaria}
\end{fact}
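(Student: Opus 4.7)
The plan is to mimic the construction in the proof of Theorem \ref{thm:There-exist-models} for each of the four theories, the only new ingredient being that we must start with a countable transitive $M$ in which the relevant large cardinal both exists and is preserved by the Easton-support class forcing $\mathbb{Q}$ that adds a Cohen subset to every regular cardinal. Given such an $M$, we repeat the construction verbatim: take $H$ generic for $\mathbb{Q}$ over $M$ and $J$ generic for $\mathbb{C}_{Ord^M}$ over $M[H]$; the poset $\mathbb{P}^{*}$ of set-sized $\mathbb{C}_\alpha$-generics over $M$ lying in $M[H\times J]$ then yields some $G$ that is $\mathbb{C}_{Ord^M}$-generic over $M$ with $M[G]=M[H\times J]$, and so $M^{G}$ and $M[H]^{J}$ are distinct standard models of $MV$ sharing a set domain. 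Since a world witnesses a given large cardinal in $M^{G}$ iff $M$ has that property (and analogously for $M[H]$ in $M[H]^{J}$), once we know the property is preserved from $M$ to $M[H]$ we obtain two distinct models of the strengthened theory sharing a set domain.

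For clause (1), standard Easton-forcing bookkeeping suffices: writing $\mathbb{Q}=\mathbb{Q}_{<\kappa}\times\mathbb{Q}_{\geq\kappa}$, the factor below $\kappa$ has size $<\kappa$ and the factor from $\kappa$ upwards is ${<}\kappa$-closed in $M$, so an inaccessible $\kappa$ in $M$ remains inaccessible in $M[H]$. For clause (2), assuming a measurable $\kappa\in M$ with normal ultrafilter $U$, the same factorisation lets us apply a master-condition/lifting argument: the small factor preserves measurability by L\'evy--Solovay, and the ${<}\kappa$-directed closed tail lifts $j_{U}:M\to N$ to $M[H]\to N[H^{*}]$ via a master condition below $\kappa$, keeping $\kappa$ measurable in $M[H]$. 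Clauses (3) and (4) are handled directly by the facts already cited: Fact \ref{fact:ReitzSC} supplies a preparatory class extension after which the Easton product at and above a supercompact $\kappa$ preserves the supercompactness of $\kappa$; and Fact \ref{fact:Bagaria} supplies, under Vopenka's principle, a ground in which the same Easton forcing preserves extendibility of $\kappa$. In each case we simply take a countable transitive $M$ elementarily reflecting the relevant fragment of such a prepared universe.

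The genuinely delicate point is clause (2). The issue is that the $Add(\kappa,1)$ factor of $\mathbb{Q}$ acts at the measurable itself, and naive L\'evy--Solovay is not enough; one must produce a master condition in $j_{U}(Add(\kappa,1))$ compatible with the image of $H\cap Add(\kappa,1)$ so that $j_{U}$ lifts. This is, however, a routine Silver-style argument given that $j_{U}(Add(\kappa,1))/H$ is sufficiently closed in $N[H]$ and the image of the generic is directed. Clauses (3) and (4) are essentially elaborate, packaged versions of the same master-condition strategy, already executed in the cited works, and the inaccessible case (1) needs no lifting at all. With these preservation facts in hand, the rest of the argument is the construction from Theorem \ref{thm:There-exist-models} applied without modification.
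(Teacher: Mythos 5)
Your proposal does not address the statement at issue. The statement is Fact~\ref{fact:Bagaria} itself: that Vopenka's principle yields an extendible cardinal $\kappa$, and that the extendibility of $\kappa$ survives the Easton-support class forcing adding a Cohen subset to every regular cardinal. What you have written is instead a proof sketch of Lemma~\ref{lem:Assuming-sufficient-large}, the four-clause lemma that \emph{consumes} this Fact. Indeed your argument explicitly invokes ``Fact~\ref{fact:Bagaria}'' to dispose of clause (4), so as a proof of the Fact it is circular, and as a whole it leaves the actual content of the statement untouched.

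For the record, the paper offers no proof of this Fact: it is imported from Bagaria--Poveda, with the footnote assembling it from Corollary 5.13 together with Theorems 6.1--6.3, Theorem 2.4 and Corollary 2.6 of that paper. A genuine argument would need two nontrivial ingredients, neither of which appears in your sketch: first, Bagaria's theorem that Vopenka's principle implies the existence of an extendible (indeed $C^{(n)}$-extendible) cardinal; second, the Bagaria--Poveda preservation machinery for extendibility under suitably definable class iterations with the right closure and homogeneity properties. Your closing remark that clauses (3) and (4) are ``essentially elaborate, packaged versions of the same master-condition strategy'' is misleading here: extendibility embeddings are rank-to-rank embeddings between initial segments of the universe, not ultrapower embeddings, and the Silver-style lifting template you describe for a measurable does not transfer to them; this is precisely why the cited results require a separate and substantially more involved treatment.
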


\begin{proof}
(1) The argument used in Theorem \ref{thm:There-exist-models} suffices
since the class forcing adding Cohen subsets to each regular cardinal
preserves inaccessible cardinals. 

(2) The argument from Theorem \ref{thm:There-exist-models} can be
adapted by only adding Cohen subsets of regular cardinals greater
than the least measurable cardinal of our initial model $M$. 

(3) Suppose that $\kappa<\lambda$ are supercompact and inaccessible
respectively. Then take the collapse of a countable Skolem hull to
obtain a countable transitive model $M$ that things some $\bar{\kappa}$
is supercompact. Now using Fact \ref{fact:ReitzSC}, we may adapt
the argument of Theorem \ref{thm:There-exist-models} to deal with
a forcing that only adds Cohen sets to regular cardinals $\delta$
such that $2^{<\delta}=\delta$.

(4) Suppose that $\lambda$ is a Vopenka cardinal. Then take the collapse
of a countable Skolem hull of $V_{\lambda}$ to obtain a countable
transitive model $M$ satisfying Vopenka's principle. Let $\kappa\in M$
be such that $M$ thinks $\kappa$ is extendible. Fact \ref{fact:Bagaria},
then ensures that the class forcing used in Theorem \ref{thm:There-exist-models}
preserves the extendibility of $\kappa$. The rest of that argument
remains the same.\footnote{This argument will also work for the theory where every world has
an extendible cardinal and thus every world has a proper class of
extendible cardinals.}
\end{proof}
\begin{cor}
None of the theories considered in Lemma \ref{lem:Assuming-sufficient-large}
are internally categorical. 
\end{cor}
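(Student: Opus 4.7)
The plan is, for each of the four theories $T$ listed in Lemma~\ref{lem:Assuming-sufficient-large}, to follow the template used for the previous corollary. Invoke Lemma~\ref{lem:Assuming-sufficient-large} to produce countable distinct standard models $\mathcal{W}_0 \neq \mathcal{W}_1$ of $T$ sharing a common set domain $\mathbb{D}$. Fix a bijection $\sigma \colon \mathcal{W}_1 \to \mathcal{W}_0$ and define the glued structure $\langle \mathbb{W}, \mathbb{D}, \in_0, \in_1 \rangle$ with $\mathbb{W} = \mathcal{W}_0$, where $\in_0$ and $\in_1$ both agree with the true $\in$ on $\mathbb{D} \times \mathbb{D}$, and $x \in_1 M$ iff $x \in \sigma(M)$ whenever $M$ is a world. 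Since the two memberships coincide on the set domain, mixing vocabularies in the Collection and Separation schemes contributes no new instances, so the glued structure satisfies the two-sorted theory $T(\in_0, \in_1)$.

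The distinguishing $\mathcal{L}_{MV}$-sentence is the one already used in the preceding corollary: \emph{there exists a world satisfying the Ground Axiom}. Inspection of the proof of Lemma~\ref{lem:Assuming-sufficient-large} shows that each $\mathcal{W}_0$ arises as $M^G$ for a carefully chosen base $M$, so $M$ itself appears as a world of $\mathcal{W}_0$ and retains its ground-theoretic properties; whereas each $\mathcal{W}_1$ has the form $M[H]^J$ with $H$ generic for the Easton-support class forcing adding a Cohen subset at every (sufficiently large) regular cardinal, and Reitz's argument (Theorem~5.3 of \citep{ReitzTGA}) then guarantees that no set-generic ground of $M[H]$ satisfies the Ground Axiom, so neither does any world of $\mathcal{W}_1$. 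Consequently $\mathcal{W}_0$ and $\mathcal{W}_1$ are not even elementarily equivalent, which precludes any definable isomorphism between the two reducts.

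The hard part, and the only genuine content beyond the unaugmented corollary, is choosing $M$ in each of the four cases so that it simultaneously satisfies the Ground Axiom, carries the appropriate large cardinal, and is compatible with the class Cohen preparation used in Lemma~\ref{lem:Assuming-sufficient-large}. For (1), take $M \models V = L$ with an inaccessible; the Ground Axiom is immediate. For (2), take $M \models V = L[U]$ for a normal measure $U$, and note that the proof of Lemma~\ref{lem:Assuming-sufficient-large}(2) restricts the Cohen forcing to regulars above the least measurable, so the Ground Axiom of $M$ is undisturbed. For (3), apply Reitz's preparation (Fact~\ref{fact:ReitzSC}) to reach a base in which the continuum coding axiom holds---which implies the Ground Axiom---and in which the supercompact remains indestructible under the subsequent Cohen class forcing. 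For (4), combine Bagaria's preparation (Fact~\ref{fact:Bagaria}) with an analogous continuum-coding step so that the extendible is indestructible while the Ground Axiom is secured. Once these bases are in hand, the gluing and elementary-inequivalence argument runs exactly as in the previous corollary, yielding the required failure of internal categoricity in all four cases.
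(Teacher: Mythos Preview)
The paper gives no proof of this corollary; it treats the result as immediate from Lemma~\ref{lem:Assuming-sufficient-large} via the template of the earlier corollary. Your write-up faithfully follows that template, and the gluing construction together with the observation that $\in_{0}=\in_{1}$ on the set domain is exactly what is needed to see that the glued structure models $T(\in_{0},\in_{1})$. Where you diverge is in the extra case-by-case work to secure the Ground Axiom for the base model~$M$. This is understandable, since the earlier corollary used $V=L$ for precisely that purpose, but it is in fact unnecessary, and your handling of case~(4) exposes the cost: combining a continuum-coding preparation with Bagaria's preservation result so as to retain both the Ground Axiom and extendibility is not covered by Fact~\ref{fact:Bagaria} as stated, and would need a separate argument.

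The cleaner route avoids the Ground Axiom entirely. Once Lemma~\ref{lem:Assuming-sufficient-large} delivers $\mathcal{W}_{0}\neq\mathcal{W}_{1}$ with a common transitive set domain $\mathbb{D}$, any isomorphism between the two reducts of the glued structure must restrict to an $\in$-automorphism of $\mathbb{D}$, hence to the identity. World-extensionality then forces the world component of the isomorphism to send each $W\in\mathbb{W}$ to the unique $W'\in\mathbb{W}$ whose $\in_{1}$-extension equals $W$; unwinding the bijection $\sigma$ this requires $\mathcal{W}_{0}=\mathcal{W}_{1}$, a contradiction. So there is no isomorphism at all, definable or otherwise, and no appeal to elementary inequivalence or the Ground Axiom is needed. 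With this observation your proof collapses to a single paragraph that works uniformly for all four cases.
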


Thus, in its standard axiomatization we have seen that $MV$ and some
its large cardinal variants are not internally categorical. Intuitively,
this probably doesn't come as a surprise, however, there is value
in thinking about how we proved this. In a nutshell, the trick was
to find a pair of countable transitive models of $ZFC$ that share
a $\mathbb{C}_{Ord}$-collapse but where one is a (proper) class forcing
extension of the other. Reflecting on this issue motivates a modified
axiomatization of $MV$ that avoids this problem. 

\section{When $MV$ is internally categorical\label{sec:When--is-1}}

We are not reading to prove the more surprising result. There is an
alternative axiomatization $MV^{*}$ of $MV$ that \emph{is }internally
categorical. Let us begin by introducing the new axiom schemata. 
\begin{lyxlist}{00.00.0000}
\item [{(Global-$ZFC_{count}^{-}$)}] $\varphi$ for every axiom of $ZFC_{count}^{-}$. 
\item [{(World-domination)}] If $N$ is a definable inner model then there
is a world $W$, such that $N\subseteq W$.
\end{lyxlist}
We already saw above that Global-$ZFC_{count}^{-}$ follows from $MV$.
So -- at least from a mathematical perspective -- there is no harm
in adding it to $MV$. World-domination is more complicated. Informally,
it tells us that if you can define an inner model $N$ of $ZFC$ using
$\mathcal{L}_{MV}$, then there will be a world $W$ that dominates
in the sense that $W\supseteq N$. Let us call the theory obtained
by augmenting $MV$ with the schemata above $MV^{*}$. We shall show
first that $MV^{*}$ is internally categorical and then we will show
that $MV^{*}$ is an alternative axiomatization of $MV$ by demonstrating
that $MV^{*}$ follows from $MV$. 

We shall begin by establishing that $ZFC_{count}^{-}$ is internally
categorical. This may be of independent interest to those with countabilist
sympathies. However, it will also be helpful for our main proof since
we have the Global-$ZFC_{count}^{-}$ schema. We let $ZFC_{count}^{-}(\in_{0},\in_{1})$
be the theory articulated in $\mathcal{L}(\in_{0},\in_{1})$ that
is obtained by adding $\in_{0}$ and $\in_{1}$ versions of every
axiom of $ZFC_{count}^{-}$ and by allowing formulae involving both
$\in_{0}$ and $\in_{1}$ into both $\in_{0}$ and $\in_{1}$ versions
of the Separation and Collection schemata.
\begin{thm}
$ZFC_{count}^{-}(\in_{0},\in_{1})$ is internally categorical; i.e.,
we can prove in $ZFC_{count}^{-}(\in_{0},\in_{1})$ that there is
a definable isomorphism between $\in_{0}$ and $\in_{1}$.\label{thm:ZFCcount}
\end{thm}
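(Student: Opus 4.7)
The plan is to mimic, in the powerset-free setting, the Mostowski-collapse style proof that $ZFC(\in_{0},\in_{1})$ is internally categorical. Specifically, I will define by $\in_{0}$-recursion the class function
\[
\sigma(x)=\{\sigma(y)\mid y\in_{0}x\}^{(1)},
\]
where the right-hand side denotes the (unique by $\in_{1}$-extensionality) $\in_{1}$-set whose $\in_{1}$-members are exactly the $\sigma(y)$ for $y\in_{0}x$, and then argue that $\sigma$ is an isomorphism from $\langle M,\in_{0}\rangle$ to $\langle M,\in_{1}\rangle$, all inside $ZFC_{count}^{-}(\in_{0},\in_{1})$.

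First I would use $\in_{0}$-Set Induction to prove, for every $\in_{0}$-set $x$, the existence and uniqueness of a function $F_{x}$ with $\in_{0}$-transitive domain $tc_{0}(\{x\})$ satisfying $F_{x}(z)=\{F_{x}(w)\mid w\in_{0}z\}^{(1)}$ throughout its domain. The inductive step at $x$ uses mixed $\in_{0}$-Collection to assemble the previously obtained $F_{y}$, for $y\in_{0}x$, into a single $\in_{0}$-set, takes their union (which by uniqueness agrees on overlaps), and then uses mixed $\in_{1}$-Collection to realize $\{F_{x}(y)\mid y\in_{0}x\}$ as an $\in_{1}$-set, which we adjoin as the value at $x$. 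Uniqueness is a routine $\in_{0}$-induction on a mixed formula. Setting $\sigma(x)=F_{x}(x)$ then yields a class function definable in $\mathcal{L}(\in_{0},\in_{1})$.

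The remaining work is to verify that $\sigma$ is a bijection preserving membership. Membership preservation, $x\in_{0}y\Leftrightarrow\sigma(x)\in_{1}\sigma(y)$, is immediate from the defining equation modulo injectivity. Injectivity proceeds by $\in_{0}$-induction on $x$ with $\in_{0}$-extensionality: if $\sigma(x)=\sigma(x')$ then the $\in_{0}$-members of $x$ and of $x'$ biject pointwise through $\sigma$, and the inductive hypothesis collapses this to identity of members, so $x=x'$. Surjectivity onto $\langle M,\in_{1}\rangle$ is by $\in_{1}$-induction on $y$: assuming every $\in_{1}$-element of $y$ lies in the range of $\sigma$, apply mixed $\in_{0}$-Collection to obtain a single $\in_{0}$-set $B$ such that every $\in_{1}$-element of $y$ equals $\sigma(x)$ for some $x\in_{0}B$, then use mixed $\in_{0}$-Separation to form $A=\{x\in_{0}B\mid\sigma(x)\in_{1}y\}$, and conclude $\sigma(A)=y$ by $\in_{1}$-extensionality.

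The essential obstacle, and the reason the theorem is substantive rather than vacuous, is that the very formula defining $\sigma$ and every induction used in its analysis mixes $\in_{0}$ with $\in_{1}$, so a version of $ZFC_{count}^{-}$ whose schemata were restricted to a single membership relation could not carry out the argument; things work precisely because the Parsonian liberalization of Separation and Collection admits mixed formulas into both the $\in_{0}$ and $\in_{1}$ versions of the schemata. A minor technical point is that the entire recursion avoids Powerset, relying only on Collection, Separation and Set Induction, all of which are available in $ZFC_{count}^{-}$; the ``every set is countable'' clause plays no direct role in producing $\sigma$ and is inherited automatically across the isomorphism.
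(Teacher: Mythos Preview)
Your argument has a genuine gap at exactly the two points where you invoke ``mixed Collection,'' and the gap is precisely the one that the countability axiom is there to close. In the recursive clause you want the class $\{\sigma(y)\mid y\in_{0}x\}$ to be realized as an $\in_{1}$-set. But $\in_{1}$-Collection only lets you bound witnesses indexed by the $\in_{1}$-members of something, while here you are indexing over $y\in_{0}x$; conversely, $\in_{0}$-Collection gives you an $\in_{0}$-set containing the values, not an $\in_{1}$-set whose $\in_{1}$-extension is that class. Nothing in the combined theory lets you pass from ``the $\in_{0}$-extension of $b$'' to ``the $\in_{1}$-extension of some $z$'' for free. The same mismatch recurs in your surjectivity step: you want an $\in_{0}$-set $B$ capturing preimages of the $\in_{1}$-members of $y$, but $\in_{0}$-Collection cannot range over $z\in_{1}y$, and $\in_{1}$-Collection gives the wrong kind of container. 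In both places what you really need is that $\in_{1}$ is $\in_{0}$-set-like (and vice versa), and that is not available from $ZFC^{-}(\in_{0},\in_{1})$ alone.

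The paper's proof is organized around this obstacle. It sets up collapses $\pi_{0},\pi_{1}$ in both directions, but rather than asserting they are total, it observes that for $x\in\mathcal{P}(\omega)^{\mathcal{V}_{0}}$ the transitive closure is hereditarily finite and hence trivially set-like in either sense, so $\pi_{1}\circ\pi_{0}(x)=x$ is defined there; the axiom that every set is countable then says every set is coded by such an $x$, which forces $\mathcal{N}_{0}=\mathcal{V}_{0}$ and hence $\pi_{1}$ is onto. So your remark that the countability clause ``plays no direct role'' is exactly backwards: it is the device that bridges the $\in_{0}$/$\in_{1}$ divide in the Collection step. Your outline can be repaired along the same lines---first match up the natural numbers and their subsets, then use the coding of arbitrary sets by reals---but as written the recursion need not produce a total function and the surjectivity induction cannot be completed.
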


\begin{proof}
We work informally in $ZFC_{count}^{-}(\in_{0},\in_{1})$. Roughly
speaking, our plan is to mutually collapse $\in_{0}$ and $\in_{1}$
within each other, then repeat the collapses and observe that there
is no room in between for difference. Let us write $\mathcal{V}_{0}$
for $\langle V,\in_{0}\rangle$ and $\mathcal{V}_{1}$ to denote $\langle V,\in_{1}\rangle$
where $V$ is the underlying universe.\footnote{Strictly speaking, I cannot refer to order $\langle V,\in_{0}\rangle$
within the framework of $ZFC_{count}^{-}(\in_{0},\in_{1})$ since
it is an ordered pair of proper classes. It is easy enough to eliminate
this abuse of notation, but it makes things less pleasant on the page,
so we shall continue to talk in this way. } Note that using the Set Induction schema, it is easy to see that
$\mathcal{V}_{0}$ thinks $\in_{1}$ is well-founded and that $\mathcal{V}_{1}$
thinks that $\in_{0}$ is well-founded. Given this, we may fix collapse
maps:
\begin{itemize}
\item $\pi_{1}:\mathcal{V}_{1}\cong\mathcal{M}_{1}=\langle M_{1},\in_{0}\rangle$;
and
\item $\pi_{0}:\mathcal{V}_{0}\cong\mathcal{M}_{0}=\langle M_{0},\in_{1}\rangle$.
\end{itemize}
Thus, $\mathcal{M}_{1}$ is the result of collapsing $\mathcal{V}_{1}$
in $\mathcal{V}_{0}$ and $\mathcal{M}_{0}$ is the result of collapsing
$\mathcal{V}_{0}$ in $\mathcal{V}_{1}$. Since these are isomorphisms,
we can then, so to speak, find a copy of $\mathcal{V}_{1}$ inside
$\mathcal{M}_{0}$. More formally, we have:
\begin{itemize}
\item $\pi_{1}\circ\pi_{0}``\mathcal{V}_{0}=\pi_{1}``\mathcal{M}_{0}\subseteq\mathcal{M}_{1}\subseteq\mathcal{V}_{0}$;
and
\item $\pi_{0}\circ\pi_{1}``\mathcal{V}_{1}=\pi_{0}``\mathcal{M}_{1}\subseteq\mathcal{M}_{0}\subseteq\mathcal{V}_{1}$.
\end{itemize}
Now if things worked out particularly nicely, we'd see that $\pi_{1}\circ\pi_{0}``\mathcal{V}_{0}$
was the collapse of $\mathcal{V}_{0}$ and thus was $\mathcal{V}_{0}$
itself. This would essentially complete the argument. However, for
all we know one of $\mathcal{V}_{0}$ and $\mathcal{V}_{1}$ is, so
to speak, shorter than the other and this means that the collapse
might not reach every element of the other model. We get around this
by noting that, according to both $\mathcal{V}_{0}$ and $\mathcal{V}_{1}$,
since every set is countable, every set can be coded by a set of natural
numbers. It will, thus suffice to show that the $\pi_{0}\circ\pi_{1}``\mathcal{V}_{0}$
and $\mathcal{V}_{0}$ have the same powerset of the naturals.

To show this, we start by reducing our notational burden by letting
$\mathcal{N}_{0}=\pi_{0}\circ\pi_{1}``\mathcal{V}_{0}$. It will then
suffice show that $\mathcal{P}(\omega)^{\mathcal{V}_{0}}\subseteq\mathcal{N}_{0}$
since the other direction is trivial. Let's start with some $x\in\mathcal{P}(\omega)^{\mathcal{V}_{0}}$.\footnote{Note that it is not strictly correct to speak of $\mathcal{P}(\omega)^{\mathcal{V}_{0}}$
since $\mathcal{V}_{0}$ contains no set of all subsets of its $\omega$.
However, we can instead regard $\mathcal{P}(\omega)^{\mathcal{V}_{0}}$
as the definable class of subsets of $\mathcal{V}_{0}$'s $\omega$. } It should be clear that $\langle trcl(\{x\})^{\mathcal{V}_{0}},\in\rangle$
is set-like according to $\mathcal{V}_{1}$ and so $\pi_{0}(x)$ is
defined. From here it is can be seen that $\pi_{1}\circ\pi_{0}(x)=x\in\mathcal{N}_{0}$
as required. To close things out, note that -- after unpacking the
coding -- this implies that $\mathcal{N}_{0}=\mathcal{V}_{0}$ and
so 
\[
\mathcal{V}_{0}\subseteq\mathcal{M}_{1}\subseteq\mathcal{V}_{0}
\]
which means that $\mathcal{V}_{0}=\mathcal{M}_{1}$ and thus, $\pi_{1}:\mathcal{V}_{1}\cong\mathcal{V}_{0}$
as required.
\end{proof}

\subsection{$MV^{*}$ is internally categorical}

We are now ready for the main result. Let $MV^{*}(\in_{0},\in_{1})$
be the theory articulated in $\mathcal{L}_{MV}(\in_{0},\in_{1})$
that consists of $\in_{0}$ and $\in_{1}$ versions of every axiom
of $MV^{*}$ where we allow formulae involving both $\in_{0}$ and
$\in_{1}$ into both $\in_{0}$ and $\in_{1}$ versions of all schemata.
\begin{thm}
$MV^{*}(\in_{0},\in_{1})$ is internally categorical; i.e., we can
prove in $MV^{*}(\in_{0},\in_{1})$ that there is a definable bijection
on the set domain $\sigma$ witnessing an isomorphism between $\in_{0}$
and $\in_{1}$ where $\sigma$ also determines an isomorphism on the
world domain.\label{thm:MV*intCat}
\end{thm}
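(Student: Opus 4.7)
The strategy is to pin down a definable isomorphism on the set domain via Theorem \ref{thm:ZFCcount}, then lift it to the world domain using World-domination.

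For the set domain, Global-$ZFC_{count}^{-}$ (in both its $\in_{0}$ and $\in_{1}$ forms) together with the mixed-language Separation and Collection schemata imply that the set domain of our model is a model of $ZFC_{count}^{-}(\in_{0},\in_{1})$. Theorem \ref{thm:ZFCcount} then supplies a uniformly definable bijection $\sigma$ on the set domain witnessing $\mathcal{V}_{0}\cong\mathcal{V}_{1}$.

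To lift to worlds, given an $\in_{0}$-world $W$, consider the class $\sigma[W]=\{\sigma(x)\ |\ x\in_{0}W\}$. This is definable in $\mathcal{L}_{MV}(\in_{0},\in_{1})$ with $W$ as a parameter, and since $\sigma$ is an $(\in_{0},\in_{1})$-isomorphism and $W$ is an inner model of $ZFC$ under $\in_{0}$, the class $\sigma[W]$ is an inner model of $ZFC$ under $\in_{1}$. The $\in_{1}$-instance of the mixed World-domination schema then yields an $\in_{1}$-world $U\supseteq\sigma[W]$; symmetric application gives an $\in_{0}$-world $W^{\dagger}\supseteq\sigma^{-1}[U]\supseteq W$. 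The goal is to set $\tilde{\sigma}(W):=\sigma[W]$, which requires showing $\sigma[W]$ is itself an $\in_{1}$-world rather than merely being contained in one. This is the main obstacle. The intended route invokes Fact \ref{fact:Laver,-Woodin}: if $U$ is a set-generic extension of $\sigma[W]$, then a Laver--Woodin parameter $r\in U$ exhibits $\sigma[W]$ as $(\mathsf{W}_{r})^{U}$, whereupon Refinement for $\in_{1}$ delivers $\sigma[W]$ as an $\in_{1}$-world. I plan to reach this configuration by iterating the dual applications of World-domination together with Amalgamation and Extension, descending within the containment chain $W\subseteq\sigma^{-1}[U]\subseteq W^{\dagger}$ and its $\in_{1}$-image until Laver--Woodin uniformity applies and forces the inclusions to collapse to equalities.

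Once $\tilde{\sigma}(W):=\sigma[W]$ is shown to be an $\in_{1}$-world, the remaining verifications are routine. World-extensionality for $\in_{1}$ gives that $\tilde{\sigma}$ is well-defined; the construction makes $x\in_{0}W\iff\sigma(x)\in_{1}\tilde{\sigma}(W)$ immediate; and applying the argument with $\in_{0}$ and $\in_{1}$ swapped yields an inverse. Thus $\tilde{\sigma}$ is the desired bijection on the world domain, coherent with $\sigma$ on the set domain, and definable throughout in $\mathcal{L}_{MV}(\in_{0},\in_{1})$.
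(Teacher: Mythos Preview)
Your overall architecture matches the paper's exactly: obtain $\sigma$ on the set domain via Theorem~\ref{thm:ZFCcount}, set $\tilde{\sigma}(W)=\sigma[W]$, and use two applications of World-domination to sandwich $\sigma[W]\subseteq U\subseteq\sigma[W^{\dagger}]$ with $U$ an $\in_{1}$-world and $W,W^{\dagger}$ both $\in_{0}$-worlds. The only gap is your finishing move. Your plan to ``iterate the dual applications \ldots\ until \ldots\ the inclusions collapse to equalities'' is not what happens and is not needed; the inclusions generally stay strict.

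What you are missing is a single appeal to the \emph{intermediate submodel theorem} (if $M\subseteq N\subseteq M[G]$ with $N$ a model of $ZFC$, then $N$ is a set-generic extension of $M$ and $M[G]$ a set-generic extension of $N$). Since $W\subseteq W^{\dagger}$ are both $\in_{0}$-worlds, Amalgamation gives a common set-generic extension, and one application of the intermediate submodel theorem shows $W^{\dagger}$ is a set-generic extension of $W$; transporting along the isomorphism $\sigma$, $\sigma[W^{\dagger}]$ is a set-generic extension of $\sigma[W]$ with respect to $\in_{1}$. Now $U$ is a $ZFC$ model lying between $\sigma[W]$ and its set-generic extension $\sigma[W^{\dagger}]$, so the intermediate submodel theorem again gives that $\sigma[W]$ is a ground of $U$. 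Refinement for $\in_{1}$ (via the Laver--Woodin parameter, exactly as you describe) then makes $\sigma[W]$ an $\in_{1}$-world. No iteration, no collapse of inclusions. The surjectivity half is symmetric, as you note.
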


\begin{proof}
We work in $MV^{*}(\in_{0},\in_{1})$. Let $\mathcal{V}_{0}=\langle V,\in_{0}\rangle$
and $\mathcal{V}_{1}=\langle V,\in_{1}\rangle$. By Fact \ref{thm:MV|-ZFC_count},
we see that both $\mathcal{V}_{0}$ and $\mathcal{V}_{1}$ satisfy
$ZFC_{count}^{-}$. Thus, we may use Theorem \ref{thm:ZFCcount},
to obtain a definable isomorphism $\sigma:\mathcal{V}_{0}\cong\mathcal{V}_{1}$
as we require. This leaves the world domain. For worlds $\in_{0}$-worlds,
$W$, we let $\sigma_{+}(W)=\sigma``W$. It will suffice to show that
$\sigma_{+}$ is a bijection. There are two things that might go wrong:
(1) $\sigma_{+}(W)$ might not be a world with respect to $\in_{1}$;
and (2) $\sigma_{+}$ might not be a surjection. 

Regarding (1), we see using ($*$) for $\in_{1}$ that there an
$\in_{1}$-world $U$ such that $\sigma_{+}(W)\subseteq U$. Moreover,
using ($*$) for $\in_{0}$, we also see that there is an $\in_{0}$-world
$W^{\dagger}$ such that $U\subseteq\sigma_{+}(W^{\dagger})$. By
the intermediate submodel theorem, in $\sigma_{+}(W^{\dagger})$,
we see that $\sigma_{+}(W)\subseteq U\subseteq\sigma_{+}(W^{\dagger})$
is a sequence of generic extensions. This means $\sigma_{+}(W)$ is
a generic refinement of $U$ and so $\sigma_{+}(W)$ is an $\in_{1}$-world
as we required. 

Regarding (2), we use a similar argument. Given an arbitrary $\in_{1}$-world
$U$, it will suffices to show that $\sigma_{+}^{-1}(U)$ is an $\in_{0}$-world.
To see this, we use ($*$) for $\in_{0}$, to fix an $\in_{0}$-world
$W$ such that $\sigma_{+}^{-1}(U)\subseteq W$ and so $U\subseteq\sigma_{+}(W)$.
Then using ($*$) for $\in_{1}$, we may fix an $\in_{1}$-world $U^{\dagger}$
such that $\sigma_{+}(W)\subseteq U^{\dagger}$. As in the previous
case, this implies that $\sigma_{+}(W)$ is a generic extension of
$U$ and thus, that $W$ is a generic extension of $\sigma_{+}^{-1}(U)$
and so $\sigma_{+}^{-1}(U)$ is an $\in_{0}$-world as we require.
\end{proof}

\subsection{$MV$ implies $MV^{*}$}

To complete our demonstration that an axiomatization of $MV$ is internally
categorical, we need to prove that $MV$ implies $MV^{*}$. To do
this, we need to show that:
\begin{enumerate}
\item World-domination is expressible in $\mathcal{L}_{MV}$; and
\item World-domination follows from $MV$.
\end{enumerate}

\subsubsection{World-domination is expressible}

To motivate why there might even be a problem here, let us recall
that the naive statement of the ordinary axioms of $MV$ is not obviously
expressible in $\mathcal{L}_{MV}$. For example, the Refinement axiom
is often stated as follows: ``If $U$ is a world, and $U=W[G]$,
where $G$ is $\mathbb{P}$-generic over $W$, then $W$ is a world''
\citep{SteelGP}. While this conveys the idea very clearly, it's not
so clear where $W$ came from. We appear to be quantifying over it
like a world before we even know that it is a world. The problem is
resolved by using Fact \ref{fact:Laver,-Woodin}, which allows for
the ``official'' formulation given in Section \ref{subsec:The--theory}.
As stated above, our formulation of World-domination has similar problems.
It appears to quantify over inner models, but in fact, it is more
correctly and much more formally stated as a schema as follows:
\begin{lyxlist}{00.00.0000}
\item [{(World-domination)}] For all sets $y_{0},...,y_{n}$ and for all
worlds $U_{0},...,U_{m}$, If 
\[
\varphi(x,y_{0},...,y_{n},U_{0},...,U_{m})
\]
 is a formula of $\mathcal{L}_{MV}$ that defines an \emph{inner}
\emph{model} of $ZFC$, then there is some world $W$ such that $\forall x(\varphi(x)\to x\in W)$.
\end{lyxlist}
By referring to formulae we are able to remove any quantification
over inner models. However, we still need to know when a particular
formula defines an inner model of $ZFC$. This can be addressed with
a classic fact. 
\begin{fact}
($ZFC^{-})$ A transitive class $M$ is an inner model of $ZF$ iff
it is closed under the G\"{o}del operations and is almost universal; i.e.,
for every subset $X\subseteq M$ there is some $Y\in M$ with $Y\supseteq X$.\footnote{For a proof see Theorem 13.9 in \citep{JechST}. The proof there is
conducted in $ZFC$ rather than $ZFC^{-}$, however, the proof is
the same. The only (cosmetic) modification occurs in establishing
almost universality of an inner model, $M$ of $ZFC$. We start with
$X\subseteq M$, and then obtain that $X\subseteq V_{\alpha}^{M}\in M$
rather than $X\subseteq V_{\alpha}\cap M$ since, in general, we cannot
guarantee that $V_{\alpha}$ exists in $ZFC^{-}$. }
\end{fact}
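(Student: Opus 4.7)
The plan is to follow Jech's proof of Theorem 13.9 essentially verbatim, with the single modification flagged in the footnote. Recall that the G\"odel operations are a finite list of explicit $\Delta_{0}$-definable set operations (pairing, Cartesian product, set difference, and a handful of projections and permutations) whose closure is provably equivalent to closure under all $\Delta_{0}$-definable set constructions.

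For the forward direction, assume $M$ is a transitive class model of $ZF$ containing all the ordinals. Closure under the G\"odel operations is immediate, since each is given by a $\Delta_{0}$ formula and the required outputs are provably sets in $ZF$. For almost universality, fix a set $X \subseteq M$. In the ambient $ZFC^{-}$, apply Collection to the definable map $x \mapsto \rho^{M}(x)$ on $X$ to obtain an ordinal $\alpha$ bounding the $M$-ranks of elements of $X$. This is where the footnote's adaptation enters: because $V_{\alpha}$ need not exist in $V$, we replace the classical witness $V_{\alpha}\cap M$ by $V_{\alpha}^{M}$, which does exist in $M$ since $M$ satisfies Powerset; and $X \subseteq V_{\alpha}^{M} \in M$.

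For the converse, assume $M$ is transitive, closed under the G\"odel operations, and almost universal. Transitivity gives Extensionality and Foundation, while Pairing, Union and Infinity are either direct G\"odel operations or easy consequences of closure together with almost universality. The standard lemma on G\"odel operations yields $\Delta_{0}$-Separation inside $M$. The main work is to bootstrap to full Separation, Replacement and Powerset in $M$. The engine is a reflection construction internal to $M$: given a formula $\varphi$ with parameters in $M$, iterating almost universality to absorb $\Sigma_{n}$-witnesses produces a transitive $Y \in M$ on which $\varphi$ is absolute with respect to $M$, and $\Delta_{0}$-Separation applied inside $Y$ then isolates the target set. Replacement follows by combining Collection in $V$ with almost universality and the Separation just derived. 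Powerset of $x \in M$ is extracted via Separation from an almost-universal cover of $\mathcal{P}(x) \cap M$.

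The main obstacle is ensuring this reflection step in the converse survives the drop from $ZFC$ to $ZFC^{-}$ in the background. However, the construction only deploys Collection and Separation in $V$, both available in $ZFC^{-}$, together with the already-derived $\Delta_{0}$-Separation in $M$; no ambient Powerset is needed. The argument therefore transfers unchanged apart from the $V_{\alpha}^{M}$-for-$V_{\alpha}\cap M$ substitution in the forward direction, exactly as the footnote indicates.
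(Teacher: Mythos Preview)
Your approach mirrors the paper's: defer to Jech's Theorem 13.9 and make the single $V_\alpha^M$-for-$V_\alpha \cap M$ substitution in the forward direction. That direction is fine as written.

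The gap is in the converse. Your derivation of Powerset in $M$ says it is ``extracted via Separation from an almost-universal cover of $\mathcal{P}(x) \cap M$,'' but almost universality only covers \emph{sets}, and over $ZFC^-$ the class $\mathcal{P}(x) \cap M$ need not be a set. Your final paragraph checks that the reflection machinery behind Separation avoids ambient Powerset, yet overlooks that the Powerset-in-$M$ step invokes it directly. This is not a repairable lacuna: take $M = V$ in any model of $ZFC^-$ where Powerset fails (for instance $H_{\omega_1}$, or indeed the set domain of any model of $MV$). Then $M$ is transitive, trivially almost universal (take $Y = X$), and closed under the G\"odel operations, yet $M \not\models$ Powerset and so is not an inner model of $ZF$. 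The converse of the Fact as stated is therefore false over $ZFC^-$, and the footnote you faithfully follow is too sanguine in asserting that the only adjustment is cosmetic and confined to the forward direction.
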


It is easy to see that being almost universal and closed under the
G\"{o}del operations is expressible in $\mathcal{L}_{\in}$. Then using
this and the fact that $MV$ implies $ZFC^{-}$, we can reformulate
World-domination by demanding that the class of $x$ such that $\varphi(x,y_{0},...,y_{n},U_{0},...,U_{m})$
is closed under the G\"{o}del operations and is almost universal. This
is easily expressed in $\mathcal{L}_{MV}$ and so World-domination
is expressible as a schema in $\mathcal{L}_{MV}$.

\subsubsection{World-domination follows}

Now we move onto our final task by showing that World-domination follows
from $MV$. Our strategy is to exploit the fact, explored earlier,
that models of $MV$ are closely related to models like Solovay's
where an inaccessible cardinals is collapsed to be $\omega_{1}$.
The key idea of this proof was provided by Gabe Goldberg and I'm grateful
to him for letting me use it here. We work our way gently toward our
target by starting with a warmup lemma, which contains the main idea
of the proof. We then use this and some lemmas from Steel and Goldberg
to push the result into less familiar setting of the generic multiverse,
which then gives us that result we require. 
\begin{lem}
Suppose $G$ is $\mathbb{C}_{Ord}$-generic over $V$ and that $M\subseteq V[G]$
is an inner model of $ZFC$ that is definable in $V[G]$ using $V$
as a parameter and parameters from $V$. Then $M\subseteq V$.\label{lem:warmup}
\end{lem}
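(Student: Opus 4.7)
The plan is to prove, by transfinite induction on $\alpha\in Ord$, that $V_\alpha^M\in V$; then $M=\bigcup_\alpha V_\alpha^M$ contains only elements of $V$, so $M\subseteq V$ is immediate by transitivity of $V$ inside $V[G]$. A preliminary cosmetic step uses Fact~\ref{fact:Laver,-Woodin}: $V$ is definable in $V[G]$ via a formula $\psi_V(y,r)$ with parameter $r\in V$, so the assumption that $M$ is definable using $V$ and $V$-parameters reduces to $M=\{y\in V[G]:\varphi(y,\vec{p})\}^{V[G]}$ for some formula $\varphi$ of $\mathcal{L}_\in$ and parameters $\vec{p}\in V$. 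The driving tool throughout is the weak homogeneity of $\mathbb{C}_{Ord}$: any two conditions can be amalgamated by a permutation of second coordinates, so for any formula $\chi$ and any $\vec{a}\in V$, the truth value of $\chi(\check{\vec{a}})$ in $V[G]$ is decided by the empty condition.

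As a subsidiary principle used throughout: if $X\in V[G]$ is a set that is definable in $V[G]$ from $V$-parameters and satisfies $X\subseteq V$, then $X\in V$. For by weak homogeneity, $X=\{y\in V:\mathbf{1}\Vdash\chi_X(\check{y},\check{\vec{a}})\}$ as classes; the $V[G]$-rank-supremum of elements of $X$ is an ordinal of $V[G]$, hence of $V$, so $X$ is bounded in rank and Separation in $V$ over a suitable $V_\gamma^V$ delivers $X\in V$. The base case $V_0^M=\emptyset$ is trivial, and at limits $V_\lambda^M=\bigcup_{\beta<\lambda}V_\beta^M$ is obtained inside $V$ using the inductive hypothesis, the $V$-definability of the map $\beta\mapsto V_\beta^M$ supplied by the homogeneity argument, and Replacement and Union in $V$.

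The heart of the argument is the successor step: given $V_\alpha^M\in V$, one must show $V_{\alpha+1}^M=\mathcal{P}^M(V_\alpha^M)\subseteq V$, whereupon the subsidiary principle promotes the inclusion to $V_{\alpha+1}^M\in V$. Fix $S\in V_{\alpha+1}^M$; then $S\subseteq V_\alpha^M\subseteq V$. Pick $\beta$ with $S\in V[G\restriction\beta]$ and factor $\mathbb{C}_{Ord}\cong\mathbb{C}_\beta\times\mathbb{C}_{[\beta,Ord)}$. By weak homogeneity of the tail forcing over $V[G\restriction\beta]$, membership in $M$ for sets of $V[G\restriction\beta]$ is decided by the empty tail condition, reducing the question to a set-forcing situation inside $V[G\restriction\beta]$. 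If $S$ were not in $V$, the intermediate-model theorem would present $V[S]$ as a non-trivial set-forcing extension of $V$ by a complete subalgebra $\mathbb{B}\subseteq\mathbb{C}_\beta$ inheriting weak homogeneity; the $V$-parameter-definability of $V_{\alpha+1}^M$ combined with the abundance of mutually $\mathbb{B}$-generic filters supplied by $G$ should then force $V_{\alpha+1}^M$ to swallow an orbit too large to fit in any single set of $V[G]$. Pinning this contradiction down cleanly is where Goldberg's insight, flagged in the preamble, will have to enter, and it is the main obstacle I anticipate; everything upstream is bookkeeping by induction and homogeneity.
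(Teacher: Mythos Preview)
Your inductive framework and subsidiary principle are sound, and the paper's proof is organised along very similar lines (it works with $Z=\mathcal{P}(\lambda)^M$ for ordinals $\lambda$ rather than your $V_\alpha^M$, which is the same thing after coding by sets of ordinals). But the proposal has a genuine gap at exactly the point you flag: the successor step is not proved, and your sketch of how it should go is pointed in the wrong direction.

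The idea you are missing is a cardinality contradiction, specifically Solovay's perfect-set argument. The set $V_{\alpha+1}^M=\mathcal{P}^M(V_\alpha^M)$ is a \emph{set} in $M$, hence a set in $V[G]$, hence already countable in $V[G\restriction\gamma]$ for some $\gamma$. Using tail homogeneity, membership in $V_{\alpha+1}^M$ for elements of $V[G\restriction\gamma]$ is expressed by a formula $\varphi(x)$ with parameters in $V$. If some $S\in V_{\alpha+1}^M\setminus V$, take a $\mathbb{C}_\gamma$-name $\dot S$ and a condition $p$ forcing $\varphi(\dot S)\wedge\dot S\notin\check V\wedge|\{x:\varphi(x)\}|\le\omega$. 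Since $p\Vdash\dot S\notin\check V$ and $\dot S$ names a subset of $V_\alpha^M\in V$, one splits below $p$ on bits of $\dot S$ to build a perfect tree of conditions; distinct branches give distinct realizations, all forced into $V_{\alpha+1}^M$. In $V[G]$ the poset $\mathbb{C}_\gamma$ is countable, so there are perfectly many generics through this tree, producing a perfect subset of $V_{\alpha+1}^M$ and contradicting the forced countability. This is exactly what the paper does.

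Your detour through the intermediate-model theorem and a complete subalgebra $\mathbb{B}$ is unnecessary and does not obviously close. You aim to produce something ``too large to fit in any single set of $V[G]$'', i.e.\ a proper class of elements of $V_{\alpha+1}^M$; but the weak homogeneity of $\mathbb{B}$ alone does not transfer the statement ``$\dot S\in M$'' from one $\mathbb{B}$-generic to another, since $M$ is defined in $V[G]$, not in $V[S]$. What does the work is tail homogeneity of $\mathbb{C}_{Ord}$ applied to the full defining formula, and the contradiction is with countability of a single set, not with being a set at all. The paper never passes through $V[S]$; it works directly with the name $\dot S$ and the formula $\varphi$.
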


\begin{proof}
It will suffice to show that every set of ordinals in $M$ is also
in $V$. To show this, we let $\lambda$ be an arbitrary ordinal and
let $Z=\mathcal{P}(\lambda)^{M}$. Since $\lambda$ is arbitrary,
it will suffice to show that $Z\subseteq V$. Our plan is to exploit
the homogeneity of $\mathbb{C}_{Ord}$ and chase a contradiction regarding
the cardinality of $Z$. The proof closely follows the argument used
by Solovay to establish that perfect set property in his well-known
model. Suppose toward a contradiction that $Z\nsubseteq V$ and fix
some $y\in V\backslash Z$ witnessing this.\footnote{A similar argument can be found in the proof of Theorem 1.1 in \citep{KanovieODrealsinGround}.}
By standard facts about collapse forcing, we may fix $\alpha<\kappa$
sufficiently large such that:
\begin{enumerate}
\item $y\in Z\in V[G\restriction\alpha]$; and
\item $Z$ is countable in $V[G\restriction\alpha]$.
\end{enumerate}
Using the homogeneity and absorption properties of $\mathbb{C}_{Ord}$
we may fix a formula $\varphi_{Z}(x)$ using $\lambda$ and elements
of $V$ as parameters, which is such that for any $H$ that is $\mathbb{C}_{\alpha}$-generic
over $V$ 
\[
x\in Z\ \Leftrightarrow\ V[H]\models\varphi_{Z}(x)
\]
for all $x\in V[H]$. Next we let $\dot{y}$ be a $\mathbb{C}_{\alpha}$-name
such that $\dot{y}_{G\restriction\alpha}=y$. Then we may put things
together and fix $p\in\mathbb{C}_{\alpha}$ such that: 
\[
p\Vdash\varphi_{Z}(\dot{y})\wedge\dot{y}\notin\check{V}\wedge|\{x\subseteq\lambda\ |\ \varphi_{Z}(x)\}|\leq\omega.
\]
Using the first two conjuncts and noting that $y\subseteq\lambda\subseteq V$,
it can be seen -- using Solovay's argument -- that in $V[G]$, $Z$
contains a perfect set. But using the final conjunct, we see that
$Z$ is countable in $V[G]$, which gives us our desired contradiction.
\end{proof}
We now recall an important fact about Steel's generic multiverse.
\begin{fact}
(Goldberg, Lemma 38 in \citep{MeadMadPGMV}) There is a total computable
function $e:\mathcal{L}_{MV}\to\mathcal{L}_{\in}(\dot{M})$ with the
following property. Suppose $\mathcal{W}\models MV$, $M$ is a world
of $\mathcal{W}$, and $N$ is the collection of sets of $\mathcal{W}$.
Then\label{fact:Goldberg1} 
\[
\mathcal{W}\models\varphi(\bar{x})\Leftrightarrow\langle N,M\rangle\models e(\varphi)(\bar{x})
\]
where $\langle N,M\rangle$ denote a model of $\mathcal{L}_{\in}$
expanded by a one-place relation symbol.
\end{fact}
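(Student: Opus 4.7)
The plan is to build $e$ by structural recursion, replacing each world-sort variable $W$ with a pair $(g_W, r_W)$ of set-sort variables that codes $W$ as the Laver--Woodin refinement $\{x : \psi(x, r_W)\}^{M[g_W]}$, where $\psi$ is the fixed formula of Fact~\ref{fact:Laver,-Woodin}. Every world admits such a code: given a world $W$, Amalgamation of $W$ and the distinguished world $M$ produces a common generic extension $U = M[g] = W[h]$ with $g, h \in U \subseteq N$; since $W$ is then a generic refinement of $U$, Fact~\ref{fact:Laver,-Woodin} yields some $r \in W$ with $W = \{x : \psi(x, r)\}^{U}$, so $(g, r)$ codes $W$ inside $\langle N, M \rangle$.

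For the converse, I would call a pair $(g, r) \in N$ a \emph{valid code} when $g$ is $\mathbb{P}$-generic over $M$ for some $\mathbb{P} \in M$, $r \in M[g]$, and $\{x : \psi(x, r)\}^{M[g]}$ is a transitive class model of $ZFC$ of which $M[g]$ is a set-generic extension. Each of these conditions is first-order in $\langle N, M \rangle$: the unary predicate $\dot{M}$ isolates $M$, the class $M[g]$ is uniformly definable from $M$ and $g$, and forcing relations over $M$ are first-order. Given a valid $(g, r)$, Set-capture places $g$ in some world $W_0$, Amalgamation of $W_0$ and $M$ produces a world $U \supseteq M[g]$, the intermediate model theorem together with Refinement makes $M[g]$ a world, and a further application of Refinement turns $\{x : \psi(x, r)\}^{M[g]}$ into a world. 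This establishes a two-sided correspondence between worlds of $\mathcal{W}$ and valid codes in $N$.

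With that correspondence in hand, the recursion for $e$ is forced: set-atomics translate to themselves; $x \in W$ becomes $\psi(x, r_W)^{M[g_W]}$; $W_0 = W_1$ becomes coextensionality of the two coded classes; sentential connectives commute with $e$; and $\exists W\,\varphi$ becomes $\exists g \exists r\,(\mathrm{valid}(g, r) \wedge e(\varphi))$, where $\mathrm{valid}$ abbreviates the first-order validity clause above. A routine formula-complexity induction then delivers the required biconditional $\mathcal{W} \models \varphi(\bar x) \iff \langle N, M \rangle \models e(\varphi)(\bar x)$. The main obstacle is pinning down the validity predicate so that it is both expressible uniformly in $\mathcal{L}_{\in}(\dot{M})$ and provably equivalent to ``$(g, r)$ codes a world of $\mathcal{W}$''; for countable $\mathcal{W}$ this is particularly clean via Fact~\ref{fact:Steel comp}, and for uncountable $\mathcal{W}$ one either Skolem-hulls down to the countable case or observes that the correspondence argument uses only Amalgamation, Refinement, Set-capture, and Fact~\ref{fact:Laver,-Woodin}, all of which hold in $MV$ without any cardinality hypothesis.
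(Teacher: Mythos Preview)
The paper does not give its own proof of this statement; it is invoked as a Fact, credited to Goldberg and cited as Lemma~38 of \citep{MeadMadPGMV}. So there is no in-paper argument to compare against.

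That said, your sketch is essentially the natural proof and is correct in outline. The two-sided correspondence between worlds and valid codes $(g,r)$ is the heart of the matter, and both directions go through as you indicate: Amalgamation plus Laver--Woodin for the forward direction, and Set-capture, Amalgamation, the intermediate model theorem and Refinement for the converse. Two small points are worth tightening. First, the validity clause ``$\{x:\psi(x,r)\}^{M[g]}$ is a transitive class model of $ZFC$'' is a schema, not a single formula; what you actually need is the single $\mathcal{L}_\in(\dot M)$-condition ``$\mathsf{W}_r$ is a ground of $M[g]$,'' which in the standard Laver--Woodin/Reitz setup is uniformly first-order, so you should phrase $\mathrm{valid}(g,r)$ that way. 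Second, your closing remark about reducing to countable $\mathcal{W}$ via Skolem hulls is unnecessary: the correspondence argument you give uses only the $MV$ axioms and Fact~\ref{fact:Laver,-Woodin}, and thus holds in any model of $MV$ without a cardinality hypothesis.
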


We now use this fact to prove a special case of World-domination
where only one world parameter is involved. 
\begin{lem}
$MV$ proves the following. Suppose $M$ is an inner model of $ZFC$
that is definable using a single world parameter $U$ and parameters
from $W$. Then $M\subseteq U$.\label{lem:Goldberg2}
\end{lem}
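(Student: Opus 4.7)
The plan is to transplant the warmup (Lemma \ref{lem:warmup}) into the generic multiverse setting by combining Goldberg's translation (Fact \ref{fact:Goldberg1}) with the representation of countable models of $MV$ as standard models (Fact \ref{fact:Steel comp}(2)). The underlying intuition is that from the viewpoint of a single world $U$, the set domain of the multiverse behaves exactly like the $\mathbb{C}_{Ord}$-generic extension of $U$, which is precisely the situation of the warmup lemma.

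Fix a formula $\varphi(x,U,\bar p)$ of $\mathcal{L}_{MV}$ that is provably in $MV$ taken to define an inner model $M$ of $ZFC$. Let $\mathcal{W}\models MV$, let $U$ be a world of $\mathcal{W}$, and let $\bar p\in U$. To show $x\in^{\mathcal{W}}M\to x\in^{\mathcal{W}}U$, I would first reduce to the case in which $\mathcal{W}$ is countable by passing to a countable $\mathcal{L}_{MV}$-elementary submodel $\mathcal{W}_0\prec\mathcal{W}$ containing $U$, $\bar p$, and any given witness $x$. Elementarity suffices because the conclusion is first-order with parameters from $\mathcal{W}_0$.

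In $\mathcal{W}_0$, Fact \ref{fact:Steel comp}(2) lets us fix $G$ that is $\mathbb{C}_{Ord^U}$-generic over $U$ with $\mathcal{W}_0=U^G$; the set domain is then $N=U[G]$. Applying the translation $e$ of Fact \ref{fact:Goldberg1}, the class $M=\{x:\mathcal{W}_0\models\varphi(x,U,\bar p)\}$ coincides with $\{x:\langle N,U\rangle\models e(\varphi)(x,\bar p)\}$, so $M$ is definable inside the set-theoretic structure $U[G]$ using $U$ as a class parameter (via the fresh predicate symbol) and parameters $\bar p\in U$. The hypothesis that $M$ is an inner model of $ZFC$ — formally, closure under the G\"{o}del operations together with almost universality, per the expressibility analysis above — is preserved by the translation.

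At this point I would run the argument of Lemma \ref{lem:warmup} with $U$ in place of the ambient $V$ and $U[G]$ in place of $V[G]$: given an ordinal $\lambda$ of $U$, the set $Z=\mathcal{P}(\lambda)^M$ becomes countable at some intermediate stage $U[G\restriction\alpha]$ and is definable there by a formula $\varphi_Z$ using $\lambda$ and parameters from $U$; if $Z\nsubseteq U$, then Solovay's homogeneity-plus-perfect-set argument produces a perfect subset of $Z$, contradicting the countability of $Z$ in $U[G]$. Hence $Z\subseteq U$ for every $\lambda$, so $M\subseteq U$; pulling this back through $\mathcal{W}_0\prec\mathcal{W}$ completes the proof. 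The main obstacle is purely bookkeeping: verifying that the warmup argument really does relativize with $U$ in place of $V$. The proof only uses that $U\models ZFC$, that $\mathbb{C}_{Ord^U}$ is homogeneous and $Ord^U$-cc in $U$, and Solovay's absorption argument in $U[G]$, all of which carry over verbatim — but care is required to ensure the ``parameters from $V$'' clause of Lemma \ref{lem:warmup} exactly matches the ``parameters from $U$'' that emerges from Goldberg's translation.
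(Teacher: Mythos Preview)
Your proposal is correct and follows essentially the same route as the paper: reduce to a countable model, invoke Fact~\ref{fact:Steel comp}(2) to represent $\mathcal{W}$ as $U^{G}$ with set domain $U[G]$, use Goldberg's translation (Fact~\ref{fact:Goldberg1}) to render $M$ definable over $U[G]$ from $U$ and parameters in $U$, and then apply Lemma~\ref{lem:warmup}. The paper simply cites Lemma~\ref{lem:warmup} at the final step, whereas you are more explicit about the relativization of that lemma from $V$ to $U$; this is a cosmetic rather than substantive difference.
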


\begin{proof}
Let $\mathcal{W}$ be a model of $MV$ and without loss of generality,
we suppose that $\mathcal{W}$ is countable. We show that the claim
holds in $\mathcal{W}$. We start by letting $\varphi_{M}(x,\bar{y},U)$
be the formula defining $M$ in $\mathcal{W}$ where $U$ is a world
parameter $\bar{y}$ is a finite sequence of set parameters from $U$.
Now using Fact \ref{fact:Steel comp}, we may fix a $\mathbb{C}_{Ord^{U}}$-generic
$G$ such that $U[G]$ is identical to the set domain of $\mathcal{W}$.
Then using Fact \ref{fact:Goldberg1}, we see that 
\[
\mathcal{W}\models\varphi_{M}(x,\bar{y},U)\ \Leftrightarrow\ \langle U[G],U\rangle\models e(\varphi)(x,\bar{y}).
\]
Thus, we see that $M$ is defined in $U[G]$ by a formula of $\mathcal{L}_{\in}$
using $U$ as a parameter and parameters from $U$. Finally, we use
Lemma \ref{lem:warmup} to see that $M\subseteq U$, as required.
\end{proof}
Next, we use Amalgamation to prove the general case where multiple
world parameters are involved. 
\begin{thm}
$MV$ proves the following. Suppose $M$ is an inner model of $ZFC$
that is definable using set and world parameters. Then there is some
world $W$ such that $M\subseteq W$.\label{thm:Goldberg3}
\end{thm}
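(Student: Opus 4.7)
The plan is to reduce the multi-world-parameter case to the single-world-parameter case handled by Lemma \ref{lem:Goldberg2}, using Amalgamation together with the Laver--Woodin uniform definability of grounds (Fact \ref{fact:Laver,-Woodin}). Suppose $M$ is defined in a model $\mathcal{W}$ of $MV$ by a formula $\varphi(x,\bar{y},U_{0},\ldots,U_{m})$ where $\bar{y}$ is a tuple of set parameters and $U_{0},\ldots,U_{m}$ are worlds. First, I would use Set-capture to place each component of $\bar{y}$ inside some world, and then iterate Amalgamation over the resulting finite collection of worlds to produce a single world $U$ of which each $U_{i}$ is a generic refinement and which contains $\bar{y}$ as a subset. (Amalgamation gives a common generic extension of two worlds; a straightforward induction on the number of worlds gives the finitary version.)

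Next, because each $U_{i}$ is a generic refinement of $U$, Fact \ref{fact:Laver,-Woodin} supplies a parameter $r_{i}\in U$ such that $U_{i}=(\mathsf{W}_{r_{i}})^{U}$. This allows me to rewrite $\varphi$ so that the world parameters $U_{i}$ disappear in favour of the single world $U$: every atomic subformula of the form $z\in U_{i}$ is replaced by the set-theoretic formula $\psi(z,r_{i})^{U}$ (where $\psi$ is the Laver--Woodin formula), producing a new $\mathcal{L}_{MV}$-formula $\tilde{\varphi}(x,\bar{y},r_{0},\ldots,r_{m},U)$ with $U$ as its only world parameter and with all set parameters $\bar{y},r_{0},\ldots,r_{m}$ lying in $U$. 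By construction, $\tilde{\varphi}$ defines the same inner model $M$ in $\mathcal{W}$.

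Having reduced to a single world parameter, I would invoke Lemma \ref{lem:Goldberg2} with $\tilde{\varphi}$ and $U$ to conclude $M\subseteq U$, and take $W=U$ as the dominating world.

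The main obstacle is verifying that the rewriting step is legitimate --- i.e., that replacing $U_{i}$ by $\mathsf{W}_{r_{i}}$ computed inside $U$ preserves the truth of $\varphi$ in $\mathcal{W}$. This amounts to checking that the ``external'' meaning of the world $U_{i}$ (the class of sets the multiverse recognizes as belonging to it) coincides with the class $(\mathsf{W}_{r_{i}})^{U}$ defined inside $U$ by the Laver--Woodin formula. Since worlds are transitive (by World-transitivity), the membership relation on $U_{i}$ seen from the multiverse agrees with the $\in$-relation inside $U$, and Fact \ref{fact:Laver,-Woodin} applied inside $U$ yields exactly $U_{i}$ as a class in $U$; so the translation is sound. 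Everything else in the argument --- Set-capture, iterated Amalgamation, and the invocation of the single-parameter lemma --- is routine.
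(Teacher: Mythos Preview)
Your proposal is correct and follows essentially the same route as the paper's proof: use Set-capture and iterated Amalgamation to produce a single world $U$ extending all the world parameters and containing all the set parameters, invoke the Laver--Woodin theorem to replace each world parameter $U_i$ by the set parameter $r_i\in U$ coding it as a ground of $U$, and then apply Lemma~\ref{lem:Goldberg2} to conclude $M\subseteq U$. Your discussion of the ``main obstacle'' (that the rewriting is sound because $(\mathsf{W}_{r_i})^U$ really is the class of sets in $U_i$) makes explicit a point the paper leaves implicit, but otherwise the arguments are the same.
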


\begin{proof}
Again, we show that the claim holds in arbitrary models of $MV$.
We start by fixing a model $\mathcal{W}$ of $MV$ and we shall now
work inside $\mathcal{W}$. Our plan is to use the Amalgamation axiom
to reduce our problem to that addressed in the previous lemma. We
suppose that $M$ is definable by the $\mathcal{L}_{MV}$-formula
$\varphi(x,y_{0},...,y_{m},W_{0},...,W_{n},)$ where $y_{0},...,y_{m}$
are set parameters and $W_{0},...,W_{n}$ are world parameters. First,
we let $W_{y_{i}}$ be a world with $y_{i}\in W_{y_{i}}$ for all
$i\leq m$,. Then by repeated applications of the Amalgamation axiom,
we may fix a world $W$ such that for all $j\leq n$, $W_{j}\subseteq W$
and for all $i\leq m$, $W_{y_{i}}\in W$. The observe by the Laver-Woodin
theorem, each of these worlds is definable in $W$ from a parameter
in $W$. Thus, $M$ is definable by a formula using $W$ and elements
of $W$. Thus by Lemma \ref{lem:Goldberg2}, we see that $M\subseteq W$
as required.
\end{proof}
Finally, our main claim is complete.
\begin{cor}
$MV\vdash\text{World-domination}$.
\end{cor}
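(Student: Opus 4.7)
The plan is to observe that this corollary is essentially the schema formulation of Theorem \ref{thm:Goldberg3}, which has just been proved, so the remaining work is purely bookkeeping. To derive each instance of the World-domination schema I would fix a formula $\varphi(x,y_{0},\ldots,y_{n},U_{0},\ldots,U_{m})$ of $\mathcal{L}_{MV}$, work inside $MV$, and assume the hypothesis of the instance: that the class defined by $\varphi$ (using some fixed set and world parameters) is an inner model of $ZFC$. As noted in the preceding subsection, this hypothesis is expressible in $\mathcal{L}_{MV}$ because $MV\vdash ZFC^{-}$ and, over $ZFC^{-}$, being an inner model of $ZF$ is equivalent to being a transitive class that is closed under the G\"{o}del operations and almost universal; adding the $\mathcal{L}_{MV}$-expressible assertion that the class satisfies Choice takes care of the full $ZFC$ demand. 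With these hypotheses in hand, Theorem \ref{thm:Goldberg3} hands us, within $MV$, a world $W$ with $\{x:\varphi(x,\bar{y},\bar{U})\}\subseteq W$, which is exactly the conclusion of the relevant instance of the World-domination schema.

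There is no genuine obstacle here: the real content has already been discharged, first in the expressibility paragraph (which converts the informal quantification over inner models into a schema over formulae using the G\"{o}del-operations-plus-almost-universality criterion), and then in Theorem \ref{thm:Goldberg3} (which proves the semantic content of each instance by amalgamating the finitely many world parameters into a single world and applying the one-parameter Lemma \ref{lem:Goldberg2}, whose core is in turn the homogeneity-and-perfect-set argument of Lemma \ref{lem:warmup}). The corollary is nothing more than the observation that these two facts, taken together, establish every instance of the schema uniformly, and hence that $MV$ proves World-domination as a whole.
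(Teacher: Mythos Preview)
Your proposal is correct and matches the paper's approach exactly: the paper treats the corollary as immediate from Theorem~\ref{thm:Goldberg3} together with the earlier expressibility discussion, and you have simply spelled out the bookkeeping that the paper leaves implicit.
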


\subsection{A couple of applications in relative interpretablity}

Before we move on to discuss these results, we first take a slight
detour into the world of interpretation. As has been noted by \citet{EnayatLelykFOCat},
categoricity properties are often closely related to important equivalence
relations between theories. We shall see that $MV$ bears further
witness to this close relationship. The following quite technical
definition will allow us to set things up. For models $\mathcal{A}$
and $\mathcal{B}$ of some language $\mathcal{L}$ we shall write
$\mathcal{A}\triangleright_{par}\mathcal{B}$ to indicate that $\mathcal{A}$
interprets $\mathcal{B}$ with parameters.\footnote{See \citep{EnayatLelykFOCat} for a proper definition, but the essential
idea is that the domain and relations on $\mathcal{B}$ are all definable
in $\mathcal{A}$. } 
\begin{defn}
Let $T$ be a theory articulated in some language $\mathcal{L}$.
We say:\footnote{I believe this terminology with the exception of (3) originates with
\citep{EnayatVisTheme}. I include (3) as a weakening since it provides
an interesting warmup for the main claims.} 
\begin{enumerate}
\item $T$ is \emph{solid} if whenever $\mathcal{A},\mathcal{B}$ and $\mathcal{C}$
are models of $T$ where $\mathcal{A}\triangleright_{par}\mathcal{B}\triangleright_{par}\mathcal{C}$
and $\sigma:\mathcal{A}\cong\mathcal{C}$ is definable in $\mathcal{A}$
from parameters, then there is $\tau:\mathcal{A}\cong\mathcal{B}$
that is also definable in $\mathcal{A}$ from parameters. 
\item $T$ is \emph{tight} if whenever $T_{0}$ and $T_{1}$ are bi-interpretable
extensions of $T$, then $T_{0}$ and $T_{1}$ are the same theory
in the sense that they have the same deductive closure. 
\item $T$ is \emph{weakly tight }if whenever $T_{0}$ and $T_{1}$ are
definitionally equivalent extensions of $T$, then $T_{0}$ and $T_{1}$
are the same theory in the sense that they have the same deductive
closure. 
\end{enumerate}
\end{defn}

(1) implies (2) and (2) implies (3). \footnote{See Remark 9 in \citep{EnayatLelykFOCat}.}
(1) is a technical, but very useful property. (2) and (3) seem to
tell us something about the ``stability'' of a theory. Bi-interpretability
and definitional equivalence are canonical equivalence relations that
give us plausible accounts of when two theories might considered intuitively
equivalent for some purposes. (2) and (3) then tell us that -- something
like -- any pair of intuitively equivalent theories are actually
identical. For some examples, we've already seen above that both $ZFC$
and $PA$ are internally categorical. It can be seen using very similar
arguments that they are also both solid. With regard to $MV$ we can
use what we already have above to show that it is weakly tight.
\begin{cor}
$MV$ is weakly tight; i.e., If $T$ and $S$ are definitionally equivalent
extensions of $MV$, then $T$ and $S$ are the same theory in the
sense that $T\dashv\vdash S$.\label{cor:weakTight}
\end{cor}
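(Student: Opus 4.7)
The plan is to combine the internal categoricity result (Theorem \ref{thm:MV*intCat}) with the fact that $MV\vdash MV^*$. Suppose $T$ and $S$ are definitionally equivalent extensions of $MV$. Following standard practice, I would rename the membership symbol and regard $T$ as a theory $T_0$ in $\mathcal{L}_{MV}(\in_0)$ and $S$ as a theory $S_1$ in $\mathcal{L}_{MV}(\in_1)$, fixing formulas $\delta_0(x,y;\in_1)$ and $\delta_1(x,y;\in_0)$ that witness the definitional equivalence together with the coherence identities saying that each defined copy of a membership relation agrees with the original.

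The first step is to form the combined theory $U = T_0 \cup S_1 \cup \{\delta_0,\delta_1\}$ in $\mathcal{L}_{MV}(\in_0,\in_1)$ and to show that every model of $U$ is in fact a model of $MV^*(\in_0,\in_1)$. Since $\in_1$ is explicitly defined from $\in_0$ via $\delta_1$ in $U$ (and symmetrically), every formula of $\mathcal{L}_{MV}(\in_0,\in_1)$ is provably equivalent in $U$ to a formula of $\mathcal{L}_{MV}(\in_0)$ alone. Consequently the mixed-language instances of the Separation, Collection, and World-domination schemas of $MV^*(\in_0,\in_1)$ reduce to pure $\in_0$-instances that already lie in $T_0\vdash MV^*$, and similarly on the $\in_1$-side.

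The second step is to observe that any model $\mathcal{M}$ of $T$ (resp.\ $S$) can be expanded to a model of $U$ by defining the missing membership symbol via $\delta_1$ (resp.\ $\delta_0$). Theorem \ref{thm:MV*intCat} then yields a definable isomorphism $\sigma$ on sets with a lift $\sigma_+$ to worlds between the $\in_0$- and $\in_1$-reducts of this expansion. Because $\sigma$ is an honest isomorphism, an $\mathcal{L}_{MV}$-sentence holds on one reduct iff it holds on the other.

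Finally, suppose $\varphi\in\mathcal{L}_{MV}$ is provable in $T$ and let $\mathcal{W}\models S$. Expanding $\mathcal{W}$ to a model of $U$ (taking $\in_{\mathcal{W}}$ as $\in_1$ and defining $\in_0$ by $\delta_0$), the $\in_0$-reduct satisfies $T$ and hence $\varphi$; the isomorphism then forces $\mathcal{W}$ itself to satisfy $\varphi$. The other direction is symmetric, yielding $T\dashv\vdash S$. The main obstacle I anticipate lies in the first step: one needs to carefully track parameters, world-sort quantifiers, and nested occurrences of $\in_0$ and $\in_1$ when translating mixed-language schema instances to pure ones, so that the internal-categoricity machinery of $MV^*(\in_0,\in_1)$ genuinely applies. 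Everything else is a routine transfer along a definable isomorphism.
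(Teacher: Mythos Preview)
Your proposal is correct and follows essentially the same route as the paper: form the merged theory in $\mathcal{L}_{MV}(\in_0,\in_1)$ via the definitional equivalence, observe that inter-definability of $\in_0$ and $\in_1$ collapses the mixed-language schema instances to pure ones so that the merged theory extends $MV^*(\in_0,\in_1)$, and then apply Theorem~\ref{thm:MV*intCat} to obtain a definable isomorphism and hence elementary equivalence of the reducts. The paper's write-up is slightly terser (it speaks of the common deductive closure $U$ of the definitional expansions $T^+$, $S^+$ and invokes conservativity), but the substance is the same, and your explicit flag that $MV\vdash MV^*$ is what makes the schema-reduction step go through is exactly the point.
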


\begin{proof}
Suppose that $T$ and $S$ are extensions of $MV$ that are definitionally
equivalent. To make things simpler, suppose that $T$ is articulated
using $\in_{0}$ and $S$ is articulated using $\in_{1}$. Then there
are definitional expansions $T^{+}$ and $S^{+}$ that add definitions
of $\in_{1}$ and $\in_{0}$ respectively to $T$ and $S$ such that
$T^{+}$ and $S^{+}$ have the same deductive closure.\footnote{Here we are using the characterization of definitional equivalence
that \citet{LefeverDefEqNonDisjLang} call \emph{definitional mergeability}.
While this is not always the same as definitional equivalence, it
is the same when the languages of the theories being compared are
disjoint. Hence, our move to $\in_{0}$ and $\in_{1}$.} Call the deductive closure of these theories $U$. Since we can define
$\in_{0}$ in terms of $\in_{1}$ and vice versa, this means that
$U$ extends $MV^{*}(\in_{0},\in_{1})$. Thus, by Theorem \ref{thm:MV*intCat},
there is a definable isomorphism between the multiverses associated
with $\in_{0}$ and $\in_{1}$. This means that in any model $\mathcal{A}$
of $U$, the reducts $\mathcal{A}\restriction\in_{0}$ and $\mathcal{A}\restriction\in_{1}$
of $\mathcal{A}$ to $\in_{0}$ and $\in_{1}$ respectively are elementary
equivalent. Thus, since $T^{+}$ and $S^{+}$ are conservative extensions
of $T$ and $S$, we see that they have the same theorems.
\end{proof}
In many interesting cases, tightness follows from weak-tightness.
For some examples, \citet{VisserFriedBitoSyn} have shown that for
a wide class of theories, including $ZFC$, bi-interpretability implies
definitional equivalence. More generally, \citet{EnayatLelykFOCat}
have shown that for sequential theories that eliminate imaginaries,
internal categoricity implies tightness.\footnote{See Proposition 50 in \citep{EnayatLelykFOCat}. See Section 4.4 of
\citep{Hodges} for a discussion of imaginary elimination. A sequential
theory is defined in \citep{VisserFriedBitoSyn}, however, one might
think of it as a particularly low bar for a foundational theory to
meet and one that every theory in this paper traverses.} $ZFC$ is such a theory. When a theory eliminates imaginaries, this
means (roughly) that it can deliver representatives for any definable
equivalence relation. In $ZFC$ this is easily achieved by using Scott's
trick by taking the set of elements of minimal rank from each cell
of the equivalence relation. This technique crucially relies on the
powerset axiom to deliver the relevant $V_{\alpha}$ to intersect.
A such, it is not available in $ZFC_{count}^{-}$. Nonetheless, it
is possible to show that $ZFC_{count}^{-}$ is solid as we now do.
The original proof of this claim achieves is goal by drawing it as
a corollary of the facts that: second order arithmetic is bi-interpretable
with $ZFC_{count}^{-}$; second order arithmetic is tight; and tightness
is preserved through bi-interpretations. The proof below is a little
more direct and also harks back to our proof of Theorem m\ref{thm:ZFCcount}.
Thus, it seems worthwhile to include it.
\begin{thm}
\citep{EnayatVisTheme} $ZFC_{count}^{-}$ is solid.\label{thm:ZFC-solid}
\end{thm}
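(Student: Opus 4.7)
The plan is to adapt the mutual-collapse argument from Theorem \ref{thm:ZFCcount} to the interpretability setting. I would fix models $\mathcal{A}, \mathcal{B}, \mathcal{C}$ of $ZFC_{count}^{-}$ with $\mathcal{A} \triangleright_{par} \mathcal{B} \triangleright_{par} \mathcal{C}$ together with a parameter-definable isomorphism $\sigma : \mathcal{A} \cong \mathcal{C}$, and work entirely inside $\mathcal{A}$, treating $\mathcal{B}$ and (by composition) $\mathcal{C}$ as parametrically definable classes carrying parametrically definable membership relations $\in_{\mathcal{B}}$ and $\in_{\mathcal{C}}$.

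First, I would use Set Induction in $\mathcal{A}$, applied to formulas that mention the interpreted $\in$-relations, to conclude that $\in_{\mathcal{B}}$ and $\in_{\mathcal{C}}$ are well-founded from $\mathcal{A}$'s external standpoint. This licenses Mostowski-style collapses $\pi_{\mathcal{B}} : \mathcal{B} \cong \mathcal{B}^{*}$ and $\pi_{\mathcal{C}} : \mathcal{C} \cong \mathcal{C}^{*}$, where $\mathcal{B}^{*}$ and $\mathcal{C}^{*}$ are transitive classes of $\mathcal{A}$ and both collapse maps are definable in $\mathcal{A}$. By the uniqueness of transitive collapses, the restriction of $\pi_{\mathcal{B}}$ to the copy of $\mathcal{C}$ sitting inside $\mathcal{B}$ must coincide with $\pi_{\mathcal{C}}$, which yields the inclusion chain $\mathcal{C}^{*} \subseteq \mathcal{B}^{*} \subseteq \mathcal{A}$.

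Next, I would compose the collapse with the given isomorphism to obtain a parametrically definable isomorphism $\tau := \pi_{\mathcal{C}} \circ \sigma : \mathcal{A} \cong \mathcal{C}^{*}$, where $\mathcal{C}^{*}$ is a transitive inner class of $\mathcal{A}$. The crucial step then mirrors the closing move of Theorem \ref{thm:ZFCcount}: since $\mathcal{C}^{*}$ is transitive and $\omega$ is absolute, $\tau$ fixes $\omega^{\mathcal{A}}$ pointwise, and extensionality then forces $\tau(x) = x$ for every $x \subseteq \omega$ in $\mathcal{A}$. Invoking ``every set is countable'' in $\mathcal{A}$, every element of $\mathcal{A}$ is coded by a subset of $\omega$, so $\mathcal{A} = \mathcal{C}^{*}$. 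Combined with $\mathcal{C}^{*} \subseteq \mathcal{B}^{*} \subseteq \mathcal{A}$ this collapses to $\mathcal{B}^{*} = \mathcal{A}$, and the isomorphism I want is $\pi_{\mathcal{B}}^{-1} : \mathcal{A} \cong \mathcal{B}$, which is parametrically definable in $\mathcal{A}$ because $\pi_{\mathcal{B}}$ is.

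I expect the main obstacle to be the bookkeeping around the two interpretations: verifying that ``$\mathcal{C}$ interpreted inside $\mathcal{B}$'' interacts cleanly with ``$\mathcal{C}$ interpreted (by composition) inside $\mathcal{A}$'', so that the two collapses really do commute with the interpretation maps and the inclusion $\mathcal{C}^{*} \subseteq \mathcal{B}^{*}$ holds on the nose rather than merely up to isomorphism. Care is also needed to check that the quotient implicit in each interpretation does not disturb the uniqueness-of-collapse step. Once this reconciliation is set up correctly, the coding-by-reals argument transfers directly from Theorem \ref{thm:ZFCcount} and carries out the substantive work.
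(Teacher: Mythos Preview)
Your overall shape is exactly the paper's: build a transitive sandwich $\mathcal{C}_2\subseteq\mathcal{B}_1\subseteq\mathcal{A}$, then use the ``everything is coded by a real'' argument to force $\mathcal{A}=\mathcal{C}_2$ and hence $\mathcal{A}=\mathcal{B}_1$. Two specific steps, however, don't go through as you wrote them, and they are precisely where solidity departs from the internal-categoricity proof.

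First, your justification for the inclusion $\mathcal{C}^{*}\subseteq\mathcal{B}^{*}$ is wrong. You claim that $\pi_{\mathcal{B}}$ restricted to the domain of $\mathcal{C}$ coincides with $\pi_{\mathcal{C}}$ by uniqueness of collapse, but $\pi_{\mathcal{B}}$ collapses with respect to $\in_{\mathcal{B}}$ while $\pi_{\mathcal{C}}$ collapses with respect to $\in_{\mathcal{C}}$; on an element $c$ of the domain of $\mathcal{C}$ these two relations have entirely different extensions, so there is nothing for uniqueness to bite on. The paper obtains the inclusion by a two-stage collapse: first collapse $(\mathcal{C},\in_{\mathcal{C}})$ \emph{inside} $\mathcal{B}$ via $\pi_0$ to an $\in_{\mathcal{B}}$-transitive class $\mathcal{C}_1$, and only then collapse $(\mathcal{B},\in_{\mathcal{B}})$ inside $\mathcal{A}$ via $\pi_1$. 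Since $\mathcal{C}_1$ already lives in the $\in_{\mathcal{B}}$-world, its image $\mathcal{C}_2=\pi_1[\mathcal{C}_1]$ is automatically $\in_{\mathcal{A}}$-transitive and contained in $\mathcal{B}_1$. (Your $\mathcal{C}^{*}$ does end up equal to this $\mathcal{C}_2$, by uniqueness applied \emph{correctly}, so your conclusion survives; it is the route that needs repair.)

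Second, you write the collapse maps as total isomorphisms, and appeal to Set Induction in $\mathcal{A}$ to get well-foundedness of $\in_{\mathcal{B}}$. But Set Induction in $\mathcal{A}$ concerns $\in_{\mathcal{A}}$, and in the solidity setting you do not have the shared schemata that made the collapses total in Theorem~\ref{thm:ZFCcount}: nothing guarantees that $\in_{\mathcal{C}}$ is set-like in $\mathcal{B}$ or that $\in_{\mathcal{B}}$ is set-like in $\mathcal{A}$. The paper explicitly treats $\pi_0$ and $\pi_1$ as \emph{possibly partial}, and the substantive work is to check that they are at least defined on subsets of $\omega$ (because the relevant transitive closures are visibly small), that $\pi_1\circ\pi_0\circ\sigma$ acts as the identity wherever it is defined, and that this already pushes every $x\in\mathcal{P}(\omega)^{\mathcal{A}}$ into $\mathcal{C}_2$. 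Only once $\mathcal{A}=\mathcal{B}_1$ is established does it follow, a posteriori, that $\pi_1$ was a total isomorphism.
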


\begin{proof}
Suppose that $\mathcal{A}$, $\mathcal{B}$ and $\mathcal{C}$ are
models of $ZFC_{count}^{-}$ such that $\mathcal{A}\triangleright_{par}\mathcal{B}\triangleright_{par}\mathcal{C}$
and there is some $\sigma:\mathcal{A}\cong\mathcal{C}$ that is definable
in $\mathcal{A}$ from parameters. We claim that $\mathcal{A}$ can
define with parameters some $\tau:\mathcal{A}\cong\mathcal{B}$. The
argument is similar to our proof that $ZFC_{count}^{-}$ is internally
categorical. Since $\mathcal{C}$ is definable in $\mathcal{B}$ we
can take the transitive collapse $\mathcal{C}_{1}$ of $\mathcal{B}$
in $\mathcal{C}$ via the possibly partial function $\pi_{0}:\mathcal{C}\rightharpoondown\mathcal{C}_{1}$.
At present, we don't know that $\mathcal{C}$ and $\mathcal{C}_{1}$
are isomorphic since we don't know that $\mathcal{C}$ is set-like
in $\mathcal{B}$. Similarly, we can take the transitive collapse
$\mathcal{B}_{1}$ of $\mathcal{B}$ in $\mathcal{A}$ by the possibly
partial function $\pi_{1}:\mathcal{B}\to\mathcal{B}_{1}$. Finally,
wee may also use $\pi_{1}$ to collapse $\mathcal{C}_{1}$ in $\mathcal{A}$
to give us $\mathcal{C}_{2}$. The following diagram summarizes the
situation.\begin{center}

\begin{tikzpicture}[node distance=1cm and 1cm]     
% Top row nodes     
\node (W) {$\mathcal{A}$};     
\node (U) [right=of W] {$\mathcal{B}$};     
\node (V) [right=of U] {$\mathcal{C}$};          

% Symbols between top row nodes - using explicit positioning     
\node (par1) [right=0.2cm of W] {$\triangleright_{\text{par}}$};     
\node (par2) [right=0.2cm of U] {$\triangleright_{\text{par}}$};          

% Second row nodes (repositioned)     
\node (U1) [below=of W] {$\mathcal{B}_1$};     
\node (V1) [below=of U] {$\mathcal{C}_1$};          

% Bottom row node (repositioned)     
\node (V2) [below=of U1] {$\mathcal{C}_2$};          

% Arrows     
\draw[->] (U) -- (U1) node[midway, right] {$\pi_1$};     
\draw[->] (V) -- (V1) node[midway, right] {$\pi_0$};     
\draw[->] (V1) -- (V2) node[midway, right] {$\pi_1$};

% Bending arrow from W to V     
\draw[->] (W) to [out=30, in=150] node[midway, above] {$\sigma$} (V);

% Rotated subset symbols     
\node [rotate=90] at ($(U1)!0.5!(W)$) {$\subseteq$};     
\node [rotate=90] at ($(V2)!0.5!(U1)$) {$\subseteq$};
\node [rotate=90] at ($(V1)!0.5!(U)$) {$\subseteq$};

\end{tikzpicture}

\end{center}

We claim that $\mathcal{A}=\mathcal{B}_{1}=\mathcal{C}_{2}$. To see
this, we show that it suffices -- as in the proof of Theorem \ref{thm:ZFCcount}
-- to observe that $\mathcal{A}$ and $\mathcal{C}_{2}$ have the
same sets of natural numbers. We already can see that $\mathcal{C}_{2}\subseteq\mathcal{A}$,
so suppose that $\mathcal{A}\models x\subseteq\omega$. Then we see
that $\mathcal{C}\models\sigma(x)\subseteq\omega$ since $\sigma(\omega^{\mathcal{A}})=\omega^{\mathcal{C}}$.
It is also clear that the transitive closure of $\{\sigma(x)\}$ in
$\mathcal{C}$ is set-like in $\mathcal{C}$ and so we have $\mathcal{B}\models\pi_{0}\circ\sigma(x)\subseteq\omega$.
By similar reasoning we see that $\mathcal{A}\models\pi_{1}\circ\pi_{0}\circ\sigma(x)\subseteq\omega$.
Moreover, it can be seen that $\pi_{1}\circ\pi_{0}\circ\sigma(z)=z$
for any $z$ where the left hand side is defined. Thus, $x=\pi_{1}\circ\pi_{0}\circ\sigma(x)\in\mathcal{C}_{2}$
as required. Now since $\mathcal{A}=\mathcal{B}_{1}$, we see that
$\pi_{1}:\mathcal{B}\cong\mathcal{B}_{1}=\mathcal{A}$ and so $\tau=\pi_{1}^{-1}$
is the isomorphism we've been looking for.
\end{proof}
As in Section \ref{sec:When--is}, we can then piggyback on this argument
to show that $MV$ is also solid.
\begin{thm}
$MV$ is solid. 
\end{thm}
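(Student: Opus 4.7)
The plan is to combine the solidity of $ZFC_{count}^-$ on the set part with a world-extension argument modeled on the proof of Theorem~\ref{thm:MV*intCat}. Fix models $\mathcal{A}, \mathcal{B}, \mathcal{C}$ of $MV$ with $\mathcal{A} \triangleright_{par} \mathcal{B} \triangleright_{par} \mathcal{C}$ and $\sigma : \mathcal{A} \cong \mathcal{C}$ parameter-definable in $\mathcal{A}$, and write $\mathcal{A}_s, \mathcal{B}_s, \mathcal{C}_s$ for the set-domain reducts. By Theorem~\ref{thm:MV|-ZFC_count} all three reducts satisfy $ZFC_{count}^-$, the interpretation chain restricts to $\mathcal{A}_s \triangleright_{par} \mathcal{B}_s \triangleright_{par} \mathcal{C}_s$, and $\sigma$ restricts to $\sigma_s : \mathcal{A}_s \cong \mathcal{C}_s$. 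Theorem~\ref{thm:ZFC-solid} then supplies a parameter-definable isomorphism $\tau_s : \mathcal{A}_s \cong \mathcal{B}_s$; inspecting that proof, $\tau_s$ factors as $\tau_s = \pi_0 \circ \sigma_s$, where $\pi_0$ is the transitive collapse (inside $\mathcal{B}$) of the copy of $\mathcal{C}$ interpreted in $\mathcal{B}$.

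I then extend $\tau_s$ to the world sort by $\tau_+(W) = \tau_s `` W$ for each $\in^{\mathcal{A}}$-world $W$. Using the factorization, $\tau_+(W) = \pi_0 `` \{x : x \in^{\mathcal{C}} \sigma(W)\}$, so $\tau_+(W)$ is parameter-definable in $\mathcal{B}$ once the $\mathcal{B}$-parameter naming $\sigma(W)$ under the interpretation $\mathcal{B} \triangleright_{par} \mathcal{C}$ is supplied. Since $MV \vdash MV^*$, World-domination is available in both $\mathcal{A}$ and $\mathcal{B}$, and I can then transcribe the world-domain step of Theorem~\ref{thm:MV*intCat}: World-domination in $\mathcal{B}$ gives an $\in^{\mathcal{B}}$-world $U \supseteq \tau_+(W)$; World-domination in $\mathcal{A}$, applied to the $\mathcal{A}$-definable inner model $\tau_s^{-1} `` U$, gives an $\in^{\mathcal{A}}$-world $W^{\dagger}$ with $U \subseteq \tau_+(W^{\dagger})$; and amalgamation inside $\mathcal{A}$ together with the intermediate submodel theorem, applied inside $\tau_+(W^{\dagger})$ to the chain $\tau_+(W) \subseteq U \subseteq \tau_+(W^{\dagger})$, forces $\tau_+(W)$ to be a generic refinement of $U$ and hence an $\in^{\mathcal{B}}$-world. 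Surjectivity of $\tau_+$ onto $\mathcal{B}$-worlds is handled symmetrically, whence $\tau = \tau_s \cup \tau_+$ is the required parameter-definable isomorphism $\mathcal{A} \cong \mathcal{B}$.

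The main obstacle I expect is the definability bookkeeping that legitimizes the two invocations of World-domination — in particular, confirming that $\tau_+(W)$ really is parameter-definable inside $\mathcal{B}$ (and not merely inside $\mathcal{A}$), so that $\mathcal{B}$'s World-domination applies. This is exactly what the factorization $\tau_s = \pi_0 \circ \sigma_s$ buys: $\pi_0$ is internally definable in $\mathcal{B}$ from the parameters of the $\mathcal{C}$-interpretation, and $\sigma(W)$ supplies a name inside $\mathcal{B}$ for the relevant element of the interpreted $\mathcal{C}$. The remainder of the argument is then a fairly mechanical translation of the $MV^*(\in_0,\in_1)$ proof to the present three-model single-membership setting, with the asymmetry between $\mathcal{A}$ and $\mathcal{B}$ mediated throughout by the interpretation $\mathcal{B} \triangleright_{par} \mathcal{C}$ composed with $\sigma^{-1}$.
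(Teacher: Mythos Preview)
Your proposal is correct and follows essentially the same route as the paper: reduce to the solidity of $ZFC_{count}^-$ on the set sort, then use World-domination twice and the intermediate-model theorem to handle the world sort. The only cosmetic difference is that the paper works with the collapsed copy $\mathcal{U}_1$ of $\mathcal{B}$ inside $\mathcal{A}$ and argues that $\mathcal{A}$ and $\mathcal{U}_1$ literally have the same worlds, whereas you push forward along $\tau_s$ and check that $\tau_+(W)$ is a $\mathcal{B}$-world; your explicit factorization $\tau_s = \pi_0 \circ \sigma_s$ to secure definability of $\tau_+(W)$ inside $\mathcal{B}$ makes visible a point the paper leaves implicit.
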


\begin{proof}
Suppose that $\mathcal{W}$, $\mathcal{U}$ and $\mathcal{V}$ are
models of $ZFC_{count}^{-}$ such that $\mathcal{W}\triangleright_{par}\mathcal{U}\triangleright_{par}\mathcal{V}$
and there is some $\sigma:\mathcal{W}\cong\mathcal{V}$ that is definable
in $\mathcal{W}$ from parameters. We claim that $\mathcal{W}$ can
define with parameters some $\tau:\mathcal{W}\cong\mathcal{U}$. As
in the previous proof we take collapses of $\mathcal{V}$ in $\mathcal{U}$
and $\mathcal{U}$ in $\mathcal{V}$ to give us models $\mathcal{V}_{1}$,
$\mathcal{U}_{1}$ and $\mathcal{V}_{2}$ as depicted in the following
diagram.\begin{center}

\begin{tikzpicture}[node distance=1cm and 1cm]     
% Top row nodes     
\node (W) {$\mathcal{W}$};     
\node (U) [right=of W] {$\mathcal{U}$};     
\node (V) [right=of U] {$\mathcal{V}$};          

% Symbols between top row nodes - using explicit positioning     
\node (par1) [right=0.2cm of W] {$\triangleright_{\text{par}}$};     
\node (par2) [right=0.2cm of U] {$\triangleright_{\text{par}}$};          

% Second row nodes (repositioned)     
\node (U1) [below=of W] {$\mathcal{U}_1$};     
\node (V1) [below=of U] {$\mathcal{V}_1$};          

% Bottom row node (repositioned)     
\node (V2) [below=of U1] {$\mathcal{V}_2$};          

% Arrows     
\draw[->] (U) -- (U1) node[midway, right] {$\pi_1$};     
\draw[->] (V) -- (V1) node[midway, right] {$\pi_0$};     
\draw[->] (V1) -- (V2) node[midway, right] {$\pi_1$};

% Bending arrow from W to V     
\draw[->] (W) to [out=30, in=150] node[midway, above] {$\sigma$} (V);

% Rotated subset symbols     
\node [rotate=90] at ($(U1)!0.5!(W)$) {$\subseteq$};     
\node [rotate=90] at ($(V2)!0.5!(U1)$) {$\subseteq$};
\node [rotate=90] at ($(V1)!0.5!(U)$) {$\subseteq$};

\end{tikzpicture}

\end{center}

Given that $MV$ implies $ZFC_{count}^{-}$, we see that $\pi_{1}:\langle\mathbb{D}^{\mathcal{U}},\in^{\mathcal{U}}\rangle\cong\langle\mathbb{D}^{\mathcal{W}},\in^{\mathcal{W}}\rangle$
is an isomorphism on the set domains of $\mathcal{U}$ and $\mathcal{W}$
that is definable in $\mathcal{U}$. To address the worlds, it will
suffices to show that $\mathcal{W}$ and $\mathcal{U}_{1}$ have the
same worlds. Our argument is similar to the one we used to prove Theorem
\ref{thm:MV*intCat}. First, suppose that $W$ is a $\mathcal{W}$-world.
Then since $\pi_{1}\circ\pi_{0}\circ\sigma$ is the identity function
on sets in $\mathcal{W}$, we see that $\pi_{1}\circ\pi_{0}\circ\sigma``W=W\in\mathcal{V}_{2}\subseteq\mathcal{U}_{1}$.
Thus by World-domination in $\mathcal{U}_{1}$, we may fix a $\mathcal{U}_{1}$-world
$U\supseteq W$. And then by World-domination in $\mathcal{W}$ we
may fix a $\mathcal{W}$-world $W^{*}\supseteq U$. Thus, we have
$W\subseteq U\subseteq W^{*}$ where $W^{*}$is a generic extension
of $W$ and so $W$ is a generic refinement of a $\mathcal{U}_{1}$-world
meaning that $W$ is also a $\mathcal{U}_{1}$-world. A similar argument
establishes that every $\mathcal{U}_{1}$ world is also a $\mathcal{W}$
world. 
\end{proof}
With this in hand, we then see (roughly) that equivalent extensions
of $MV$ (and $ZFC_{count}^{-}$) are, in fact, identical. 
\begin{cor}
$ZFC_{count}^{-}$ and $MV$ are both tight; i.e., any pair of theories
extending $MV$ (or $ZFC_{count}^{-}$) that are bi-interpretable
have the same deductive closure.
\end{cor}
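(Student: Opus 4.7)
The plan is to derive tightness directly from the solidity results just established, following the standard recipe (essentially Proposition 50 of \citep{EnayatLelykFOCat}). Since both $ZFC_{count}^{-}$ (Theorem \ref{thm:ZFC-solid}) and $MV$ (the previous theorem) are solid, it suffices to show once and for all that any solid theory $T$ is tight, and then apply this to each case in turn.

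So suppose $T$ is solid and $T_0, T_1$ are bi-interpretable extensions of $T$ in the shared language of $T$. Unwinding bi-interpretability, I would fix interpretations $I$ of $T_1$ in $T_0$ and $J$ of $T_0$ in $T_1$ such that both compositions $J \circ I$ and $I \circ J$ are definably isomorphic to the respective identities on models. Given an arbitrary $\mathcal{A} \models T_0$, the plan is to set $\mathcal{B} = I(\mathcal{A})$ and $\mathcal{C} = J(\mathcal{B})$. This yields a chain $\mathcal{A} \triangleright_{par} \mathcal{B} \triangleright_{par} \mathcal{C}$ with $\mathcal{B} \models T_1$, $\mathcal{C} \models T_0$, and a parametrically definable isomorphism $\sigma : \mathcal{A} \cong \mathcal{C}$. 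Since $T_0, T_1 \supseteq T$, all three structures satisfy $T$, so solidity produces a definable isomorphism $\tau : \mathcal{A} \cong \mathcal{B}$.

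Because $T_0$ and $T_1$ share a language, $\tau$ is an honest isomorphism of structures in that language, so $\mathcal{A}$ and $\mathcal{B}$ satisfy exactly the same sentences. As $\mathcal{B} \models T_1$, we conclude $\mathcal{A} \models T_1$. The symmetric argument starting from a model of $T_1$ gives the converse, so $T_0$ and $T_1$ have the same models and hence the same deductive closure.

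The substantive work is entirely carried by solidity; the only bookkeeping is recognizing that the compositions delivered by bi-interpretability assemble into the three-model chain that solidity requires. The one interpretive choice worth flagging is reading \emph{bi-interpretable extensions of $T$} to mean bi-interpretable theories both formulated in the language of $T$, so that the conclusion \emph{same deductive closure} is a meaningful comparison in the first place.
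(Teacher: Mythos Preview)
Your proposal is correct and follows the same route as the paper. The paper does not supply a proof for this corollary at all: it has already recorded (with a citation to Remark~9 of \citep{EnayatLelykFOCat}) that solidity implies tightness, and then simply reads off the corollary from the two solidity theorems just proven. You do the same thing, except that you unpack the solid $\Rightarrow$ tight implication explicitly rather than citing it; your three-model chain argument is exactly the standard one, and your flag about reading ``bi-interpretable extensions of $T$'' as theories in the language of $T$ is appropriate.
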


\section{Discussion\label{sec:Discussion}}

I think the results above are sufficiently surprising, and perhaps
confusing, that some discussion is warranted. However, I'll concede
at the get-go that while I hope to clarify matters, I don't have anything
definitive to say on the philosophy: just more questions. Nonetheless,
from a logical point of view I think these results are of obvious
interest and this justifies further investigation. Categoricity results
related to second order logic (very broadly construed) tend to hold
for theories whose models have an obvious ``inductive structure.''\footnote{I'm mentioning second order logic here to distinguish these results
from other forms of categoricity, like the categoricity of the theory
of dense linear orders over the space of countable models. } The natural examples thus come from arithmetic and set theory, where
we start from a basic object like $0$ or $\emptyset$ and then construct
the rest of the universe inductively from there. The key point is
that one level of the construction determines the next. But $MV$
is only partially like this. Since the set domain of a model of $MV$
satisfies $ZFC_{count}^{-}$, we have some inductive structure from
which something like a standard categoricity argument can be conducted.
But nothing like this comes along with the world domain. The worlds
are generated using generic extension (and refinement) through a process
that is not inductive. Given a particular world, it simply makes no
sense to talk about the next world. They are not ordered in an interesting,
well-founded manner. To prove the internal categoricity of $MV^{*}$
we take up a different strategy that relies on the ways in which the
worlds of the respective multiverse overlap each other when collapsed.
This makes the proof of the internal categoricity of $MV^{*}$ quite
different from other proofs of its kind. At present, I know of no
other example like it. It would be good to know if other examples
are out there. 

Let us now move onto the philosophy. I'll start by considering how
the internal categoricity result above might affect our interpretation
of similar claims. Then I'll consider some reasons we might have for
disregarding the significance of those considerations. 

In the introduction to this paper we observed that traditional, external
categoricity results for set theory rely on the use of full models
of second order $ZFC$, which presuppose that it makes sense to talk
of the ``true'' powerset operation in order to define those models.
Given that large chunk of the burden in the categoricity claim is
obtaining a unique powerset operation inside those models, we appear
to be presupposing what we set out to obtain. This has been a classic
reason to disregard the evidential value of these results when considering
questions about the determinateness of the set theory's subject matter.
However, internal categoricity makes no use of such second order resources.
The theorem and the proof are all articulated in first order logic,
within a theory $ZFC(\in_{0},\in_{1})$ that shares its membership
relations evenly amongst its axiom schemata. Moreover loosely following
\citet{ParsonNatNum}, we can justify the use of such a theory using
a linguistically motivated thought experiment that is closely related
to the way in which we use translation to interpret different extensions
of $ZFC$. The theorem then shows that the universes associated with
$\in_{0}$ and $\in_{1}$ are isomorphic and that this isomorphism
is visible within the context of the thought experiment since it is
definable. If we then consider any statement articulated in $\mathcal{L}_{\in}$,
like the continuum hypothesis, we see that both the $\in_{0}$-user
and the $\in_{1}$ -user must agree on its truth value (even if they
are not in a position to decide it). Thus, we seem to have some reason
to think that the continuum hypothesis has a determinate truth value
using an approach that is not exposed to circularity problems but
is based in a methodology common to the practice of set theorists.
Now this is hardly a watertight argument, but it does look like a
promising sketch. 

Unfortunately, I think the internal categoricity of $MV^{*}$ suggests
that this sketch cannot be filled in successfully. Or at least, any
effort to fill it in will need to explain what is happening with $MV^{*}$
and this will likely make the completed argument subtle and complex.
This seems unattractive as one of the draws of using categoricity
arguments for these purposes is the promise of extracting philosophy
straightforwardly from the math. While this kind of strategy rarely
works, when it does, the resulting claim is difficult to challenge
when it is closely related to something incontrovertible: a theorem.
But once the philosophy creeps in, the wiggle room quickly expands
and the opportunities to disagree multiply. 

So why can't the argument sketch be filled in? The essential idea
is that people using $MV$ have access to an analogous argument that
would appear to lead to a conclusion that contradicts the one drawn
above. This suggests that something is wrong with the argument sketch.
Let's try to step through this analogy patiently. We saw above that
$MV$ was introduced as an alternative to $ZFC$ that takes the incompleteness
wrought by forcing as a feature and not a bug. It was designed to
be an alternative foundation for mathematics that is more conservative
in the questions it permits us to ask. Regarding $ZFC$ and its canonical
extensions, Steel remarks, ``the mathematical theory based on large
cardinal hypotheses that we have produced to date can naturally be
expressed in the multiverse language'' \citep{SteelGP}. Moreover,
it has been shown that $ZFC$ and $MV$ are not only equiconsistent,
but there are interpretations witnessing this that display a degree
of invertibility akin to bi-intepretability, although somewhat weaker.\footnote{See Theorem 18 in \citep{Meadows2Args} and Proposition 6.1.1 in \citep{SteelOnMadMead}
for more detail. In particular, Meadows shows that $ZFC+V=HOD+GA$
is sententially equivalent to $MV+\exists W(V=HOD+GA)^{W}$. This
means that there are interpretations going back and forth between
these theories that take models of one theory and return a model that
is elementary equivalent to the original.} For the purposes of providing a foundation for mathematics and, indeed,
doing contemporary set theory $ZFC$ and $MV$ are functionally identical.
But there is, of course, a salient difference in their treatments
of the continuum hypothesis. While in the framework of $ZFC$, the
continuum hypothesis is naturally articulated as a simple statement
about a putative bijection between two sets, there is no corresponding
formulation in $MV$. From the local perspective, it is true in some
worlds and false in others. From the global perspective, it simply
cannot be articulated. This is good reason to think that $MV$ is
successful in satisfying the goal of providing a foundation for mathematics
that takes deep incompleteness seriously. 

Nonetheless, the users of $MV$ (like those of $ZFC$) might still
have some qualms about the the fixedness of the content of their new
theory. Perhaps $MV$ is an algebraic theory like group theory that
has many very different models. I do not want to make any claims about
whether this would affect $MV$'s ability to provide a foundation
for mathematics, but I think we can set that problem aside. Regardless
of how it is settled, it would still be interesting and informative
for users of $MV$ to understand whether it is algebraic or not. This
is a natural time for internal categoricity to enter our story. We
might follow something like the Parsonian thought experiment described
above for $ZFC$, with $MV^{*}$ in its place.\footnote{The reader will note that I'm moving quite freely between $MV$ and
$MV^{*}$ here. This will be discussed further below.} We might imagine two users of $MV^{*}$ who wonder whether they are
talking about the same thing. Following the template, we might then
move to a combined language with two membership relations $\in_{0}$
and $\in_{1}$. Regarding the theory used to govern those relations,
we might agree to use the combined language in each of the axiom schemata,
thus, giving us $MV^{*}(\in_{0},\in_{1})$. The results above then
tell us that there will be a definable isomorphism between the multiverse
associated with $\in_{0}$ and that of $\in_{1}$. This seems to provide
a helpful response to qualms about the fixedness of $MV$'s content.
For example, it is obvious that group theory cannot be axiomatized
in a way that is is internally categorical.\footnote{Perhaps the easiest way to see this is to observe that group theory
has, and is intended to have, distinct models with different cardinalities.} Thus, we see something about $MV$ that makes it more like $ZFC$
than group theory. So far so good. But can we say more? I think that
a more extensive analysis of the thought experiment sketched above
could provide us with further insights that would shed light on both
$MV$ and internal categoricity. Let us, however, leave that for future
work and return to our analogy. 

Earlier we saw our, arguably straw, $ZFC$ users take the solace drawn
from the internal categoricity of $ZFC$ and use that to sketch an
argument toward a bold conclusion about the determinateness of the
continuum hypothesis. Could incautious $MV$ users do something similar?
Given that we just have a coarse-grained sketch, I don't see why they
couldn't. The $MV$ users have a theory that is compatible with the
idea that the continuum hypothesis is making something like a category
error. Internal categoricity then appears to tell us something positive
about the semantic stability of the perspective they have adopted.
Any pair of $MV^{*}$ users who commit to the combined theory $MV^{*}(\in_{0},\in_{1})$
will also be committed to the continuum hypothesis being indeterminate
from both the local and global perspectives. At this point, I'm reluctant
to follow through on the analogy and say that this puts them in a
position to argue that the continuum hypothesis is therefore, indeterminate.
But I think this is a matter of hindsight since I now feel the same
way about the $ZFC$ users described above. Whatever conclusions can
be drawn from the internal categoricity of these theories, it cannot
tell us that the continuum hypothesis is both determinate and indeterminate.
Nonetheless, it might be put to work to answer a coherence challenge.
We might say that the internal categoricity of $MV$ does put us in
a position to sustain an attitude that continuum hypothesis is indeterminate.

With that little argument out of the way, let's close by considering
a couple of concerns one might have about this line of reasoning.
As we go, I'll also offer brief responses to each of them. For our
first worry, the reader may have observed that I moved quite freely
from talking about $MV^{*}$ to $MV$ in the previous paragraphs.
These axiomatizations have an important difference. $MV^{*}$ is internally
categorical, while $MV$ is not. Moreover, given that $MV$ is the
official axiomatization, we might wonder whether we should care about
what $MV^{*}$ and its peculiar properties. This might give us reason
to doubt the robustness of the analogy described above. Our response
is relatively straightforward. While $ZFC$ is internally categorical,
it also has an alternative axiomatization $ZFC^{*}$ that is not internally
categorical.\footnote{See Theorem 52 in \citep{EnayatLelykFOCat} for a proof of this.}
Thus, exactly the same problem emerges in the context of $ZFC$. So
which axiomatization is the right one? This is an interesting question
that has become more pressing in recent years.\footnote{See the conclusion of \citep{EnayatLelykFOCat} for some discussion
of this.} But it is not so easy to answer. In contrast to the convention adopted
in this paper, it is more common to regard a theory as the deductive
closure of some set of axioms rather than the axioms themselves. Moreover,
this accords well with the ordinary practice of set theorists. We
are generally just interested in what follows from $ZFC$ regardless
of how it is axiomatized. Evidence for this can be found in the fact
that different textbooks frequently give quite different axiomatizations
of $ZFC$. Some use Collection rather than Replacement. Some prefer
Foundation over Set Induction.\footnote{For another classic difference, Kunen delivers a Pairing Axiom by
saying that given any sets $x$ and $y$ there is a set $z$ that
contains $x$ and $y$, while Jech says more by saying that there
is a set $w$ that contains \emph{exactly }$x$ and $y$ \citep{KunenST,JechST}.
Of course, Separation and Extensionality wipe away any worries of
inequivalence, but the point is just that there are many distinct
axiomatizations of $ZFC$ out there and we rarely have need to worry
about this. I should also note that when we consider weakenings of
$ZFC$ then the question of axiomatization can become very important.
For example, it is well known that in the absence of Extensionality
or Powerset, Replacement behaves very oddly while Collection does
not \citep{ScottExtMore,GitmanZFC-}.} But none of this changes the theorems of $ZFC$, so the mathematical
practice of users of $ZFC$ remains generally unaffected. Perhaps
the right question to ask is not: is this axiomatization of theory
$T$ internally categorical; but rather, is there an axiomatization
of theory $T$ that is categorical?\footnote{Of course, another option would be to consign the internal categoricity
of both $ZFC$ and $MV^{*}$ to the flames. Perhaps the axiom sensitivity
of internal categoricity makes it insignificant. I think it would
be extremely premature to draw such a conclusion when so much remains
unknown. Moreover, we've seen, with Corollary \ref{cor:weakTight},
that despite the axiom sensitivity, the internal categoricity of $MV^{*}$
can still be used to draw significant conclusions about $MV$ even
when it is construed as a deductively closed theory.} If so, $MV^{*}$'s internal categoricity seems to be just a significant
as that of $ZFC$. If not, then much more work needs to be done to
understand when one axiomatization of a theory is preferable to another.
Cards on the table, I think this is a fertile problem to explore that
is likely to deliver significant foundational insights that could
be pertinent to our current problem. But at present, this is a field
in its infancy. 

For our second and final concern, let's consider the World-domination
schema. In $MV(\in_{0},\in_{1})$ both $\in_{0}$ and $\in_{1}$ are
permitted into both the $\in_{0}$ and $\in_{1}$ versions of the
World-domination schema. This is directly analogous to what we did
with $ZFC(\in_{0},\in_{1})$. However, in the $ZFC(\in_{0},\in_{1})$
case we also spent some time discussing why we should allow the both
membership relations into both versions of, say, the Replacement Schema.
We are yet to do this for World-domination. And there is an important
hitch. We motivated our use of the shared version of Replacement on
the grounds that it provided a natural approximation of the full second
order version of Replacement. No such motivation can be offered for
World-domination. To see this recall that World-domination says that
every \emph{definable} inner model is covered by a world. What would
it mean to say that this was an approximation of the full second order
version of World-domination? To answer this question, we need to know
what the full second order version of World-domination would look
like. The obvious suggestion is the following: every inner model is
covered by a world. With Replacement we moved from \emph{definable}
class functions to \emph{arbitrary} class functions; here we move
from \emph{definable} inner models to \emph{arbitrary} inner models.
But second order World-domination is inconsistent.\footnote{It is also worth bearing in mind that the other axiom schemata also
face similar problems. For example, if we were to treat the Replacement
Schemata relativized to worlds as an approximation of a second order
version this also leads to inconsistency. To see this suppose $W$
is a world and $W[c]$ is a generic extension of $W$ by a Cohen real
$c\subseteq\omega$. Now let $f:\omega\to\omega$ be the injection
that enumerates $c$. This is a(n improper) class function over $W$,
but a second order Replacement Axiom would tell us that $f``\omega=c\in W$,
which is impossible.} To see this consider, the models $\mathcal{W}_{0}$ and $\mathcal{W}_{1}$
described in the proof of Theorem \ref{thm:There-exist-models} (showing
that $MV$ is not internally categorical). We claim that $\mathcal{W}_{0}$
doesn't satisfy second order World-domination. To see this recall
that while $\mathcal{W}_{0}$ is generated from a world $M$, $\mathcal{W}_{1}$
is generated from a class generic $N$ extension of $M$ that no world
in $\mathcal{W}_{0}$ can cover. As such, we have in $N$ an uncovered
inner model of $\mathcal{W}_{0}$. So how do we motivate World-domination
and how does this affect the analogy above? Our response is to concede
that World-domination is not an approximation of a second order axiom,
but rather, it is about definability. Theorem \ref{thm:Goldberg3}
not only tells us that World-domination is a consequence of $MV$,
its proof also gives us some explanation as to why it is true and
some expectation of when it will continue to hold. In particular,
we see that World-domination reveals deep limitations in what one
can expect to define over a model of $MV$. In allowing both membership
relations into the World-domination Schemata, we can be understood
as merely flagging our expectation that these limitations will continue
to hold. Of course, this is not as simple as saying that we are approximating
a second order axiom, but this may be more of a blessing than a curse.
When we motivated shared Replacement with the approximation story,
we might wonder how far we've removed ourselves from second order
logic and our worries about circularity. When motivating World-domination
through definability rather than second order logic, there is no implicit
appeal to the collection of all subclasses of the domain. Perhaps
this is a benefit. I think it's fair to say that more work is required
here, and that this work will tell much more about internal categoricity
and foundational theories in mathematics. But even at just the beginning
of this road, it is interesting to see an axiom schema in a foundational
setting that cannot be explained as an approximation of a second order
cousin.

\subsection*{Conclusion}

In this paper, we've shown that Steel's multiverse theory can be axiomatized
in alternative manners such that it is both internally categorical
and not internally categorical. This makes Steel's theory an interesting
example to consider in the theory of relative interpretation. Given
that internal categoricity is sometimes thought to have significance
with regard to the determinateness of the continuum hypothesis and
that $MV$ is designed to, in a sense, ignore it, this also makes
the internal categoricity of (a version of) $MV$ interesting in the
philosophy and foundations of mathematics. We have argued that this
result suggests that arguments from internal categoricity to the fixedness
of subject matter in a theory are unlikely to work or, at least, will
be much more complicated than one might have expected.

\bibliographystyle{plainnat}

\end{document}